\newtheorem{theorem}{\indent Theorem}[section]
\newtheorem{corollary}{\indent Corollary}[section]
\newtheorem{proposition}{\indent Proposition}[section]
\newtheorem{definition}{\indent Definition}[section]
\newtheorem{lemma}{\indent Lemma}[section]
\newtheorem{remark}{\indent Remark}[section]
\numberwithin{equation}{section}
\date{}
\begin{document}

\begin{center}
{\Large \bf Positive solutions for the fractional  Schr\"{o}dinger equations with logarithmic and critical nonlinearities\footnote{This work is supported by NSF DMS-1804497
and Fundamental Research Grant for Central Universities 2018QNA35.} }\\
\vspace{0.5cm} {Haining Fan}\\ \vspace{6pt}
{\footnotesize {\em{ Department of Mathematics, China University of Mining and Technology, Xuzhou, Jiangsu 221116, China}}}

\vspace{0.2cm} {Zhaosheng Feng\footnote{Corresponding author: zhaosheng.feng@utrgv.edu; fax: (956) 665-5091.}}\\  \vspace{6pt}
{\footnotesize{\em{ School of Mathematical and Statistical Sciences, University of Texas,  Edinburg, Texas 78539, USA}}}

\vspace{0.2cm} {Xingjie Yan
}\\
\vspace{6pt}
{\footnotesize{\em{
Department of Mathematics, China University of Mining and Technology, Xuzhou, Jiangsu 221116, China
}}}

\end{center}

\begin{abstract}
In this paper, we study  a class of fractional  Schr\"{o}dinger equations involving logarithmic and critical nonlinearities on an unbounded domain, and show that such an equation with positive or sign-changing weight potentials  admits at least one positive ground state solution and the associated energy is positive (or negative).  By applying the Nehari manifold method and Ljusternik-Schnirelmann category, we deeply investigate how the weight potential affects the multiplicity of positive solutions, and obtain the relationship between the number of positive solutions and the category of some sets related to the weight potential.

\textbf{Keywords}: Positive solutions; Fractional Schr\"{o}dinger equations; Logarithmic nonlinearity; Critical Sobolev exponent; Nehari manifold.

\textbf{2000 MSC:} 35A15; 35B09; 35B33; 35J60.
\end{abstract}

\section{Introduction}\noindent

The aim of this article is to study how the weight potential affects the existence of ground state solutions and the number of positive  solutions of the fractional  Schr\"{o}dinger equation:
\begin{equation}\label{e1.1}
(-\Delta)^\alpha u+u=\lambda a(x)u\ln|u|+b(x)|u|^{2_\alpha^*-2}u,\ \, x\in\mathbb{R}^N,
\end{equation}
where $\alpha\in(0,1)$, $\lambda>0$,  $N>4\alpha$, $a(x)$ and $b(x)$ are continuous and bounded weight potentials, and $2_\alpha^*=\frac{2N}{N-2\alpha}$ is the fractional critical Sobolev exponent. Let $\wp(\mathbb{R}^N)$ denote the Schwartz space of rapidly decaying $\mathcal{C}^\infty$ functions in $\mathbb{R}^N$. The operator $(-\Delta)^\alpha$ is the  fractional Laplacian  defined by the Riesz potential \cite{1}:

$$(-\Delta)^{\alpha}u(x)=-\frac{C(N,\alpha)}{2}\int_{\mathbb{R}^N}\frac{u(x+y)+u(x-y)-2u(x)}{|y|^{N+2\alpha}}dy,\ \, x\in\mathbb{R}^N,\ u\in\wp(\mathbb{R}^N)$$
where $$C(N,\alpha)=\left(\int_{\mathbb{R}^N}\frac{1-cos\xi_1}{|\xi|^{N+2\alpha}}d\xi\right)^{-1},\ \, \xi=(\xi_1,\xi_2,...,\xi_N).$$
For the definition of the fractional Laplacian $(-\Delta)^\alpha$ and the fractional Sobolev spaces, we refer the reader to Nezza-Palatucci-Valdinoci \cite{1}.

Recall the classical  Schr\"{o}dinger elliptic equation:
\begin{equation}\label{ss1.2}
-\Delta u+V(x)u= f(x,u)\ \text{in} \ \ \mathbb{R}^N,
\end{equation}
where $f(x,u)$ is a polynomial-type nonlinearity, such as  $f(x,u)=a(x)|u|^{q-2}u+b(x)|u|^{p-2}u$ with $2<q<p\leq2^*=\frac{2N}{N-2}$.
It is well-known that the existence of ground state solutions (least energy solutions) and the number of positive solutions of (\ref{ss1.2}) are affected by the weight potential. For example, when $f(x,u)=\lambda|u|^{q-2}u+|u|^{2^*-2}u$, Brezis-Nirenberg  \cite{5} obtained a positive solution of (\ref{ss1.2}) in a bounded domain for $\lambda\in(0,\lambda_1)$, where $\lambda_1$ is the first eigenvalue of $-\Delta$ with the Dirichlet boundary condition. Later on, when $f(x,u)=a(x)|u|^{q-2}u+|u|^{2^*-2}u$, Brezis \cite{44} studied how the weight potential $a(x)$ affects the number of solutions of (\ref{ss1.2}). For more results related to (\ref{ss1.2}), we refer the reader to \cite{4,3,2} for the subcritical growth and \cite{7,8,6} for the critical case.

Various nonlinearities have a rather diverse group of applications in scientific fields \cite{32,feng1}. For example, logarithmic nonlinearity appears frequently in partial differential equations which are widely applied to quantum mechanics, reaction-diffusion phenomena, nuclear physics,  quantum optics, theory of superfluidity and Bose-Einstein condensation \cite{22}. In particular, for the Schr\"{o}dinger equation with a logarithmic nonlinearity:
\begin{equation}\label{ss1.3}
-\Delta u+V(x)u= a(x)u\ln u^2 \ \ \text{in} \ \mathbb{R}^N,
\end{equation}
where $V(x)$ and $a(x)$ are periodic weight potentials, Squassina-Szulkin \cite{23} studied (\ref{ss1.3}) in $H^1({\Bbb R}^N)$ to establish the existence of infinitely many geometrically distinct solutions. Shuai \cite{39} proved the
existence of positive and sign-changing solutions in $H^1({\Bbb R}^N)$ by using the direction derivative and constrained minimization method. Tanaka-Zhang \cite{29} considered a spatially periodic logarithmic Schr\"{o}dinger equation
and showed that there exist infinitely many multi-bump solutions that are distinct under a $\mathbb{}Z^N$-action. For more results related to (\ref{ss1.3}), we refer the reader to \cite{41,40,43,42} and the references therein.

In recent years, much attention has been focused on studying the problems involving the fractional Laplacian from both mathematical and application points of view \cite{10,14,45,11,12,15, 37,13}.
Laskin \cite{11,12} found a fractional generalization of the Schr\"{o}dinger equation for the wave
function in quantum mechanical systems by considering the L\'{e}vy flights instead of the Brownian motion in the Feynman path integral approaches:
\begin{equation}\label{e1.2}
(-\Delta)^\alpha u+u=f(x,u),~ x\in\mathbb{R}^N.
\end{equation}
By considering different expressions of nonlinearity $f$, quite many profound results have been established on the existence and multiplicity of positive solutions.
For example, motivated by Brezis-Nirenberg  \cite{5}, Servadei-Valdinoci \cite{37} considered the model:
 \begin{equation}\label{f1.1}
\left\{\begin{array}{ll}
(-\Delta)^\alpha u=\lambda u+|u|^{2_\alpha^*-2}u,~& \text{in}~\Omega,\\
u=0,~~&\text{in}~\mathbb{R}^N\backslash\Omega,
\end{array}\right.
\end{equation}
and obtained an extended version of the classical Brezis-Nirenberg result to the case of non-local
fractional operators through variational techniques. For $f(x,u)=a(x)|u|^{q-2}u+|u|^{2_\alpha^*-2}u$ with $0<q<2_\alpha^*-1$ in (\ref{e1.2}), Dipierro-Medina-Peral-Valdinoci \cite{45} presented the existence of solutions by using the Lyapunov-Schmidt reduction method. Moreover, for $0<q<1$, under a new functional setting, a fractional elliptic regularity theory was developed too. For more results related to (\ref{e1.2}), we refer to \cite{18,21,38, 17,20,19,16}. However, most of these results assume that $f$ is of polynomial-type.

There is a natural and interesting question here: if the nonlinearity of the fractional Schr\"{o}dinger equation  contains both logarithmic and critical terms like (\ref{e1.1}),  how about the existence and multiplicity of positive solutions for equation (\ref{e1.1})? This is certainly not a trivial problem, because the logarithmic nonlinearity does not satisfy the monotonicity condition (or Ambrosetti-Rabinowitz condition) and this type of nonlinearity may change sign in $\mathbb{R}^N$. On the other hand, the appearance of logarithmic and critical nonlinearity makes it more difficult for us to prove that the resultant (PS) sequence is convergent, and the nonlocal properties of fractional Laplacian operator also cause great difficulties for multiplicity of positive solutions. To the best of our knowledge,  very little has been undertaken on the fractional Schr\"{o}dinger equations involving both logarithmic and critical nonlinearities.

In the present article, we shall study the existence of ground state solutions of equation (\ref{e1.1}) with positive, or negative, or sign-changing weight potentials, and show how the weight potential affects the number of positive solutions.

Before stating our main results, we introduce some assumptions on $a(x)$ and $b(x)$:
\begin{description}
 \item[$(H_1)$] $\displaystyle\lim_{|x|\rightarrow\infty}a(x)=0$,\, $x\in \mathbb{R}^N$.
 \item[$(H_2)$]There exist a compact set $M=\{z\in\mathbb{R}^N;\, b(z)=\displaystyle\max_{x\in\mathbb{R}^N}b(x)=1\}$ and a positive number $\rho>N$ such that $b(z)-b(x)=O(|x-z|^\rho)$ as $x\rightarrow z$ uniformly in $z\in M$.
  \item[$(H_3)$]  $a(x)>0$,\, $x\in M$.
\end{description}
\begin{remark}\label{r1.1}
Let $M_r=\{x\in \mathbb{R}^N;\, dist(x,M)<r\}$ for $r>0$. Then by $(H_2)-(H_3)$ there exist  $C_0, r_0>0$ such that
\begin{equation*}
  a(x)>0,\ \, x\in M_{r_0}\subset\mathbb{R}^N
\end{equation*}
and
\begin{equation*}
  b(z)-b(x)\leq C_0|x-z|^\rho,\ \,  x\in B_{r_0}(z),
\end{equation*}
uniformly in $z\in M$, where $B_{r_0}(z)=\{x\in \mathbb{R}^N;\, |x-z| <r_0\}$.
\end{remark}
\begin{remark}\label{r1.2}
Define
\begin{equation*}
  b_\infty:=\displaystyle\limsup_{|x|\rightarrow\infty}b(x).
\end{equation*}
Then $ b_\infty<1.$
\end{remark}

\begin{theorem}\label{t1.1}
Assume that condition $(H_1)$ holds and $a(x)$ is negative or sign-changing. Then there exists $\Lambda_1>0$ such that if $\lambda\in(0,\Lambda_1)$, equation (\ref{e1.1}) has a positive ground state solution and the ground energy of (\ref{e1.1}) is negative.
\end{theorem}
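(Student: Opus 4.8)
The plan is to use the Nehari manifold method. First I would introduce the energy functional associated with \eqref{e1.1},
\[
I_\lambda(u)=\frac{1}{2}\|u\|^2-\frac{\lambda}{2}\int_{\R^N}a(x)u^2\ln|u|\,dx+\frac{\lambda}{4}\int_{\R^N}a(x)u^2\,dx-\frac{1}{2_\alpha^*}\int_{\R^N}b(x)|u|^{2_\alpha^*}\,dx,
\]
defined on the fractional Sobolev space $H^\alpha(\R^N)$ with $\|u\|^2=\iint\frac{|u(x)-u(y)|^2}{|x-y|^{N+2\alpha}}\,dx\,dy+\int u^2$, after integrating the logarithmic term by parts so that it becomes differentiable. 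I would record that $I_\lambda\in\mc C^1$ (the logarithmic term is controlled using $|s^2\ln|s||\le C_\va(|s|^{2-\va}+|s|^{2+\va})$ for any small $\va>0$, together with the Sobolev embedding, valid since $N>4\alpha$ guarantees room between the exponents). Then I would define the Nehari manifold $\mc N_\lambda=\{u\in H^\alpha(\R^N)\setminus\{0\}:\langle I_\lambda'(u),u\rangle=0\}$ and set $c_\lambda=\inf_{\mc N_\lambda}I_\lambda$.

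Next I would establish the geometry. Because $a(x)$ is negative or sign-changing and satisfies $(H_1)$, the "bad" part of the logarithmic term (where $a>0$) is small, and I would show that for $\lambda$ in some interval $(0,\Lambda_1)$ the fibering map $t\mapsto I_\lambda(tu)$ has a unique positive critical point which is a strict maximum, so $\mc N_\lambda$ is a natural constraint and every minimizing sequence can be taken nonnegative (replace $u$ by $|u|$). The key quantitative point is that $c_\lambda<0$: testing with any fixed $u$ whose support meets $\{a<0\}$, the term $-\frac{\lambda}{2}\int a\,u^2\ln|u|$ becomes a negative contribution for suitably scaled $u$, pushing the Nehari level strictly below zero; this is where the sign condition on $a$ is used decisively, and it is the main structural obstacle — I must carefully choose the test function and scaling so that the logarithmic term dominates the positive quadratic and critical terms near the origin. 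I would also verify coercivity of $I_\lambda$ on $\mc N_\lambda$ and boundedness of minimizing sequences.

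Then I would run a (PS)${}_{c_\lambda}$ argument. Take a minimizing sequence $\{u_n\}\subset\mc N_\lambda$; by Ekeland's variational principle it may be assumed to be a (PS)${}_{c_\lambda}$ sequence, bounded, hence $u_n\rightharpoonup u$ weakly in $H^\alpha(\R^N)$ and a.e. The hard part here, and the real technical heart of the proof, is ruling out loss of compactness: since the level is \emph{negative} while the critical problem at infinity and the limiting "problem at infinity" (with $a\equiv0$ by $(H_1)$, so no logarithmic term) have nonnegative energy levels — indeed the Brezis–Nirenberg-type threshold $\frac{\alpha}{N}S_\alpha^{N/2\alpha}>0$ for the critical part and $0$ for any vanishing/concentration at infinity — a concentration-compactness / Brezis–Lieb splitting shows that if $u_n\not\to u$ strongly then $c_\lambda$ would have to be the sum of $I_\lambda(u)$ (which is $\ge c_\lambda$ or $\ge0$) and a nonnegative quantity, forcing $c_\lambda\ge0$, a contradiction. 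I would make this precise via the Brezis–Lieb lemma for the critical term, the weak continuity of the logarithmic term under $(H_1)$ (vanishing at infinity kills any dichotomy mass there), and the fact that the local critical concentration costs at least $\frac{\alpha}{N}S_\alpha^{N/2\alpha}>|c_\lambda|$ once $\Lambda_1$ is chosen small enough so that $|c_\lambda|$ is correspondingly small. Hence $u_n\to u$ strongly, $u\in\mc N_\lambda$, $I_\lambda(u)=c_\lambda<0$, and $u\ge0$, $u\not\equiv0$.

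Finally I would upgrade $u$ to a genuine positive ground state: $I_\lambda'(u)=0$ (Lagrange multiplier argument on the Nehari constraint, using that $0$ is not attained and the fibering map has nondegenerate maximum), so $u$ is a nonnegative weak solution; the strong maximum principle for $(-\Delta)^\alpha+\mathrm{id}$ (applicable since the right-hand side, though containing $u\ln|u|$, is of the form $c(x)u$ with $c\in L^\infty_{loc}$ after the solution is shown bounded via a Moser/De Giorgi iteration adapted to the fractional setting) then gives $u>0$ in $\R^N$. Ground-state minimality among \emph{all} nontrivial solutions is immediate because every nontrivial solution lies on $\mc N_\lambda$. The one genuinely delicate estimate throughout is the interplay between the non-monotone, sign-changing logarithmic nonlinearity and the critical term in the compactness step; everything else is, modulo the fractional technicalities, standard Nehari-manifold bookkeeping.
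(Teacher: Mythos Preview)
Your overall architecture (minimise on the Nehari set, show the level is negative, then recover compactness because the critical threshold $\tfrac{\alpha}{N}S^{N/2\alpha}$ is positive) matches the paper's, and your treatment of the compactness step via Brezis--Lieb plus the vanishing of the logarithmic term at infinity under $(H_1)$ is essentially what the paper does. However, there is a genuine structural error in your description of the fibering maps, and it matters.

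You assert that for $\lambda\in(0,\Lambda_1)$ the map $t\mapsto I_\lambda(tu)$ has a \emph{unique} positive critical point which is a strict maximum. This is false precisely for the functions that make the theorem work. If $\int_{\R^N}a(x)|u|^2\,dx<0$ (and such $u$ exist because $a$ is negative or sign-changing), then
\[
\frac{\phi_u'(t)}{t}=\|u\|^2-\lambda\int a|u|^2\ln|u|-\lambda\ln t\int a|u|^2-t^{2_\alpha^*-2}\int b|u|^{2_\alpha^*}
\]
tends to $-\infty$ as $t\to0^+$ (the $-\lambda\ln t\int a|u|^2$ term dominates with the wrong sign), so $\phi_u$ is \emph{decreasing} near $0$. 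The correct picture, which the paper establishes in Lemma~\ref{l2.1}(ii), is that there are two critical points $0<t^+(u)<t^-(u)$: a local \emph{minimum} at $t^+$ and a local maximum at $t^-$. The Nehari set therefore splits as $N_\lambda=N_\lambda^+\cup N_\lambda^-\cup N_\lambda^0$, and the ground state lives on $N_\lambda^+$ (local minima of fibering maps), not on the ``mountain-pass'' piece $N_\lambda^-$. The paper shows $N_\lambda^0=\emptyset$ for small $\lambda$ (Lemma~\ref{l2.3}), then proves directly that $\alpha_\lambda^+:=\inf_{N_\lambda^+}I_\lambda<0$ and $\alpha_\lambda^+\le\alpha_\lambda^-$ (Lemma~\ref{l2.5}(i)), and finally minimises over $N_\lambda^+$.

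This means your ``Lagrange multiplier argument on the Nehari constraint, using that the fibering map has nondegenerate maximum'' does not apply as written; the minimiser sits at a nondegenerate \emph{minimum} of its own fibering map. Your test-function argument for negativity of the level (``scale $u$ so the logarithmic term dominates'') is also not how the paper proceeds: once one knows $\omega\in N_\lambda^+$, the identity $I_\lambda(\omega)=\tfrac{\lambda}{4}\int a|\omega|^2+\bigl(\tfrac12-\tfrac{1}{2_\alpha^*}\bigr)\int b|\omega|^{2_\alpha^*}$ combined with the defining inequality $\lambda\int a|\omega|^2<(2-2_\alpha^*)\int b|\omega|^{2_\alpha^*}$ gives $I_\lambda(\omega)<0$ by pure algebra. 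If you repair the fibering-map geometry by introducing the $N_\lambda^\pm$ decomposition, the rest of your outline (boundedness of minimising sequences, Brezis--Lieb, strong convergence because the level is below $\alpha_\lambda^++\tfrac{\alpha}{N}S^{N/2\alpha}$, maximum principle) goes through and coincides with the paper's proof.
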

\begin{theorem}\label{t1.2}
Assume that conditions $(H_1)-(H_3)$ hold and $a(x)\geq0$. Then there exists $\Lambda_2>0$ such that if $\lambda\in(0,\Lambda_2)$, equation (\ref{e1.1}) has a positive ground state solution and the ground energy of (\ref{e1.1}) is positive.
\end{theorem}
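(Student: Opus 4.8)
The plan is to combine the Nehari manifold method with a fractional concentration-compactness analysis. We work in $H^\al(\R^N)$ with $\|u\|^2=\int_{\R^N}|(-\Delta)^{\al/2}u|^2\,dx+\int_{\R^N}u^2\,dx$ and with the energy functional
\begin{equation*}
I_\lm(u)=\frac12\|u\|^2-\frac{\lm}{2}\int_{\R^N}a(x)u^2\ln|u|\,dx+\frac{\lm}{4}\int_{\R^N}a(x)u^2\,dx-\frac{1}{2_\al^*}\int_{\R^N}b(x)|u|^{2_\al^*}\,dx,
\end{equation*}
whose critical points are the weak solutions of $(\ref{e1.1})$. Using the elementary bound $\big|t^2\ln|t|\big|\le C(|t|^{2-\sg}+|t|^{2+\sg})$ for a small $\sg>0$, the Sobolev embeddings $H^\al(\R^N)\hookrightarrow L^q(\R^N)$ for $q\in[2,2_\al^*]$, and $(H_1)$, one checks that $I_\lm\in C^1(H^\al(\R^N),\R)$ and that the logarithmic term is a lower-order perturbation. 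Introduce the Nehari manifold $\mc N_\lm=\{u\in H^\al(\R^N)\setminus\{0\}:\langle I_\lm'(u),u\rangle=0\}$. Analysing the fibering map $\vp_u(t)=I_\lm(tu)$, $t>0$, I would show that for $\lm$ small every $u$ with $\int a(x)u^2\,dx>0$ and $\int b(x)|u|^{2_\al^*}\,dx>0$ has a unique $t_u>0$ with $t_uu\in\mc N_\lm$ and $\vp_u(t_u)=\max_{t>0}\vp_u(t)$; that $\mc N_\lm$ is bounded away from $0$ in norm; and that $c_\lm:=\inf_{\mc N_\lm}I_\lm=\inf_{u\neq0}\max_{t>0}I_\lm(tu)>0$. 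The hypotheses $a\ge0$ and $(H_3)$ are what force $c_\lm>0$ here, in contrast with Theorem \ref{t1.1}, where $a$ negative or sign-changing makes the level dip below $0$.

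Next I would fix the compactness threshold. Let $S$ be the best constant in the fractional Sobolev inequality and put $c^*:=\frac{\al}{N}S^{N/(2\al)}$; since $\max_{x\in\R^N}b(x)=1$ by $(H_2)$, this is the critical level. Using a fractional Brezis--Lieb lemma together with the splitting of Palais--Smale sequences for $(-\Delta)^\al$, and invoking $(H_1)$ and Remark \ref{r1.2} (so that the limiting equation at infinity $(-\Delta)^\al u+u=b_\infty|u|^{2_\al^*-2}u$, with $b_\infty<1$, has ground energy strictly above $c^*$), I would prove that every $(PS)_c$ sequence of $I_\lm$ with $c<c^*$ has a strongly convergent subsequence. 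The logarithmic term passes to the limit along any weakly and a.e.\ convergent sequence by the growth bound above and Vitali's theorem, so it does not affect the compactness level.

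The crux is the strict inequality $c_\lm<c^*$. I would estimate $\max_{t>0}I_\lm(t\,\cdot\,)$ on the (truncated) fractional Aubin--Talenti bubbles $U_\va$ rescaled to concentrate at a point $z_0\in M$, where $b(z_0)=1$ and, by $(H_3)$, $a(z_0)>0$. The known expansions give $\int|(-\Delta)^{\al/2}U_\va|^2\,dx=S^{N/(2\al)}+O(\va^{N-2\al})$ and $\int|U_\va|^{2_\al^*}\,dx=S^{N/(2\al)}+O(\va^{N})$, while $(H_2)$ with $\rho>N$ forces $\int_{\R^N}(1-b(x))|U_\va|^{2_\al^*}\,dx=O(\va^{N})$; since $N>4\al$ the profile satisfies $U\in L^2$ and $\int U_\va^2\,dx=K\va^{2\al}+o(\va^{2\al})$ for some $K>0$ (this is the only place $N>4\al$ is used). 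The decisive term is logarithmic:
\begin{equation*}
\int_{\R^N}a(x)U_\va^2\ln|U_\va|\,dx=\frac{N-2\al}{2}\,a(z_0)\|U\|_{L^2}^2\,\va^{2\al}\ln\frac1\va+O(\va^{2\al}),
\end{equation*}
so that in $I_\lm(tU_\va)$ it contributes a negative term of order $-\lm\,\va^{2\al}\ln\frac1\va$, which dominates the positive error $O(\va^{2\al})$ from $\int U_\va^2$ and the $O(\va^{N-2\al})+O(\va^N)$ errors once $\va$ is small. Optimising over $t>0$ and then over $\va$ yields $c_\lm\le\sup_{t>0}I_\lm(tU_\va)<c^*$. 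I expect this estimate --- keeping the $\va^{2\al}\ln\frac1\va$ gain strictly above all the remaining powers of $\va$, uniformly for $z_0\in M$, and controlling the near-unit maximiser $t_\va$ --- to be the main obstacle.

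Finally, applying Ekeland's variational principle to $I_\lm$ restricted to the $C^1$-manifold $\mc N_\lm$ produces a minimizing sequence that is also a $(PS)_{c_\lm}$ sequence for $I_\lm$; by the two previous steps it converges strongly, up to a subsequence, to some $u_0\in\mc N_\lm$ with $I_\lm(u_0)=c_\lm>0$. Since $\mc N_\lm$ is a natural constraint (non-degenerate for $\lm$ small), $u_0$ is a nontrivial critical point of $I_\lm$, that is, a weak solution of $(\ref{e1.1})$. Replacing $u_0$ by $|u_0|$ --- which does not increase $I_\lm$ after reprojecting onto $\mc N_\lm$, since the right-hand side of $(\ref{e1.1})$ depends on $u_0$ only through $|u_0|$ --- we may assume $u_0\ge0$, $u_0\not\equiv0$; the strong maximum principle for $(-\Delta)^\al$ then gives $u_0>0$ in $\R^N$. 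Hence $u_0$ is a positive ground state solution of $(\ref{e1.1})$ with positive ground energy $c_\lm$, which proves Theorem \ref{t1.2}.
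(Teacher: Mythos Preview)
Your proposal is correct and follows essentially the same strategy as the paper: Nehari manifold analysis showing the infimum is positive when $a\ge 0$; a Palais--Smale condition below the critical level $c^*=\frac{\al}{N}S^{N/(2\al)}$; the strict inequality $c_\lm<c^*$ obtained by testing with truncated Aubin--Talenti bubbles centred at $z_0\in M$ and exploiting the logarithmic gain $-C\lm\,\va^{2\al}\ln\frac{1}{\va}$ (this is exactly the paper's Lemma~\ref{l3.3}(i)); and Ekeland's principle followed by the maximum principle.

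Two minor differences worth noting. First, the paper works throughout in the Caffarelli--Silvestre extension space $X^\al(\R_+^{N+1})$ rather than directly in $H^\al(\R^N)$; this is only a matter of framework and does not change the argument. Second, your compactness step invokes the limiting problem at infinity with $b_\infty<1$, whereas the paper (Lemma~\ref{l3.1}(ii)) uses a more direct route: after Brezis--Lieb, from $\|\Psi_n\|_X^2\to l$ and $\int b|\Psi_n|^{2_\al^*}\to l$ together with $b\le 1$ and the Sobolev inequality one gets $l\ge S\,l^{2/2_\al^*}$, hence $l\ge S^{N/(2\al)}$, which already contradicts $c<c^*$. Your approach via global compactness would also work, but the paper's argument is shorter and avoids any discussion of the equation at infinity.
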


For the  definitions of the ground state solution and ground energy, we will present them in Section 2. The following results are regarding the relationship between the number of positive solutions and the weight potentials $a(x)$ and $b(x)$.

\begin{theorem}\label{t1.3}
Assume that conditions $(H_1)-(H_3)$ hold and  $a(x)$ is  sign-changing. Then for each $\delta<r_0$, there exists $\Lambda_\delta>0$ such that if $\lambda\in(0,\Lambda_\delta)$, equation (\ref{e1.1}) has at least  $cat_{M_\delta}(M)+1$ distinct positive solutions, where $cat$ means the Ljusternik-Schnirelmann category (See \cite{32}).
\end{theorem}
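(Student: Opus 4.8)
My plan is to combine a Benci--Cerami style Ljusternik--Schnirelmann argument, which produces $cat_{M_\delta}(M)$ positive solutions of \eqref{e1.1} with \emph{positive} energy lying strictly below the first noncompactness level, with the negative-energy ground state supplied by Theorem \ref{t1.1} (which applies here, since $(H_1)$ holds and $a$ is sign-changing); because the two families of solutions have energies of opposite sign they are automatically distinct, so the total count is at least $cat_{M_\delta}(M)+1$. Throughout, $I_\lambda$ denotes the energy functional associated with \eqref{e1.1} on $H^\alpha(\mathbb{R}^N)$, $\mathcal{N}_\lambda$ its Nehari manifold, and $c_\infty:=\tfrac{\alpha}{N}S^{N/(2\alpha)}$, where $S$ is the best constant in the fractional Sobolev inequality; $c_\infty$ is the energy of one Aubin--Talenti bubble for $(-\Delta)^\alpha u=|u|^{2_\alpha^*-2}u$ and is the lowest level at which compactness can be lost. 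To obtain \emph{positive} solutions one works with $I_\lambda$ on the positive cone (replacing $u$ by $u_+$) and recovers strict positivity from the strong maximum principle for $(-\Delta)^\alpha$.

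\emph{Step 1: local compactness.} I would first prove that $I_\lambda|_{\mathcal{N}_\lambda}$ satisfies $(PS)_c$ for every $c<c_\infty$. A $(PS)_c$ sequence $(u_n)$ is bounded (combine the Nehari identity with the bound on $I_\lambda(u_n)-\tfrac12\langle I_\lambda'(u_n),u_n\rangle$, which, using $(H_1)$, dominates the logarithmic contribution and then $\int_{\mathbb{R}^N}b|u_n|^{2_\alpha^*}$); its weak limit $u$ solves \eqref{e1.1}, and applying the Brezis--Lieb lemma together with the concentration--compactness principle to $u_n-u$ shows that residual energy can only be stored in bubbles, each of which costs at least $c_\infty$ since $b\le1$, or can escape to infinity, where $a(x)\to0$ annihilates the logarithmic term and $b_\infty<1$ (Remark \ref{r1.2}) makes any escaping bubble strictly more expensive than $c_\infty$; as $c<c_\infty$, the residue vanishes and $u_n\to u$ in $H^\alpha(\mathbb{R}^N)$. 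The same analysis yields a concentration lemma: for $\lambda$ small, any nonnegative $u\in\mathcal{N}_\lambda$ whose energy is close to $c_\infty$ from below must concentrate near a point of $M$, since (by a Pohozaev identity) the limiting autonomous problem $(-\Delta)^\alpha v+v=b(x)|v|^{2_\alpha^*-2}v$ has no solution and a bubble is energetically admissible only where $b=1$.

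\emph{Steps 2--3: testing with bubbles and the barycentre homotopy.} Let $U_{\varepsilon,z}$ be the Aubin--Talenti bubble centred at $z\in M$, cut off near $z$, and let $\Phi_\varepsilon(z):=t(\varepsilon,z)\,U_{\varepsilon,z}\in\mathcal{N}_\lambda$ be its Nehari projection, continuous in $z$. Expanding $I_\lambda$ gives, uniformly for $z\in M$,
\begin{equation*}
\max_{t>0}I_\lambda\big(tU_{\varepsilon,z}\big)\ \le\ c_\infty+C_1\varepsilon^{2\alpha}-C_2\lambda\,\varepsilon^{2\alpha}|\ln\varepsilon|+o(\varepsilon^{2\alpha}),
\end{equation*}
in which the negative gain $-C_2\lambda\varepsilon^{2\alpha}|\ln\varepsilon|$ comes from the logarithmic term and uses $a(z)>0$ (by $(H_3)$, with $M_\delta\subset M_{r_0}$, which is where $\delta<r_0$ enters), the positive loss $C_1\varepsilon^{2\alpha}$ comes from the linear term $u$ in \eqref{e1.1} and has exactly this order because $N>4\alpha$ (so that $\int_{\mathbb{R}^N}U_{\varepsilon,z}^2\sim\varepsilon^{2\alpha}$), and the Sobolev deficiency $\int_{\mathbb{R}^N}(1-b)U_{\varepsilon,z}^{2_\alpha^*}$ is $o(\varepsilon^{2\alpha})$ by $(H_2)$ (here $\rho>N$ is used). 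Choosing $\varepsilon=\varepsilon(\lambda)$ small enough makes this $\le c_\infty-\kappa(\lambda)$ with $0<\kappa(\lambda)\to0$ as $\lambda\to0$; in particular, for $\lambda$ small these energies lie in $(\tfrac12c_\infty,\,c_\infty)$. Next define a truncated barycentre $\beta:\mathcal{N}_\lambda\to\mathbb{R}^N$ by $\beta(u)=\big(\int_{\mathbb{R}^N}\chi|u|^{2_\alpha^*}\big)^{-1}\int_{\mathbb{R}^N}x\,\chi(x)|u|^{2_\alpha^*}dx$ for a fixed cut-off $\chi$ that equals $1$ on a large ball containing $M$. By the concentration lemma, for $\lambda<\Lambda_\delta$ one has $\beta(u)\in M_\delta$ whenever $u\in\mathcal{N}_\lambda$ is nonnegative with energy in $(\tfrac12c_\infty,\,c_\infty)$; since $\beta(\Phi_\varepsilon(z))\to z$ uniformly in $z\in M$ as $\varepsilon\to0$, the composition $\beta\circ\Phi_\varepsilon:M\to M_\delta$ is homotopic to the inclusion $M\hookrightarrow M_\delta$ through the straight-line homotopy (which stays in $M_\delta$). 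By the usual functoriality of the category this forces the category of the appropriate portion of $\mathcal{N}_\lambda$ (where $(PS)$ holds, by Step 1) to be at least $cat_{M_\delta}(M)$, and running the Ljusternik--Schnirelmann minimax for $I_\lambda|_{\mathcal{N}_\lambda}$ at energy levels in $(0,c_\infty)$ yields at least $cat_{M_\delta}(M)$ critical points of $I_\lambda$ at positive energy, hence $cat_{M_\delta}(M)$ positive solutions of \eqref{e1.1} with positive energy. Adding the negative-energy ground state of Theorem \ref{t1.1} produces $cat_{M_\delta}(M)+1$ distinct positive solutions.

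\emph{Main obstacle.} The crux is Step 1 together with the concentration lemma: the logarithmic nonlinearity $a(x)u\ln|u|$ is not homogeneous, violates the Ambrosetti--Rabinowitz condition and changes sign, so boundedness of Palais--Smale sequences, the Brezis--Lieb splitting, and the behaviour of escaping mass (where $a\to0$) must all be re-examined, while the nonlocal tails of $(-\Delta)^\alpha$ have to be controlled throughout. A second delicate point is that the linear term $u$ on the left of \eqref{e1.1} \emph{raises} the energy of the test bubbles (unlike a $-\lambda u$ term), so the logarithmic gain must dominate it; this is possible only for $\varepsilon$ exponentially small in $1/\lambda$, and together with the concentration lemma is precisely what forces the smallness restriction $\lambda\in(0,\Lambda_\delta)$ and the appearance of $cat_{M_\delta}(M)$ rather than $cat(M)$.
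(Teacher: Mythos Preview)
Your overall architecture (Benci--Cerami barycentre argument on a Nehari sublevel set, plus the negative-energy ground state from Theorem \ref{t1.1}) matches the paper's, but your Step 1 contains a real gap that propagates into Steps 2--3. For sign-changing $a$, the $(PS)_c$ threshold is \emph{not} $c_\infty=\tfrac{\alpha}{N}S^{N/(2\alpha)}$ but rather $\alpha_\lambda^+ + c_\infty$ (see Lemma \ref{l3.1}(i)). The reason is exactly the scenario you dismiss: the weak limit $u$ of a $(PS)_c$ sequence is a critical point of $I_\lambda$, hence may lie in $N_\lambda^+$, where $I_\lambda(u)\ge\alpha_\lambda^+<0$; then one bubble on top of $u$ gives a $(PS)$ level $\ge \alpha_\lambda^+ + c_\infty$, which is \emph{below} $c_\infty$. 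Your sentence ``as $c<c_\infty$, the residue vanishes'' tacitly assumes $I_\lambda(u)\ge0$, and that is false precisely in the sign-changing case.

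This matters for your test functions. Your bare bubbles $t(\varepsilon,z)U_{\varepsilon,z}$ have energy $\le c_\infty-\kappa(\lambda)$ with $\kappa(\lambda)\sim (\lambda C_2|\ln\varepsilon|-C_1)\varepsilon^{2\alpha}$; to make this positive you must take $\varepsilon<e^{-C/\lambda}$, so $\kappa(\lambda)$ is exponentially small in $1/\lambda$. But $|\alpha_\lambda^+|$ is only polynomially small (Corollary \ref{c2.1}), so for small $\lambda$ one has $c_\infty-\kappa(\lambda)>\alpha_\lambda^++c_\infty$ and your sublevel set sits \emph{above} the genuine compactness window. The paper cures this by taking $F(z)=\omega_\lambda^++t_{\varepsilon,z}^-v_{\varepsilon,z}$ (Lemma \ref{l3.4}) instead of a bare bubble: the $\omega_\lambda^+$ piece shifts the energy down by exactly $\alpha_\lambda^+$, and the \emph{interaction} in the critical term, $j(t)\gtrsim \varepsilon^{(N-2\alpha)/2}$ in \eqref{ee3.24}--\eqref{e3.26}, provides a gain of polynomial order, yielding $\max_{t\ge0}I_\lambda(\omega_\lambda^++tv_{\varepsilon,z})<\alpha_\lambda^++c_\infty-\overline{\sigma}$ (Lemma \ref{l3.3}(ii)). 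Correspondingly the barycentre must be taken relative to $\omega_\lambda^+$, namely $\Phi(\omega)=\dfrac{\int x|\omega-\omega_\lambda^+|^{2_\alpha^*}}{\int|\omega-\omega_\lambda^+|^{2_\alpha^*}}$ (Lemmas \ref{l4.3}(i), \ref{l4.4}(i)), not the plain $\beta$ you propose. In short, your plan is essentially the paper's proof of Theorem \ref{t1.4} (the case $a\ge0$) transplanted to the sign-changing case; what is missing is precisely the device---gluing in $\omega_\lambda^+$---that compensates for the lowered compactness threshold.
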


\begin{theorem}\label{t1.4}
Assume that conditions $(H_1)-(H_3)$ hold and  $a(x)\geq0$. Then for each $\delta<r_0$, there exists $\overline{\Lambda}_\delta>0$ such that if $\lambda\in(0,\overline{\Lambda}_\delta)$, equation (\ref{e1.1}) has at least  $cat_{M_\delta}(M)$ distinct positive solutions.
\end{theorem}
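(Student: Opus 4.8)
The plan is to run a Ljusternik--Schnirelmann category argument on the Nehari manifold, in the spirit of Benci--Cerami, with the logarithmic term treated as a lower-order perturbation that — somewhat against intuition — is exactly what makes the energy estimate work. I would work in $X=H^\alpha(\mathbb{R}^N)$ with $\|u\|^2=[u]_\alpha^2+\int_{\mathbb{R}^N}u^2$, let $S$ be the best constant of $X\hookrightarrow L^{2_\alpha^*}$, and set $c_\infty:=\frac{\alpha}{N}S^{N/(2\alpha)}$. To force positivity, replace $|u|^{2_\alpha^*-2}u$ and $u\ln|u|$ by $(u^+)^{2_\alpha^*-1}$ and $u^+\ln u^+$, so that the energy functional is
$$I_\lambda(u)=\frac12\|u\|^2-\lambda\!\int_{\mathbb{R}^N}\!a(x)\Big(\tfrac12(u^+)^2\ln u^+-\tfrac14(u^+)^2\Big)dx-\frac1{2_\alpha^*}\!\int_{\mathbb{R}^N}\!b(x)(u^+)^{2_\alpha^*}dx;$$
testing a critical point with $u^-$ gives $u\ge0$, and the strong maximum principle for $(-\Delta)^\alpha$ upgrades this to $u>0$. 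On the Nehari manifold $\mathcal N_\lambda=\{u\in X\setminus\{0\}:\langle I_\lambda'(u),u\rangle=0\}$ one checks, exactly as in the proof of Theorem \ref{t1.2}, that for small $\lambda$ this is a $C^1$ natural constraint, $c_\lambda:=\inf_{\mathcal N_\lambda}I_\lambda>0$, and $c_\lambda<c_\infty$ with $c_\lambda\to c_\infty$ as $\lambda\to0^+$.

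The compactness ingredient I would establish is a local Palais--Smale condition: there is $\Lambda>0$ such that for $\lambda\in(0,\Lambda)$, $I_\lambda|_{\mathcal N_\lambda}$ satisfies $(PS)_c$ for every $c<c_\infty$. This follows from a concentration--compactness analysis of a bounded $(PS)_c$ sequence: either it converges, or mass is lost through finitely many rescaled profiles; by $(H_1)$ the logarithmic term contributes nothing to an escaping or rescaled bubble, while $b\le1$ together with $b_\infty<1$ (Remark \ref{r1.2}) forces each bubble to carry energy at least $c_\infty$, which is impossible below $c_\infty$.

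Next come the two maps. For $z\in M$ let $\Phi_{\varepsilon,z}(x)=\eta(x-z)U_\varepsilon(x-z)$ be a cut-off of the Aubin--Talenti extremal $U_\varepsilon$ for the fractional Sobolev inequality, concentrated at scale $\varepsilon$ with $\eta$ supported in $B_{r_0}(z)$, and put $\Phi_\lambda(z)=t_\lambda(z)\Phi_{\varepsilon(\lambda),z}\in\mathcal N_\lambda$, where $t_\lambda(z)>0$ is the unique Nehari scaling and $\varepsilon(\lambda)$ is fixed below. Since $N>4\alpha$ one has $U_\varepsilon\in L^2$, and the usual expansion gives, uniformly in $z\in M$,
$$\sup_{t>0}I_\lambda\big(t\Phi_{\varepsilon,z}\big)\le c_\infty+C_1\varepsilon^{2\alpha}-\lambda\,C_2\,a(z)\,\varepsilon^{2\alpha}\ln\tfrac1\varepsilon+o\!\left(\varepsilon^{2\alpha}\ln\tfrac1\varepsilon\right),$$
with $C_1,C_2>0$: the positive $\varepsilon^{2\alpha}$ term comes from the mass term $\int u^2$, the $O(\varepsilon^N)$ contribution of $b$ is absorbed using $(H_2)$ with $\rho>N$ (and $N>4\alpha$), and the negative term comes from $u^+\ln u^+$ on the region where $\Phi_{\varepsilon,z}$ is large, where $(H_3)$ gives $a_0:=\min_M a>0$. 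Choosing $\varepsilon(\lambda)=\exp\!\big(-2C_1/(\lambda C_2 a_0)\big)$ makes $\lambda C_2 a(z)\ln\frac1{\varepsilon(\lambda)}\ge 2C_1$, so $d_\lambda:=\sup_{z\in M}I_\lambda(\Phi_\lambda(z))\le c_\infty-C_1\varepsilon(\lambda)^{2\alpha}<c_\infty$, while $\varepsilon(\lambda)\to0$ forces $d_\lambda\to c_\infty$. For the reverse direction use the barycenter $\beta(u)=\big(\int_{\mathbb{R}^N}x(u^+)^{2_\alpha^*}dx\big)/\big(\int_{\mathbb{R}^N}(u^+)^{2_\alpha^*}dx\big)$; a concentration argument — if $u_n\in\mathcal N_{\lambda_n}$ with $\lambda_n\to0$ and $I_{\lambda_n}(u_n)\to c_\infty$ then, by $(H_1)$, $b\le1$, $b_\infty<1$, the $u_n$ blow up at a single point of $M$ — shows that for each $\delta<r_0$ there are $\varepsilon_\delta,\lambda_\delta>0$ so that $u\in\mathcal N_\lambda$, $I_\lambda(u)\le d_\lambda$, $\lambda<\lambda_\delta$ imply $\beta(u)\in M_\delta$. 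Finally $\beta(\Phi_\lambda(z))\to z$ uniformly on $M$ as $\lambda\to0$, so for $\lambda$ small $\beta\circ\Phi_\lambda$ is homotopic in $M_\delta$ to the inclusion $M\hookrightarrow M_\delta$. A standard inequality then gives $cat(I_\lambda^{d_\lambda}\cap\mathcal N_\lambda)\ge cat_{M_\delta}(M)$, and since $(PS)_c$ holds for $c\le d_\lambda<c_\infty$, the Ljusternik--Schnirelmann theorem produces at least $cat_{M_\delta}(M)$ critical points of $I_\lambda|_{\mathcal N_\lambda}$; by the positivity step they are distinct positive solutions of \eqref{e1.1}, and $\overline\Lambda_\delta:=\min\{\Lambda,\lambda_\delta,\text{(threshold making }\varepsilon(\lambda)<\varepsilon_\delta)\}$.

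The main obstacle is the energy estimate $d_\lambda<c_\infty$. Because $N>4\alpha$, the truncated bubble contributes a \emph{positive} error of order $\varepsilon^{2\alpha}$, larger than the $\varepsilon^{N-2\alpha}$ of the classical Brezis--Nirenberg regime, so one cannot send $\varepsilon\to0$ at fixed $\lambda$; the gain must come entirely from the logarithmic nonlinearity, whose contribution is negative of order $\lambda\varepsilon^{2\alpha}\ln\frac1\varepsilon$ near the concentration point, and this dictates the coupled choice $\varepsilon=\varepsilon(\lambda)$ with $\lambda\ln\frac1{\varepsilon(\lambda)}$ bounded away from $0$. Making this rigorous — controlling the sign of $u^+\ln u^+$ on the region where $\Phi_{\varepsilon,z}<1$, checking that all remaining terms are $o(\varepsilon^{2\alpha}\ln\frac1\varepsilon)$, and carrying the non-Ambrosetti--Rabinowitz, sign-indefinite logarithmic term through both the $(PS)$ analysis and the barycentric concentration lemma — is where the real effort goes.
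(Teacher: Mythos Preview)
Your proposal is correct and follows essentially the same Benci--Cerami scheme as the paper: truncated Aubin--Talenti profiles centered on $M$ mapped into the Nehari set, a barycenter map in the reverse direction, and Ljusternik--Schnirelmann category applied on a sublevel set below the critical threshold $\frac{\alpha}{N}S^{N/(2\alpha)}$, with positivity obtained by working with the truncated functional. The paper carries this out in the Caffarelli--Silvestre extension space $X^\alpha(\mathbb{R}_+^{N+1})$ rather than directly in $H^\alpha(\mathbb{R}^N)$, and its barycenter lemma (Lemmas~\ref{l4.3}--\ref{l4.4}) factors through the unperturbed Nehari manifold $N_\infty$ via the projection $\omega\mapsto t_\infty(\omega)\omega$, which cleanly separates the $\lambda$-dependence from the concentration argument; your direct concentration version is equivalent but requires keeping track of $\lambda$ throughout.

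One small correction to your discussion of the ``main obstacle'': the statement that ``one cannot send $\varepsilon\to0$ at fixed $\lambda$'' is not quite right. At fixed $\lambda>0$ the logarithmic gain $\lambda C_2\,\varepsilon^{2\alpha}\ln\tfrac1\varepsilon$ \emph{does} eventually dominate the mass error $C_1\varepsilon^{2\alpha}$, since $\ln\tfrac1\varepsilon\to\infty$; this is exactly how the paper's Lemma~\ref{l3.3}(i) proceeds. The real reason a coupled choice $\varepsilon=\varepsilon(\lambda)$ is needed is that the theorem must hold for all small $\lambda$, and the barycenter concentration step requires $\varepsilon(\lambda)\to0$ as $\lambda\to0$, while simultaneously keeping $\lambda\ln\tfrac1{\varepsilon(\lambda)}$ bounded below so that the sublevel stays strictly under $c_\infty$. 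Your choice $\varepsilon(\lambda)=\exp(-2C_1/(\lambda C_2 a_0))$ achieves both, so the argument goes through; only the motivation needs rephrasing.
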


Particularly, when $b(x)\equiv1$, we have

\begin{theorem}\label{t1.5}
Assume that condition $(H_1)$ holds and  $a(x)$ is sign-changing. Then there exists $\Lambda_3>0$ such that if $\lambda\in(0,\Lambda_3)$, equation (\ref{e1.1}) has at least two distinct positive solutions.
\end{theorem}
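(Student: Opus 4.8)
The plan is to treat this as the special case $b(x)\equiv 1$ of Theorems 1.2–1.4, where the weight condition $(H_2)$ becomes trivial (one may take $M=\mathbb{R}^N$ in the sense that $b\equiv 1=\max b$, so that $(H_2)$ holds with any $\rho$), and then to exploit the interplay between the \emph{global} ground state (obtained from a Brezis–Nirenberg-type analysis) and a \emph{second}, higher-energy critical point coming from the sign-changing part of $a(x)$. Since $a$ is sign-changing, the energy functional associated to (\ref{e1.1}),
\begin{equation*}
I_\lambda(u)=\frac12\|u\|^2-\frac{\lambda}{2}\int_{\mathbb{R}^N}a(x)u^2\ln|u|\,dx+\frac{\lambda}{4}\int_{\mathbb{R}^N}a(x)u^2\,dx-\frac{1}{2_\alpha^*}\int_{\mathbb{R}^N}|u|^{2_\alpha^*}\,dx,
\end{equation*}
restricted to the Nehari manifold $\mathcal{N}_\lambda$, is expected to split (for $\lambda$ small) into two components $\mathcal{N}_\lambda^+$ and $\mathcal{N}_\lambda^-$ according to the sign of the fibering-map second derivative; the first positive solution is the ground state $u_1$ found as in Theorem 1.1/1.2 on $\mathcal{N}_\lambda^+$ (or as the global minimizer of $I_\lambda$ on $\mathcal{N}_\lambda$), with $I_\lambda(u_1)<0$ when we test with functions supported where $a<0$, and the second solution $u_2$ will be produced as a minimizer of $I_\lambda$ over $\mathcal{N}_\lambda^-$.

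First I would establish the decomposition $\mathcal{N}_\lambda=\mathcal{N}_\lambda^+\cup\mathcal{N}_\lambda^0\cup\mathcal{N}_\lambda^-$ and show, via the implicit function theorem, that for $\lambda$ below a threshold $\Lambda_3$ the degenerate set $\mathcal{N}_\lambda^0$ contains only $0$, so that $\mathcal{N}_\lambda^\pm$ are $C^1$ natural constraints and any critical point of $I_\lambda|_{\mathcal{N}_\lambda^\pm}$ is a free critical point of $I_\lambda$; here the logarithmic term must be handled with the standard inequality $|t^2\ln|t||\le C_\va(|t|^{2-\va}+|t|^{2+\va})$ to control it by the $H^\alpha$-norm (recall $N>4\alpha$ guarantees $2+\va<2_\alpha^*$), and with the sign-changing weight $a$ one checks that the fibering map $t\mapsto I_\lambda(tu)$ has the correct two-bump (or one-well, one-max) shape for suitable $u$. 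Second I would show $c_\lambda^-:=\inf_{\mathcal{N}_\lambda^-}I_\lambda$ lies strictly below the first critical level for non-compactness of the Palais–Smale condition, namely below $\frac{\alpha}{N}S^{N/(2\alpha)}$ (the fractional Sobolev constant $S$ appearing because $b\equiv1$), which is where the logarithmic term helps: testing with a translated-and-rescaled Aubin–Talenti bubble $U_\va$ concentrated at a point where $a>0$, the logarithmic term $-\frac{\lambda}{2}\int a U_\va^2\ln U_\va$ contributes a negative quantity of order $\va^{-?}$ that strictly lowers the mountain-pass level below the critical threshold, exactly as in Brezis–Nirenberg but with $\ln$ replacing the $u^q$ perturbation. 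Third, with the sub-criticality of $c_\lambda^-$ in hand, a Palais–Smale sequence at level $c_\lambda^-$ is shown to converge strongly (ruling out bubbling via the concentration-compactness / Brezis–Lieb argument and ruling out escape to infinity via $(H_1)$, which kills the logarithmic term at infinity), yielding a minimizer $u_2\in\mathcal{N}_\lambda^-$; the strong maximum principle for $(-\Delta)^\alpha$ then makes $|u_2|$ (hence $u_2$) positive, and since $\mathcal{N}_\lambda^+$ and $\mathcal{N}_\lambda^-$ are disjoint with $I_\lambda(u_1)\le c_\lambda^+<0<c_\lambda^-\le I_\lambda(u_2)$ (the strict sign separation coming from the fibering analysis), the two solutions are distinct.

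The main obstacle I anticipate is Step 2 together with the compactness claim in Step 3: one must pin down the precise asymptotic expansion of $\int a(x)U_\va^2\ln U_\va\,dx$ as $\va\to0$ for the fractional bubble $U_\va$ — the logarithm produces a $\ln(1/\va)$ factor multiplying the leading power of $\va$, and one has to verify that this term genuinely dominates the positive error terms $O(\va^{N-2\alpha})$ and $O(\va^{\,?})$ coming from the mass term $\|U_\va\|^2-S^{N/(2\alpha)}$ and from the difference $\int|U_\va|^{2_\alpha^*}-S^{N/(2\alpha)}$, so that the strict inequality $c_\lambda^-<\frac{\alpha}{N}S^{N/(2\alpha)}$ holds for all small $\lambda$ (not merely for small $\va$ at fixed $\lambda$ — one needs uniformity, which typically forces a careful two-parameter optimization in $(\va,\lambda)$). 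The non-compactness analysis for the logarithmic nonlinearity is also delicate because $a(x)u^2\ln|u|$ is neither homogeneous nor sign-definite, so the usual splitting lemma must be re-proven, using $(H_1)$ to discard the part of the sequence drifting to infinity and a Brezis–Lieb-type decomposition for the critical term; once these two technical points are secured, the rest of the argument is the routine Nehari-manifold machinery already deployed for Theorems 1.1–1.2.
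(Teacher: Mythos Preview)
Your overall strategy --- split the Nehari manifold as $N_\lambda^+\cup N_\lambda^0\cup N_\lambda^-$, minimize on each piece, and use $N_\lambda^0=\emptyset$ for small $\lambda$ --- is exactly the paper's route (Theorem~1.1 for $N_\lambda^+$, Remark~3.1 for $N_\lambda^-$). However, there is a genuine gap in Step~2. When $a$ is sign-changing, the first Palais--Smale noncompactness level is \emph{not} $\tfrac{\alpha}{N}S^{N/(2\alpha)}$ but rather $\alpha_\lambda^{+}+\tfrac{\alpha}{N}S^{N/(2\alpha)}$, which is strictly smaller because $\alpha_\lambda^{+}<0$ (see Lemma~3.1(i) of the paper). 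The reason is that the weak limit $\omega_0$ of a $(PS)_c$ sequence may land in $N_\lambda^+$ with energy as low as $\alpha_\lambda^{+}$; a bubble of energy $\tfrac{\alpha}{N}S^{N/(2\alpha)}$ can then detach, producing a noncompact $(PS)$ sequence at level $\alpha_\lambda^{+}+\tfrac{\alpha}{N}S^{N/(2\alpha)}<\tfrac{\alpha}{N}S^{N/(2\alpha)}$. Your plain-bubble test function $U_\varepsilon$ only yields $\alpha_\lambda^{-}<\tfrac{\alpha}{N}S^{N/(2\alpha)}$, which is insufficient.

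The fix, which is the paper's Lemma~3.3(ii), is to test with $\omega_\lambda^{+}+t\,v_{\varepsilon,z}$, i.e.\ the first solution plus a bubble concentrated where $a>0$. One then shows
\[
\max_{t\ge 0} I_\lambda(\omega_\lambda^{+}+t\,v_{\varepsilon,z})<\alpha_\lambda^{+}+\frac{\alpha}{N}S^{N/(2\alpha)},
\]
using $I_\lambda'(\omega_\lambda^{+})=0$, an algebraic splitting of the logarithmic term, and the Brezis--Nirenberg interaction estimate $\int(\omega_\lambda^{+})\,v_{\varepsilon,z}^{2_\alpha^*-1}\gtrsim \varepsilon^{(N-2\alpha)/2}$. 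A separate argument (Lemma~3.4) is needed to check that some $\omega_\lambda^{+}+t_{\varepsilon,z}^{-}v_{\varepsilon,z}$ actually lies in $N_\lambda^{-}$; this is nontrivial because the path $t\mapsto\omega_\lambda^{+}+t\,v_{\varepsilon,z}$ starts in $N_\lambda^{+}$, and one must show it crosses $N_\lambda^{-}$. Finally, your claimed sign separation $c_\lambda^{+}<0<c_\lambda^{-}$ is not established (nor needed) for sign-changing $a$; distinctness of the two solutions follows simply from $N_\lambda^{+}\cap N_\lambda^{-}=\emptyset$.
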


\begin{theorem}\label{t1.6}
Assume that condition $(H_1)$ holds and  $a(x)\geq0$ but $a(x)\not\equiv0$. Then there exists $\overline{\Lambda}_3>0$ such that if $\lambda\in(0,\overline{\Lambda}_3)$, equation (\ref{e1.1}) has at least one positive ground state solution and the ground energy of (\ref{e1.1}) is positive.
\end{theorem}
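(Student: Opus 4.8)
The plan is to treat Theorem~\ref{t1.6} as the special case $b(x)\equiv 1$ of Theorem~\ref{t1.2}, and to verify that the hypotheses of Theorem~\ref{t1.2} are satisfied once we know $a\ge 0$, $a\not\equiv 0$, and $(H_1)$ holds. When $b\equiv 1$, condition $(H_2)$ is automatic: the set $M=\{z:\,b(z)=\max b=1\}$ is all of $\mathbb R^N$, and $b(z)-b(x)=0=O(|x-z|^\rho)$ trivially for any $\rho>N$; likewise $(H_3)$ reduces to asking $a>0$ on $M=\mathbb R^N$, which is \emph{not} implied by $a\ge 0$, $a\not\equiv 0$. So the first real step is to observe that in the proof of Theorem~\ref{t1.2}, the full strength of $(H_3)$ (namely $a>0$ on the maximum set $M$) is only used locally, to guarantee the logarithmic term is active near the concentration points; when $b\equiv 1$ the only thing needed is that $a$ is strictly positive \emph{somewhere}, i.e.\ on an open set where one can localize a concentrating family of test functions. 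Thus I would restate and re-run the Theorem~\ref{t1.2} argument with $M$ replaced by a small ball $B_{r}(x_0)$ on which $a>0$ (such $x_0$ exists because $a$ is continuous, $a\ge 0$, $a\not\equiv 0$), noting that $(H_1)$ still forces $a(x)\to 0$ at infinity so the ground-state comparison with the ``problem at infinity'' is unchanged.

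The core analytic scheme is the Nehari manifold plus a Brezis--Nirenberg-type energy estimate. First I would set up the functional
\[
I_\lambda(u)=\frac12\|u\|^2-\frac{\lambda}{2}\int_{\mathbb R^N} a(x)u^2\ln|u|\,dx+\frac{\lambda}{4}\int_{\mathbb R^N} a(x)u^2\,dx-\frac{1}{2_\alpha^*}\int_{\mathbb R^N}|u|^{2_\alpha^*}\,dx
\]
on $H^\alpha(\mathbb R^N)$, where $\|u\|^2=[u]_\alpha^2+\|u\|_2^2$, and the Nehari manifold $\mathcal N_\lambda=\{u\ne 0:\,\langle I_\lambda'(u),u\rangle=0\}$. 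Because of the logarithmic term, the map $t\mapsto I_\lambda(tu)$ is not the usual polynomial, but for $a\ge 0$ one checks as in the proof of Theorem~\ref{t1.2} that for $\lambda$ small each ray meets $\mathcal N_\lambda$ exactly once and $c_\lambda=\inf_{\mathcal N_\lambda}I_\lambda>0$. Second, I would establish the key strict inequality $c_\lambda < \frac{\alpha}{N}S_\alpha^{N/2\alpha}$ (with $S_\alpha$ the fractional Sobolev constant and $b\equiv 1$), which is exactly where the positivity of $a$ on a ball and the logarithmic nonlinearity help: plugging the Talenti-type extremal family $U_\varepsilon$, cut off and centered at a point where $a>0$, the extra term $-\frac{\lambda}{2}\int a\,U_\varepsilon^2\ln U_\varepsilon$ contributes a gain of the right sign and order (the $\ln(1/\varepsilon)$ blow-up of $-U_\varepsilon^2\ln U_\varepsilon$ on the concentration scale beats the $O(\varepsilon^{N-2\alpha})$ error terms since $N>4\alpha$), pushing the mountain-pass level strictly below the critical threshold. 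Third, with this sub-threshold bound in hand, a concentration-compactness / Brezis--Lieb argument rules out vanishing and dichotomy for a minimizing (PS)$_{c_\lambda}$ sequence — here one uses $(H_1)$ so the ``escaping mass'' would have energy $\ge$ that of the constant-coefficient critical problem, contradicting $c_\lambda<\frac{\alpha}{N}S_\alpha^{N/2\alpha}$ — so the sequence converges strongly to a nonzero critical point $u_\lambda$. Finally, replacing $u_\lambda$ by $|u_\lambda|$ and applying a fractional maximum principle / Harnack inequality gives a positive ground state, and $c_\lambda>0$ gives positive ground energy.

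The main obstacle, and the step deserving the most care, is the sub-threshold energy estimate together with the convergence of the (PS) sequence, because the logarithmic term destroys homogeneity and the Ambrosetti--Rabinowitz condition. Concretely: (i) one must control $\int a(x)u^2\ln|u|$ on the Nehari manifold — it is neither bounded below nor above by a single power of $\|u\|$ — and show it does not prevent the (PS) sequence from being bounded; the standard trick is to use the Nehari identity $\langle I_\lambda'(u),u\rangle=0$ to express $\|u\|^2$ via the other terms and absorb the logarithm with the elementary inequalities $|s|^2\ln|s|\le C_\eta(|s|^{2-\eta}+|s|^{2_\alpha^*})$, valid for small $\eta>0$, using $N>4\alpha$ to keep the exponents subcritical; (ii) in the energy estimate, one must track how the $-U_\varepsilon^2\ln U_\varepsilon$ integral against the \emph{variable} weight $a$ behaves — since $a>0$ only on a small ball, the cutoff must be supported there, and one needs the logarithmic gain to dominate uniformly in the small parameter $\lambda$ after optimizing over $\varepsilon=\varepsilon(\lambda)$. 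Once these two points are handled exactly as in the already-proved Theorem~\ref{t1.2} (with $M$ replaced by a ball where $a>0$), the rest — existence of the Nehari minimizer, positivity via truncation and the maximum principle, and positivity of the energy — is routine and identical to the proof of Theorem~\ref{t1.2}.
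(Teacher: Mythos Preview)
Your approach is correct and matches the paper's in substance. The paper itself omits the proof, declaring in the Remark after Theorems~\ref{t1.5}--\ref{t1.6} that Theorem~\ref{t1.6} is a special case of Theorem~\ref{t1.4}; your choice to reduce instead to Theorem~\ref{t1.2} is equally valid (and arguably more natural, since Theorem~\ref{t1.6} is a ground-state statement, not a multiplicity one). Either way the core is the same: Lemma~\ref{l2.5}\,(ii) gives $N_\lambda^+=\emptyset$ and $\alpha_\lambda^->0$; Lemma~\ref{l3.3}\,(i) with the test function centered at a point where $a>0$ gives the sub-threshold estimate $\alpha_\lambda^-<\frac{\alpha}{N}S^{N/2\alpha}$; and Lemma~\ref{l3.1}\,(ii) provides the $(PS)_c$ condition below that threshold.

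One small correction: your claim that ``$(H_2)$ is automatic'' when $b\equiv 1$ is not right as stated, because $(H_2)$ requires $M$ to be \emph{compact}, and here $M=\mathbb{R}^N$. (Relatedly, Remark~\ref{r1.2} would give $b_\infty<1$, which fails when $b\equiv 1$.) However, your argument does not actually use $(H_2)$: you correctly observe that the only role of $M$ in the proof of Theorem~\ref{t1.2} is to supply a center for $v_{\varepsilon,z}$ at which $a>0$ and $b$ attains its maximum, and when $b\equiv 1$ any point $x_0$ with $a(x_0)>0$ serves (such $x_0$ exists since $a\ge 0$, $a\not\equiv 0$, $a$ continuous). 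With that single adjustment, Lemma~\ref{l3.2} becomes trivial, Lemma~\ref{l3.3}\,(i) goes through verbatim, and neither Lemma~\ref{l3.1}\,(ii) nor Lemma~\ref{l2.5}\,(ii) uses compactness of $M$ or $b_\infty<1$. So the proof is complete once you fix that one sentence.
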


\begin{remark}
Theorems \ref{t1.5} and \ref{t1.6} are the special cases of Theorems \ref{t1.3} and \ref{t1.4}, respectively, so we will omit the proofs.
\end{remark}

To achieve our aim, the Nehari manifold and the Ljusternik-Schnirelmann theory are main tools in this study.
The main feature which distinguishes this paper from other related works lies in the fact that
in the proofs of our results, one of primary difficulties is that the logarithmic nonlinearity does not satisfy the monotonicity condition or Ambrosetti-Rabinowitz condition and this type of nonlinearity may change sign in $\mathbb{R}^N$, which makes discussions more complicated than those without logarithmic nonlinearity. Another primary obstacle is the lack of compactness caused by the unbounded domain and the critical nonlinearity. Some concentration compactness results for the fractional Schr\"{o}dinger equations seem correct but have not been proved yet and thus cannot be applied directly. All these difficulties mentioned above prevent us from using the classical variational methods in a standard way, so innovative techniques are highly needed.

The remainder of this paper is structured as follows. In Section 2, we recall some basic definitions, present the variational setting for the problem and study some properties of the corresponding Nehari manifold. Moreover, we present the proof of Theorem \ref{t1.1}. In Section 3, we obtain useful estimates  and use them to prove Theorem \ref{t1.2}. Section 4 is dedicated to the proofs of Theorems \ref{t1.3} and \ref{t1.4} by means of the Nehari manifold method and Ljusternik-Schnirelmann category theory.

For convenience of our statements,  throughout this article we will use the following notation.

\begin{itemize}
\item[$\bullet$]$\rightarrow$ (resp. $\rightharpoonup$) the strong (resp. weak) convergence.
\item[$\bullet$]$|\cdot|_r$ the usual norm of the space $L^r(\mathbb{R}^N),\ (1\leq r<\infty)$.
\item[$\bullet$]$|\cdot|_\infty$ denotes the norm of the space $L^\infty(\mathbb{R}^N)$.
\item[$\bullet$]$C$ or $C_i\ (i=0,1,2,\ldots)$ denotes positive constants that may change from line to line.
\item[$\bullet$]$\mathbb{R}^{N+1}_+=\{(x_1,x_2,...,x_{N+1})\in\mathbb{R}^{N+1}|x_{N+1}\geq0\}.$
\item[$\bullet$]$B_r=\{x\in \mathbb{R}^N;|x|<r\}$ denotes a ball of radius $r$ in Euclidean spaces.
\end{itemize}

\section{Preliminaries and Proof of Theorem \ref{t1.1}}\noindent

In this section, we first introduce the definition of $\alpha$-harmonic extension. Then we present the variational setting for the problem and properties of the corresponding Nehari manifold. Finally, we use these properties to prove Theorem \ref{t1.1}.

Denote the fractional Sobolev space $H^\alpha(\mathbb{R}^N)$ as the completion of $C_0^\infty(\mathbb{R}^N)$ with the norm:
\begin{equation*}
  \|u\|_{H^\alpha}:=\left(\displaystyle\int_{\mathbb{R}^N}|(-\Delta)^\frac{\alpha}{2}u|^2dx\right)^{\frac{1}{2}}+|u|_2.
\end{equation*}
Then $H^\alpha(\mathbb{R}^N)\hookrightarrow L^r(\mathbb{R}^N)$, $r\in[2,2_\alpha^*]$, and this embedding is locally compact while $r\in[1,2_\alpha^*)$ (see \cite{1}).

To study the corresponding extension problem, we apply an extension method \cite{33} and define the extension function in $H^\alpha(\mathbb{R}^N)$ as follows.

\begin{definition}\label{d2.1}
Given a function $u\in H^\alpha(\mathbb{R}^N)$,  we define the $\alpha$-harmonic extension $E_\alpha(u)=\omega$ to the problem:
\begin{displaymath}
\left\{\begin{array}{ll}
div(y^{1-2\alpha}\nabla\omega)=0,~ &\text{in}~\mathbb{R}_+^{N+1},\\
\omega=u,~~&\text{on}~\mathbb{R}^N\times\{0\}.
\end{array}\right.
\end{displaymath}
The extension function $\omega(x,y)$ has an explicit expression in term of the Poisson and Riesz kernel, i.e.
\begin{equation*}
\omega(x,y)=P_y^\alpha*u(x)=\displaystyle\int_{\mathbb{R}^N}P_y^\alpha(x-\xi,u)u(\xi)d\xi,
\end{equation*}
where $P_y^\alpha(x)=C(N,\alpha)\frac{y^{2\alpha}}{(|x|^2+y^2)^\frac{N+2\alpha}{2}}$ with a constant $C(N,\alpha)$ such that $\displaystyle\int_{\mathbb{R}^N}P_1^\alpha(x)dx=1$ (see \cite{33}).
 \end{definition}

Define the space
\begin{equation*}
X^\alpha(\mathbb{R}_+^{N+1}):=\left\{\omega(x,y)\in C_0^\infty(\mathbb{R}_+^{N+1});\displaystyle\int_{\mathbb{R}_+^{N+1}}k_\alpha y^{1-2\alpha}|\nabla\omega|^2dxdy+\displaystyle\int_{\mathbb{R}^N}|\omega(x,0)|^2dx<\infty\right\},
\end{equation*}
equipped with the norm:
\begin{equation*}
 \|\omega\|_X=\left(\displaystyle\int_{\mathbb{R}_+^{N+1}} y^{1-2\alpha}|\nabla\omega|^2dxdy+\displaystyle\int_{\mathbb{R}^N}|\omega(x,0)|^2dx\right)^{\frac{1}{2}}.
\end{equation*}

Note that
\begin{equation}\label{e2.1}
 \displaystyle\int_{\mathbb{R}_+^{N+1}}k_\alpha y^{1-2\alpha}|\nabla\omega|^2dxdy=\displaystyle\int_{\mathbb{R}^N}|\Delta^\frac{\alpha}{2}u|^2dx,
\end{equation}
where $\omega=E_\alpha(u)$ and $k_\alpha$ is a normal positive constant \cite{33}. So the function $E_\alpha(\cdot)$ is an isometry between $H^\alpha(\mathbb{R}^N)$ and $X^\alpha(\mathbb{R}_+^{N+1})$.
Then  we  re-formulate (\ref{e1.1})  as follows:
\begin{equation}\label{e2.2}
\left\{\begin{array}{ll}
div(y^{1-2\alpha}\nabla\omega)=0,~ &\text{in}~\mathbb{R}_+^{N+1},\\
-k_\alpha\frac{\partial\omega}{\partial\nu}=-\omega+\lambda a(x)\omega\ln|\omega|+b(x)|\omega|^{2_\alpha^*-2}\omega,~~&\text{on}~\mathbb{R}^N\times\{0\},
\end{array}\right.
\end{equation}
where
\begin{equation*}
  -k_\alpha\frac{\partial\omega}{\partial\nu}=-k_\alpha\displaystyle\lim_{y\rightarrow0^+}y^{1-2\alpha}\frac{\partial\omega}{\partial y}(x,y)= (-\Delta)^\alpha u(x).
\end{equation*}

In what follows, we set $k_\alpha=1$, for simplicity.
If $\omega$ is a solution of (\ref{e2.2}), then the trace $u=tr(\omega)=\omega(x,0)$ is a solution of (\ref{e1.1}). Conversely, it is also true.

Let us introduce some properties on the spaces $X^\alpha(\mathbb{R}_+^{N+1})$ and $L^r(\mathbb{R}^N)$.

\begin{proposition}\cite{17}\label{p2.1}
The embedding $X^\alpha(\mathbb{R}_+^{N+1})\hookrightarrow L^r(\mathbb{R}^N)$ is continuous for $r\in[2,2_\alpha^*]$ and locally compact for $r\in [1,2_\alpha^*)$.
\end{proposition}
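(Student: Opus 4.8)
The plan is to transfer the statement to the already–recalled embedding properties of $H^\alpha(\mathbb{R}^N)$ by means of the trace map and the isometry $E_\alpha$. The crucial preliminary step is the \emph{trace inequality}: the restriction operator $\mathrm{tr}:\omega\mapsto\omega(\cdot,0)$ maps $X^\alpha(\mathbb{R}_+^{N+1})$ boundedly into $H^\alpha(\mathbb{R}^N)$. To see this, fix $\omega\in C_0^\infty(\mathbb{R}_+^{N+1})$ and set $u:=\omega(\cdot,0)\in C_0^\infty(\mathbb{R}^N)\subset H^\alpha(\mathbb{R}^N)$. The $\alpha$-harmonic extension $E_\alpha(u)$ from Definition \ref{d2.1} solves the Euler--Lagrange equation of the convex functional $v\mapsto\int_{\mathbb{R}_+^{N+1}}y^{1-2\alpha}|\nabla v|^2\,dx\,dy$ under the constraint of prescribed trace $u$, hence it minimizes that functional among all such competitors; since $\omega$ is one competitor,
$$\int_{\mathbb{R}_+^{N+1}}y^{1-2\alpha}|\nabla\omega|^2\,dx\,dy\ \ge\ \int_{\mathbb{R}_+^{N+1}}y^{1-2\alpha}|\nabla E_\alpha(u)|^2\,dx\,dy\ =\ \int_{\mathbb{R}^N}|(-\Delta)^{\frac{\alpha}{2}}u|^2\,dx,$$
the last identity being (\ref{e2.1}). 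Adding $\int_{\mathbb{R}^N}|\omega(x,0)|^2\,dx=|u|_2^2$ to both ends and using $(a+b)^2\le 2(a^2+b^2)$ gives $\|u\|_{H^\alpha}\le\sqrt{2}\,\|\omega\|_X$. Since $C_0^\infty(\mathbb{R}_+^{N+1})$ is dense in $X^\alpha(\mathbb{R}_+^{N+1})$, $\mathrm{tr}$ extends to a bounded operator $X^\alpha(\mathbb{R}_+^{N+1})\to H^\alpha(\mathbb{R}^N)$ satisfying the same bound.

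Granting this, the continuous embedding for $r\in[2,2_\alpha^*]$ follows at once: composing $\mathrm{tr}$ with the continuous embedding $H^\alpha(\mathbb{R}^N)\hookrightarrow L^r(\mathbb{R}^N)$ recalled above yields $|\omega(\cdot,0)|_r\le C\|\omega(\cdot,0)\|_{H^\alpha}\le C\|\omega\|_X$. For the local compactness when $r\in[1,2_\alpha^*)$, take $\{\omega_n\}$ bounded in $X^\alpha(\mathbb{R}_+^{N+1})$. Then $\{u_n:=\omega_n(\cdot,0)\}$ is bounded in $H^\alpha(\mathbb{R}^N)$ by the trace inequality, so that, since the embedding $H^\alpha(\mathbb{R}^N)\hookrightarrow L^r(\mathbb{R}^N)$ is locally compact for such $r$, a subsequence of $\{u_n\}$ converges strongly in $L^r(B_R)$ for each fixed $R$; a diagonal extraction over $R\in\mathbb{N}$ produces one subsequence converging in $L^r$ on every ball. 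That is precisely the claimed local compactness of $X^\alpha(\mathbb{R}_+^{N+1})\hookrightarrow L^r(\mathbb{R}^N)$.

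The only genuinely delicate point is the trace inequality of the first step — in particular, verifying that traces of $X^\alpha$-functions are well defined and lie in $H^\alpha(\mathbb{R}^N)$ with controlled norm. The argument above is self-contained on the dense class $C_0^\infty(\mathbb{R}_+^{N+1})$, where the trace is literally a restriction, and then passes to the completion; the background ensuring that the weighted space $X^\alpha$ and the extension operator $E_\alpha$ behave as expected (the weight $y^{1-2\alpha}$ is a Muckenhoupt $A_2$ weight) is developed in \cite{33}, and the full statement of the Proposition is also contained in \cite{17}, which may simply be cited.
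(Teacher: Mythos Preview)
Your argument is correct, but there is nothing in the paper to compare it against: the paper states Proposition~\ref{p2.1} with a citation to \cite{17} and gives no proof of its own. What you have done is supply a self-contained justification via the trace inequality $\|\mathrm{tr}\,\omega\|_{H^\alpha}\le\sqrt{2}\,\|\omega\|_X$, obtained from the energy-minimizing property of the $\alpha$-harmonic extension together with (\ref{e2.1}), and then transferred the known Sobolev embeddings of $H^\alpha(\mathbb{R}^N)$ through this bounded trace. This is the standard route and is exactly what underlies the cited result; your final remark that one may simply invoke \cite{17} matches precisely what the paper does.
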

\begin{proposition}\cite{17}\label{p2.2}
For every $\omega\in X^\alpha(\mathbb{R}_+^{N+1})$, there holds
\begin{equation*}
S\left(\displaystyle\int_{\mathbb{R}^N}|u(x)|^\frac{2N}{N-2\alpha}dx\right)^\frac{N-2\alpha}{N}\leq\displaystyle\int_{\mathbb{R}_+^{N+1}}y^{1-2\alpha}|\nabla\omega|^2dxdy,
\end{equation*}
where $u=tr(\omega)$. The best constant is given by
 \begin{equation*}
  S=\frac{2\pi^\alpha\Gamma(\frac{2-2\alpha}{2})\Gamma(\frac{N+2\alpha}{2})(\Gamma(\frac{N}{2}))^\frac{2\alpha}{N}}{\Gamma(\alpha)\Gamma(\frac{N-2\alpha}{2})(\Gamma(N))^\frac{2\alpha}{N}},
\end{equation*}
and it is attained when $u=\omega(x,0)$ takes the form:
\begin{equation*}
u_\varepsilon(x)=\frac{C\varepsilon^\frac{N-2\alpha}{2}}{(\varepsilon^2+|x|^2)^\frac{N-2\alpha}{2}}
\end{equation*}
for  an arbitrary $\varepsilon>0$, $\omega_\varepsilon=E_\alpha(u_\varepsilon)$ and $$\displaystyle\int_{\mathbb{R}_+^{N+1}}y^{1-2\alpha}|\nabla\omega_\varepsilon|^2dxdy
=\displaystyle\int_{\mathbb{R}^N}|\omega_\varepsilon(x,0)|^\frac{2N}{N-2\alpha}dx=S^\frac{N}{2\alpha}.$$
\end{proposition}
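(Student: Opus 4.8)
The plan is to split Proposition \ref{p2.2} into two independent ingredients: (i) the minimality of the $\alpha$-harmonic extension, which together with the isometry (\ref{e2.1}) reduces the weighted inequality on $\mathbb{R}_+^{N+1}$ to the sharp \emph{fractional Sobolev inequality} on $\mathbb{R}^N$; and (ii) the sharp fractional Sobolev inequality itself, whose optimal constant $S$ and extremal profile are inherited from the sharp Hardy--Littlewood--Sobolev inequality.

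For (i), given $\omega\in X^\alpha(\mathbb{R}_+^{N+1})$ with $u=tr(\omega)\in H^\alpha(\mathbb{R}^N)$, I would set $\phi:=\omega-E_\alpha(u)\in X^\alpha(\mathbb{R}_+^{N+1})$, so that $tr(\phi)=0$, and expand:
\begin{equation*}
\int_{\mathbb{R}_+^{N+1}}y^{1-2\alpha}|\nabla\omega|^2\,dxdy=\int_{\mathbb{R}_+^{N+1}}y^{1-2\alpha}|\nabla E_\alpha(u)|^2\,dxdy+\int_{\mathbb{R}_+^{N+1}}y^{1-2\alpha}|\nabla\phi|^2\,dxdy,
\end{equation*}
where the cross term $\int_{\mathbb{R}_+^{N+1}}y^{1-2\alpha}\nabla E_\alpha(u)\cdot\nabla\phi\,dxdy$ drops out after an integration by parts because $\mathrm{div}(y^{1-2\alpha}\nabla E_\alpha(u))=0$ in $\mathbb{R}_+^{N+1}$ and $\phi(x,0)=0$ on $\mathbb{R}^N\times\{0\}$ (the contributions at infinity are killed by density of $C_0^\infty$ in $X^\alpha$); this minimality is essentially the content of the extension method of \cite{33}. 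By (\ref{e2.1}) the first term on the right equals $\int_{\mathbb{R}^N}|(-\Delta)^{\alpha/2}u|^2\,dx$, so $\int_{\mathbb{R}_+^{N+1}}y^{1-2\alpha}|\nabla\omega|^2\,dxdy\geq\int_{\mathbb{R}^N}|(-\Delta)^{\alpha/2}u|^2\,dx$, with equality if and only if $\omega=E_\alpha(u)$. It therefore suffices to prove $S\big(\int_{\mathbb{R}^N}|u|^{2_\alpha^*}\,dx\big)^{2/2_\alpha^*}\leq\int_{\mathbb{R}^N}|(-\Delta)^{\alpha/2}u|^2\,dx$ for all $u\in H^\alpha(\mathbb{R}^N)$, with $S$ as stated and with equality exactly at the functions $u_\varepsilon$.

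For (ii), by Plancherel $\int_{\mathbb{R}^N}|(-\Delta)^{\alpha/2}u|^2\,dx=\int_{\mathbb{R}^N}|\xi|^{2\alpha}|\widehat u(\xi)|^2\,d\xi$; writing $f:=(-\Delta)^{\alpha/2}u$, so that $u$ is the Riesz potential of order $\alpha$ of $f$, the inequality becomes $\|u\|_{2_\alpha^*}^2\leq S^{-1}\|f\|_2^2$, i.e.\ the $L^2\!\to\!L^{2_\alpha^*}$ bound for the Riesz potential of order $\alpha$, which is a conformally invariant instance of Hardy--Littlewood--Sobolev. Lieb's sharp HLS inequality then supplies both the optimal constant and the fact that equality holds precisely when $u$ equals, up to translation, dilation and a scalar multiple, the function $u_\varepsilon(x)=C\varepsilon^{(N-2\alpha)/2}(\varepsilon^2+|x|^2)^{-(N-2\alpha)/2}$; extracting the numerical value of the constant from Lieb's formula — or, equivalently, carrying out the stereographic/conformal computation on $S^N$ in the spirit of Cotsiolis--Tavoularis — yields exactly the $\Gamma$-function expression for $S$. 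A parallel route, staying on the extension side, is to read the inequality of the Proposition as a weighted Sobolev trace inequality on $\mathbb{R}_+^{N+1}$ with Muckenhoupt weight $y^{1-2\alpha}$ and to identify its extremals, via moving planes and symmetrization, as the functions $\omega_\varepsilon=E_\alpha(u_\varepsilon)$.

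Combining (i) and (ii) gives, for every $\omega\in X^\alpha(\mathbb{R}_+^{N+1})$ with $u=tr(\omega)$, the chain $\int_{\mathbb{R}_+^{N+1}}y^{1-2\alpha}|\nabla\omega|^2\,dxdy\geq\int_{\mathbb{R}^N}|(-\Delta)^{\alpha/2}u|^2\,dx\geq S\big(\int_{\mathbb{R}^N}|u|^{2_\alpha^*}\,dx\big)^{2/2_\alpha^*}$, which is the asserted inequality with best constant $S$; equality throughout forces $\omega=E_\alpha(u)$ with $u$ a translate--dilate of $u_\varepsilon$, i.e.\ $\omega=\omega_\varepsilon$. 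Finally, normalizing $C$ so that $\int_{\mathbb{R}^N}|(-\Delta)^{\alpha/2}u_\varepsilon|^2\,dx=\int_{\mathbb{R}^N}|u_\varepsilon|^{2_\alpha^*}\,dx$ (legitimate, since both sides are invariant under the dilation relating the $u_\varepsilon$) and invoking the equality case of the Sobolev inequality pins this common value to $S^{N/2\alpha}$, giving $\int_{\mathbb{R}_+^{N+1}}y^{1-2\alpha}|\nabla\omega_\varepsilon|^2\,dxdy=|\omega_\varepsilon(\cdot,0)|_{2_\alpha^*}^{2_\alpha^*}=S^{N/2\alpha}$. The only genuinely non-elementary step is the sharp-constant/extremal analysis in (ii): it rests on Lieb's classification of HLS extremizers (equivalently on the conformal geometry of the sphere together with a rearrangement argument), whereas the extension-side reduction in (i) and the tracking of explicit constants are routine.
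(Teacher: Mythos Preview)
Your proposal is a correct and standard outline of how one proves this sharp trace inequality: reduce to the fractional Sobolev inequality via the minimality of the $\alpha$-harmonic extension, then invoke Lieb's sharp Hardy--Littlewood--Sobolev inequality for the constant and the classification of extremals. There is nothing to compare against, however, because the paper does not prove Proposition~\ref{p2.2}: it is stated with the citation \cite{17} and used as a known result, with no argument given. Your sketch is therefore considerably more detailed than anything in the paper itself; if anything, you might simply cite the result as the authors do, or point directly to the original sources (Caffarelli--Silvestre \cite{33} for the extension, Lieb for sharp HLS, Cotsiolis--Tavoularis \cite{34} for the fractional Sobolev constant) rather than reprove it.
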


The following property is concerning the logarithmic Sobolev inequality in the fractional Sobolev space.

\begin{proposition}\cite{34}\label{p2.3}
Let $f\in H^\alpha(\mathbb{R}^N)$ and $\sigma>0$ be any number. Then
  \begin{equation*}
  \displaystyle\int_{\mathbb{R}^N}|f|^2\ln\frac{|f|^2}{|f|_2^2}dx\leq\frac{\sigma^2}{\pi^\alpha}|(-\Delta)^\frac{\alpha}{2}f|_2^2-\left[N+\frac{N}{\alpha}\ln\sigma+\ln\frac{\alpha\Gamma(\frac{N}{2})}{\Gamma(\frac{N}{2\alpha})}\right]|f|_2^2.
  \end{equation*}
\end{proposition}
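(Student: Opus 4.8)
The plan is to reduce Proposition \ref{p2.3}, by two scalings, to a single parameter-free (``multiplicative'') inequality, and then to derive that inequality from the sharp fractional Sobolev embedding of Proposition \ref{p2.2}.

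First, since $f\mapsto cf$ leaves the ratio $|f|^2/|f|_2^2$ inside the logarithm unchanged and multiplies $|(-\Delta)^{\frac{\alpha}{2}}f|_2^2$ and $|f|_2^2$ by $c^2$, both sides of the asserted inequality scale by $c^2$, and we may assume $|f|_2=1$. Next, the family of inequalities indexed by $\sigma>0$ is equivalent to the single ``multiplicative'' inequality
\begin{equation*}
\int_{\mathbb{R}^N}|f|^2\ln|f|^2\,dx\le\frac{N}{2\alpha}\,\ln\!\Big(\mathcal{L}_{N,\alpha}\,|(-\Delta)^{\frac{\alpha}{2}}f|_2^2\Big),
\end{equation*}
where $\mathcal{L}_{N,\alpha}$ is the explicit constant determined by $N$, $\alpha$ and $\frac{\alpha\Gamma(N/2)}{\Gamma(N/(2\alpha))}$: minimizing the right-hand side of Proposition \ref{p2.3} over $\sigma>0$ (using $\inf_{\sigma>0}\big(A\sigma^2-B\ln\sigma\big)=\tfrac{B}{2}\big(1+\ln\tfrac{2A}{B}\big)$, attained at $\sigma^2=\tfrac{B}{2A}$) returns exactly the displayed bound, while conversely combining the displayed bound with the elementary estimate $\ln t\le\beta t-1-\ln\beta$ $(t,\beta>0)$ recovers the stated form for every $\sigma$. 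So it suffices to prove the multiplicative inequality under $|f|_2=1$.

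The ``soft'' half is immediate: Jensen's inequality applied to the probability measure $|f|^2\,dx$ and the concave function $\ln$ gives $\int|f|^2\ln|f|^{2_\alpha^*-2}\,dx\le\ln\!\int|f|^{2_\alpha^*}\,dx$, hence
\begin{equation*}
\int_{\mathbb{R}^N}|f|^2\ln|f|^2\,dx\le\frac{2_\alpha^*}{2_\alpha^*-2}\,\ln|f|_{2_\alpha^*}^2=\frac{N}{2\alpha}\,\ln|f|_{2_\alpha^*}^2,
\end{equation*}
and then Proposition \ref{p2.2} together with the identity (\ref{e2.1}) gives $|f|_{2_\alpha^*}^2\le S^{-1}|(-\Delta)^{\frac{\alpha}{2}}f|_2^2$. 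This already yields the multiplicative inequality — and hence Proposition \ref{p2.3} — but with $S^{-1}$ playing the role of $\mathcal{L}_{N,\alpha}$, i.e., with a non-sharp constant in place of $\frac{\alpha\Gamma(N/2)}{\Gamma(N/(2\alpha))}$.

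The step I expect to be the main obstacle is upgrading this to the sharp constant: $S$ is not the sharp logarithmic constant, because Jensen's inequality is never an equality for an $L^2$ function, so the sharp value cannot be read off from Sobolev at a fixed exponent. Identifying $\frac{\alpha\Gamma(N/2)}{\Gamma(N/(2\alpha))}$ requires working at the borderline — for instance via the logarithmic limit (as the integrability exponents tend to the endpoint) of the sharp fractional Hardy--Littlewood--Sobolev or Gagliardo--Nirenberg inequalities, together with a Legendre-type duality, or via a direct optimization of the fractional logarithmic Sobolev functional over its one-parameter extremal family followed by evaluation of the resulting Beta/Gamma integrals; for the precise computation we follow \cite{34}. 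We note that for the applications in this paper — bounding the logarithmic term in the energy functional and in the Palais--Smale analysis — any finite constant suffices, so the inequality obtained from Jensen and Proposition \ref{p2.2} is already enough.
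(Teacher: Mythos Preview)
The paper does not supply its own proof of Proposition~\ref{p2.3}: the result is quoted verbatim from \cite{34} (Cotsiolis--Tavoularis), as indicated by the citation in the proposition header, and no argument is given. So there is no ``paper's proof'' to compare against.

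Your proposal actually goes further than the paper. The reduction by homogeneity to $|f|_2=1$ and then to a single multiplicative inequality via optimization in $\sigma$ is correct, and your Jensen-plus-Sobolev argument is a clean and valid derivation of the multiplicative form with constant $S^{-1}$ in place of the sharp $\mathcal{L}_{N,\alpha}$. You also correctly identify (i) that Jensen at a fixed exponent cannot yield the sharp constant, (ii) that the sharp constant requires the limiting/extremal analysis carried out in \cite{34}, and (iii) that every use of Proposition~\ref{p2.3} in this paper---namely inequality~(\ref{e2.6}) and its consequences in Lemmas~\ref{l2.0}--\ref{l2.4} and Lemma~\ref{l3.1}---only needs \emph{some} finite constant, so your non-sharp version already suffices for the paper's purposes. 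In that sense your contribution is strictly more than what the paper itself provides for this statement.
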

\begin{remark}\label{r2.1}
From (\ref{e2.1}) and definition \ref{d2.1}, we have
  \begin{align}
  \displaystyle\int_{\mathbb{R}^N}|\omega(x,0)|^2\ln\frac{|\omega(x,0)|^2}{|\omega(x,0)|_2^2}dx &\leq\frac{\sigma^2}{\pi^\alpha}\displaystyle\int_{\mathbb{R}_+^{N+1}}y^{1-2\alpha}|\nabla\omega|^2dxdy\notag
  \\&~~~-\left[N+\frac{N}{\alpha}\ln\sigma+\ln\frac{\alpha\Gamma(\frac{N}{2})}{\Gamma(\frac{N}{2\alpha})}\right]|\omega(x,0)|_2^2 \notag
  \end{align}
  for any $\omega\in X^\alpha(\mathbb{R}_+^{N+1})$. Furthermore, there holds
  \begin{align}\label{e2.6}
 & \displaystyle\int_{\mathbb{R}^N}a(x)|\omega(x,0)|^2\ln|\omega(x,0)|dx\nonumber\\
 =& \frac{1}{2}\displaystyle\int_{\mathbb{R}^N}a(x)|\omega(x,0)|^2\ln|\omega(x,0)|^2dx\notag
 \\ =& \frac{1}{2}\displaystyle\int_{\mathbb{R}^N}a(x)|\omega(x,0)|^2\ln\frac{|\omega(x,0)|^2}{|\omega(x,0)|_2^2}dx+\frac{1}{2}\displaystyle\int_{\mathbb{R}^N}a(x)|\omega(x,0)|^2dx\ln|\omega(x,0)|_2^2\notag \\ \leq& \frac{1}{2}|a|_\infty\frac{\sigma^2}{\pi^\alpha}\displaystyle\int_{\mathbb{R}_+^{N+1}}y^{1-2\alpha}|\nabla\omega|^2dxdy+\frac{1}{2}|a|_\infty\left|N+\frac{N}{\alpha}\ln\sigma+\ln\frac{\alpha\Gamma(\frac{N}{2})}{\Gamma(\frac{N}{2\alpha})}\right||\omega(x,0)|_2^2\notag
\\ &+\frac{1}{2}\displaystyle\int_{\mathbb{R}^N}a(x)|\omega(x,0)|^2dx\ln|\omega(x,0)|_2^2.
  \end{align}
\end{remark}

To analyze (\ref{e2.2}),  we define the associated energy functional by
 \begin{align}
 I_\lambda(\omega):&=\frac{1}{2}\|\omega\|_X^2-\frac{\lambda}{2}\displaystyle\int_{\mathbb{R}^N}a(x)|\omega(x,0)|^2\ln|\omega(x,0)|dx\notag
 \\&~~~~+\frac{\lambda}{4}\displaystyle\int_{\mathbb{R}^N}a(x)|\omega(x,0)|^2dx-\frac{1}{2_\alpha^*}\displaystyle\int_{\mathbb{R}^N}b(x)|\omega(x,0)|^{2_\alpha^*}dx,\notag
\end{align}
where $\omega\in X^\alpha(\mathbb{R}_+^{N+1})$. Then $I_\lambda$ is Fr\'{e}chet differentiable and
 \begin{align}
 I'_\lambda(\omega) \varphi& = \displaystyle\int_{\mathbb{R}_+^{N+1}}y^{1-2\alpha}\nabla\omega(x,y)\nabla\varphi(x,y) dxdy+\displaystyle\int_{\mathbb{R}^N}\omega(x,0)\varphi(x,0)dx\notag
 \\&~~~~-\lambda\displaystyle\int_{\mathbb{R}^N}a(x)\omega(x,0)\varphi(x,0)\ln|\omega(x,0)|dx
 -\displaystyle\int_{\mathbb{R}^N}b(x)|\omega(x,0)|^{2_\alpha^*-2}\omega(x,0)\varphi(x,0)dx \notag
  \end{align}
for any $\varphi\in X^\alpha(\mathbb{R}_+^{N+1})$. It is notable that finding the weak solution of (\ref{e2.2}) is equivalent to finding the critical point of the energy functional $I_\lambda$.

Define
\begin{equation*}
\Phi:=\{\text{nontrivial~weak~solutions~of~(\ref{e2.2})}\}.
\end{equation*}
From (\ref{e2.1}) and Definition \ref{d2.1},  we define the ground energy of equation (\ref{e1.1}) by
\begin{equation*}
d:=\displaystyle\inf_{\omega\in\Phi}I_\lambda(\omega).
\end{equation*}
If $\omega$ is a nontrivial solution of system (\ref{e2.2}) such that $I_\lambda(\omega)=d$, we call that $u:=\omega(x,0)$ is a ground state solution of equation (\ref{e1.1}).

Since $I_\lambda$ is not bounded from below on $X^\alpha(\mathbb{R}_+^{N+1})$, we consider  $I_\lambda$ strictly on the Nehari manifold:
 \begin{equation*}
 N_\lambda:=\{\omega\in X^\alpha(\mathbb{R}_+^{N+1})\backslash\{0\};\, I'_\lambda(\omega)\omega=0\}.
\end{equation*}
 Then $\omega\in N_\lambda$ if and only if
 \begin{equation}\label{e2.8}
 \|\omega\|_X^2-\lambda\displaystyle\int_{\mathbb{R}^N}a(x)|\omega(x,0)|^2\ln|\omega(x,0)|dx-\displaystyle\int_{\mathbb{R}^N}b(x)|\omega(x,0)|^{2_\alpha^*}dx=0.
\end{equation}

We analyze $ N_\lambda$ in terms of the stationary points of fibrering maps \cite{4} that $\phi_\omega:\mathbb{R}^+\rightarrow \mathbb{R}$ is defined by
\begin{equation*}
 \phi_\omega(t):=I_\lambda(t\omega).
\end{equation*}
Then we have
 \begin{align}\label{eee2.9}
 &\phi_\omega(t)=\frac{t^2}{2}\|\omega\|_X^2-\frac{t^{2_\alpha^*}}{2_\alpha^*}\displaystyle\int_{\mathbb{R}^N}b(x)|\omega(x,0)|^{2_\alpha^*}dx
\\&-\lambda\frac{t^2}{2}\left(\displaystyle\int_{\mathbb{R}^N}a(x)|\omega(x,0)|^2\ln|\omega(x,0)|dx+\ln t\displaystyle\int_{\mathbb{R}^N}a(x)|\omega(x,0)|^2dx-\frac{1}{2}\displaystyle\int_{\mathbb{R}^N}a(x)|\omega(x,0)|^2dx\right),\notag
\end{align}
\begin{align}
 \phi'_\omega(t)=&t\|\omega\|_X^2-t^{2_\alpha^*-1}\displaystyle\int_{\mathbb{R}^N}b(x)|\omega(x,0)|^{2_\alpha^*}dx\notag
\\&-\lambda t\left(\displaystyle\int_{\mathbb{R}^N}a(x)|\omega(x,0)|^2\ln|\omega(x,0)|dx+\ln t\displaystyle\int_{\mathbb{R}^N}a(x)|\omega(x,0)|^2dx\right)\notag
\end{align}
and
\begin{align}
 &\phi''_\omega(t)=\|\omega\|_X^2-(2_\alpha^*-1)t^{2_\alpha^*-2}\displaystyle\int_{\mathbb{R}^N}b(x)|\omega(x,0)|^{2_\alpha^*}dx\notag
\\&-\lambda \left(\displaystyle\int_{\mathbb{R}^N}a(x)|\omega(x,0)|^2\ln|\omega(x,0)|dx+\ln t\displaystyle\int_{\mathbb{R}^N}a(x)|\omega(x,0)|^2dx+\displaystyle\int_{\mathbb{R}^N}a(x)|\omega(x,0)|^2dx\right).\notag
\end{align}
It is easy to see that $\omega\in N_\lambda$ if and only if $\phi'_\omega(1)=0.$

We split $N_\lambda$ into three subsets $N^+_\lambda$, $N^-_\lambda$, and $N^0_\lambda$ that correspond to local minima, local maxima, and points of inflection of fibrering maps respectively, i.e.
\begin{align}
&N^+_\lambda:=\{\omega\in N_\lambda;\phi''_\omega(1)>0\}=\{t\omega\in X^\alpha(\mathbb{R}_+^{N+1})\backslash\{0\};\, \phi'_\omega(t)=0,\phi''_\omega(t)>0\},\notag
\\&N^-_\lambda:=\{\omega\in N_\lambda;\phi''_\omega(1)<0\}=\{t\omega\in X^\alpha(\mathbb{R}_+^{N+1})\backslash\{0\};\, \phi'_\omega(t)=0,\phi''_\omega(t)<0\},\notag
\\&N^0_\lambda:=\{\omega\in N_\lambda;\phi''_\omega(1)=0\}=\{t\omega\in X^\alpha(\mathbb{R}_+^{N+1})\backslash\{0\};\, \phi'_\omega(t)=0,\phi''_\omega(t)=0\}.\notag
\end{align}
Note that if $\omega\in N_\lambda$, then $\phi''_\omega(1)=-\lambda\displaystyle\int_{\mathbb{R}^N}a(x)|\omega(x,0)|^2dx-(2_\alpha^*-2)\displaystyle\int_{\mathbb{R}^N}b(x)|\omega(x,0)|^{2_\alpha^*}dx$.
Thus, we get the equivalent expressions:
\begin{align}
&N^+_\lambda:=\{\omega\in N_\lambda;\, \lambda\displaystyle\int_{\mathbb{R}^N}a(x)|\omega(x,0)|^2dx+(2_\alpha^*-2)\displaystyle\int_{\mathbb{R}^N}b(x)|\omega(x,0)|^{2_\alpha^*}dx<0\},\notag
\\&N^-_\lambda:=\{\omega\in N_\lambda;\, \lambda\displaystyle\int_{\mathbb{R}^N}a(x)|\omega(x,0)|^2dx+(2_\alpha^*-2)\displaystyle\int_{\mathbb{R}^N}b(x)|\omega(x,0)|^{2_\alpha^*}dx>0\},\notag
\\&N^0_\lambda:=\{\omega\in N_\lambda;\, \lambda\displaystyle\int_{\mathbb{R}^N}a(x)|\omega(x,0)|^2dx+(2_\alpha^*-2)\displaystyle\int_{\mathbb{R}^N}b(x)|\omega(x,0)|^{2_\alpha^*}dx=0\}.\notag
\end{align}

\begin{lemma}\label{l2.0}
If $\displaystyle\int_{\mathbb{R}^N}a(x)|\omega(x,0)|^2dx\leq0$, then we have either
\begin{equation}\label{s2.1}
\|\omega\|_X\leq1,
 \end{equation}
 or
 \begin{equation*}
\displaystyle\int_{\mathbb{R}^N}a(x)|\omega(x,0)|^2\ln|\omega(x,0)|dx\leq C\|\omega\|_X^2
 \end{equation*}
 for some $C>0$ independent of $\omega\in X^\alpha \left(\mathbb{R}_+^{N+1}\right)$.
\end{lemma}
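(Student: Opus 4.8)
The plan is to combine the hypothesis $\int_{\mathbb{R}^N}a(x)|\omega(x,0)|^2\,dx\le0$ with the logarithmic estimate (\ref{e2.6}) of Remark \ref{r2.1}, and then to isolate and control the only term on the right-hand side of (\ref{e2.6}) that is not a priori dominated by $\|\omega\|_X^2$. Indeed, for any fixed $\sigma>0$ the first two terms in (\ref{e2.6}) are, respectively, $\frac{|a|_\infty\sigma^2}{2\pi^\alpha}\int_{\mathbb{R}_+^{N+1}}y^{1-2\alpha}|\nabla\omega|^2\,dxdy$ and a fixed constant times $|\omega(x,0)|_2^2$, and since $\|\omega\|_X^2=\int_{\mathbb{R}_+^{N+1}}y^{1-2\alpha}|\nabla\omega|^2\,dxdy+|\omega(x,0)|_2^2$, their sum is at most $C_1\|\omega\|_X^2$ with $C_1$ depending only on $N,\alpha,|a|_\infty$ (once $\sigma$ is chosen). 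So everything reduces to bounding
\[
T:=\tfrac12\Big(\int_{\mathbb{R}^N}a(x)|\omega(x,0)|^2\,dx\Big)\ln|\omega(x,0)|_2^2
\]
from above by a constant that does not depend on $\omega$.

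To estimate $T$ I would split into two cases according to the size of $|\omega(x,0)|_2$. If $|\omega(x,0)|_2\ge1$, then $\ln|\omega(x,0)|_2^2\ge0$, and since $\int_{\mathbb{R}^N}a(x)|\omega(x,0)|^2\,dx\le0$ we immediately get $T\le0$. If $|\omega(x,0)|_2<1$, then both factors defining $T$ are nonpositive, so $T\ge0$ and, using $\big|\int_{\mathbb{R}^N}a(x)|\omega(x,0)|^2\,dx\big|\le|a|_\infty|\omega(x,0)|_2^2$,
\[
0\le T\le\tfrac12|a|_\infty\,|\omega(x,0)|_2^2\,\ln\tfrac{1}{|\omega(x,0)|_2^2}=\tfrac12|a|_\infty\,g\big(|\omega(x,0)|_2^2\big),\qquad g(s):=s\ln\tfrac1s .
\]
The elementary function $g$ attains its maximum $1/e$ on $(0,1]$, so $T\le\frac{|a|_\infty}{2e}=:C_2$ in this case too. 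Thus $T\le C_2$ for every admissible $\omega$, and (\ref{e2.6}) yields
\[
\int_{\mathbb{R}^N}a(x)|\omega(x,0)|^2\ln|\omega(x,0)|\,dx\le C_1\|\omega\|_X^2+C_2 .
\]

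It remains only to invoke the dichotomy stated in the lemma. If $\|\omega\|_X\le1$, then (\ref{s2.1}) holds and there is nothing more to do; if $\|\omega\|_X>1$, then $C_2\le C_2\|\omega\|_X^2$, and the last display gives $\int_{\mathbb{R}^N}a(x)|\omega(x,0)|^2\ln|\omega(x,0)|\,dx\le(C_1+C_2)\|\omega\|_X^2$, i.e. the second alternative with $C:=C_1+C_2$, a constant independent of $\omega$. I expect the only genuinely non-routine point to be the realization that the $L^2$-normalization $\ln\frac{|\omega(x,0)|^2}{|\omega(x,0)|_2^2}$ in the fractional logarithmic Sobolev inequality produces, in the low-norm regime $|\omega(x,0)|_2<1$, a term of the shape $s\ln(1/s)$; it is exactly here that the sign condition $\int_{\mathbb{R}^N}a(x)|\omega(x,0)|^2\,dx\le0$ is used, and the boundedness of $g(s)=s\ln(1/s)$ on $(0,1]$ downgrades that contribution to a harmless additive constant, which the case $\|\omega\|_X>1$ then absorbs into $\|\omega\|_X^2$. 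Everything else is routine bookkeeping with the $X$-norm.
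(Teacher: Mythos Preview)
Your proof is correct and takes a genuinely different route from the paper's. The paper does not invoke the logarithmic Sobolev estimate (\ref{e2.6}) in the proof of this lemma at all; instead it writes
\[
\ln|\omega(x,0)|=\ln\frac{|\omega(x,0)|}{\|\omega\|_X}+\ln\|\omega\|_X,
\]
uses the sign hypothesis to discard the second contribution when $\|\omega\|_X>1$, and then splits the remaining integral $I_1=\int a|\omega|^2\ln\frac{|\omega|}{\|\omega\|_X}$ according to the sign of $a(x)$, bounding each piece via the elementary pointwise inequalities $\ln t\le C_\gamma t^\gamma$ and $\ln(1/t)\le C_{\gamma_0} t^{-\gamma_0}$ together with the embedding of Proposition~\ref{p2.1}. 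Your argument instead normalizes by $|\omega(x,0)|_2$ through (\ref{e2.6}), which already packages the log-Sobolev part as $C_1\|\omega\|_X^2$ plus the single cross term $T$; the sign hypothesis and the boundedness of $s\mapsto s\ln(1/s)$ on $(0,1]$ then dispose of $T$ in two lines, after which the dichotomy $\|\omega\|_X\le1$ versus $\|\omega\|_X>1$ absorbs the additive constant. Your route is shorter and avoids the sign-of-$a$ case split, at the price of relying on (\ref{e2.6}); the paper's route is more self-contained for this particular lemma and makes explicit the interpolation mechanism (the auxiliary exponents $q\in(2,2_\alpha^*)$ and $2-\gamma_0$) that reappears in later estimates.
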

\begin{proof}
To estimate $\displaystyle\int_{\mathbb{R}^N}a(x)|\omega(x,0)|^2\ln|\omega(x,0)|dx$, we re-write it as
\begin{align*}\label{s2.13}
&\displaystyle\int_{\mathbb{R}^N}a(x)|\omega(x,0)|^2\ln|\omega(x,0)|dx\notag
\\=& \displaystyle\int_{\mathbb{R}^N}a(x)|\omega(x,0)|^2\ln\frac{|\omega(x,0)|}{\|\omega\|_X}dx+\ln\|\omega\|_X\displaystyle\int_{\mathbb{R}^N}a(x)|\omega(x,0)|^2dx\notag
\\=& I_1+I_2,
\end{align*}
where $I_1:=\displaystyle\int_{\mathbb{R}^N}a(x)|\omega(x,0)|^2\ln\frac{|\omega(x,0)|}{\|\omega\|_X}dx$ and $I_2:=\ln\|\omega\|_X\displaystyle\int_{\mathbb{R}^N}a(x)|\omega(x,0)|^2dx$.

If $\|\omega\|_X\leq1$, then (\ref{s2.1}) holds. If $\|\omega\|_X>1$, due to
$\displaystyle\int_{\mathbb{R}^N}a(x)|\omega(x,0)|^2dx\leq0$
we have $I_2\leq0.$
This implies
 \begin{equation}\label{s2.15}
 \displaystyle\int_{\mathbb{R}^N}a(x)|\omega(x,0)|^2\ln|\omega(x,0)|dx\leq I_1.
 \end{equation}

We divide $I_1$ into two parts:
$I_1=I_{11}+I_{12},$
where
$$I_{11}=\displaystyle\int_{\mathbb{R}_{a,+}^N}a(x)|\omega(x,0)|^2\ln
\frac{|\omega(x,0)|}{\|\omega\|_X}dx,\ \ I_{12}= +\displaystyle\int_{\mathbb{R}_{a,-}^N}a(x)|\omega(x,0)|^2\ln\frac{|\omega(x,0)|}{\|\omega\|_X}dx,$$
$$\mathbb{R}_{a,+}^N:=\left\{x\in\mathbb{R}^N;\, a(x)\geq0 \right\},\ \ \mathbb{R}_{a,-}^N:=\left\{x\in\mathbb{R}^N;\, a(x)<0\right\}.$$

When $t,\gamma>0$, by using $\ln t\leq C_\gamma t^\gamma$, it follows from Proposition \ref{p2.1} that
  \begin{equation}\label{s2.17}
 I_{11}\leq C\|\omega\|_X^{2-q}\displaystyle\int_{\mathbb{R}_{a,+}^N}a(x)|\omega(x,0)|^qdx\leq C\|\omega\|_X^2
 \end{equation}
 for  $2<q<2_\alpha^*$,
 and
 \begin{align}\label{s2.18}
I_{12}&\leq\displaystyle\int_{\Omega_{a,-}}a(x)|\omega(x,0)|^2\ln\frac{|\omega(x,0)|}{\|\omega\|_X}dx\notag
\\&=\displaystyle\int_{\Omega_{a,-}}(-a(x))|\omega(x,0)|^2\ln\frac{\|\omega\|_X}{|\omega(x,0)|}dx\notag
\\&\leq C_{\gamma_0}\|\omega\|_X^{\gamma_0}\displaystyle\int_{\mathbb{R}^N}|a(x)||\omega(x,0)|^{2-\gamma_0}dx\notag
\\&\leq C\|\omega\|_X^2
\end{align}
for  $0<\gamma_0<1$, where $\Omega_{a,-}:=\{x\in\mathbb{R}_{a,-}^N;|\omega(x,0)|<\|\omega\|_X\}$.

As a consequence of (\ref{s2.15})-(\ref{s2.18}), we obtain
\begin{equation*}
\displaystyle\int_{\mathbb{R}^N}a(x)|\omega(x,0)|^2\ln|\omega(x,0)|dx\leq C\|\omega\|_X^2,
 \end{equation*}
 where $C$ is a positive constant independent of $\omega\in  X^\alpha(\mathbb{R}_+^{N+1})$.
\end{proof}


\begin{lemma}\label{l2.1}
For each $\omega\in X^\alpha(\mathbb{R}_+^{N+1})\backslash\{0\}$, there exists  $\lambda_1>0$ small enough such that if $\lambda\in(0,\lambda_1)$,  then the following two statements are true.
\begin{description}
  \item[(i)] If $\displaystyle\int_{\mathbb{R}^N}a(x)|\omega(x,0)|^2dx>0$, then there exists   $t^-:=t^-(\omega)>0$ such that $t^-\omega\in N^-_\lambda$ and $I_\lambda(t^-\omega)=\displaystyle\max_{t\geq0}I_\lambda(t\omega)$.
  \item[(ii)] If $\displaystyle\int_{\mathbb{R}^N}a(x)|\omega(x,0)|^2dx<0$, then there exists a unique $0<t^+:=t^+(\omega)<t^-:=t^-(\omega)<\infty$ such that $t^+\omega\in N^+_\lambda$,\, $t^-\omega\in N^-_\lambda$,\, $I_\lambda(t\omega)$ is decreasing on $(0,t^+)$, increasing on $(t^+,t^-)$, and decreasing on $(t^-,+\infty)$. Moreover, $I_\lambda(t^+\omega)=\displaystyle\min_{0\leq t\leq t^-}I_\lambda(t\omega)$ and $I_\lambda(t^-\omega)=\displaystyle\max_{t^+\leq t}I_\lambda(t\omega)$.
\end{description}
\end{lemma}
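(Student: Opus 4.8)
\emph{Proof strategy.} The plan is to reduce everything to the behaviour of the scalar function $g_\omega:(0,\infty)\to\mathbb{R}$ defined by
\[
g_\omega(t):=\frac{\phi'_\omega(t)}{t}=\|\omega\|_X^2-B\,t^{2_\alpha^*-2}-\lambda\bigl(C+D\ln t\bigr),
\]
where I abbreviate $B:=\int_{\mathbb{R}^N}b(x)|\omega(x,0)|^{2_\alpha^*}dx$, $C:=\int_{\mathbb{R}^N}a(x)|\omega(x,0)|^2\ln|\omega(x,0)|dx$ and $D:=\int_{\mathbb{R}^N}a(x)|\omega(x,0)|^2dx$, and where $B>0$ (the relevant case, $b$ being positive near $M$). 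Since $\phi'_\omega(t)=t\,g_\omega(t)$, the positive critical points of $\phi_\omega$ are exactly the zeros of $g_\omega$; moreover $\phi''_\omega(t)=g_\omega(t)+t\,g'_\omega(t)$, so at a zero $t$ of $g_\omega$ one has $\phi''_\omega(t)=t\,g'_\omega(t)$, and hence $t\omega\in N^+_\lambda$ (resp. $N^-_\lambda$) precisely when $g'_\omega(t)>0$ (resp. $g'_\omega(t)<0$); the sign of $\phi'_\omega$ itself is the sign of $g_\omega$, which will deliver all the monotonicity claims. I will then run elementary calculus on $g_\omega$ in each case, using $g'_\omega(t)=-(2_\alpha^*-2)B\,t^{2_\alpha^*-3}-\lambda D/t$ and the fact that $2_\alpha^*-2>0$.

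For (i), $D>0$, so $g'_\omega<0$ on $(0,\infty)$ and $g_\omega$ is strictly decreasing; as $t\to0^+$ the term $Bt^{2_\alpha^*-2}\to0$ while $-\lambda D\ln t\to+\infty$, so $g_\omega(0^+)=+\infty$, and $g_\omega(+\infty)=-\infty$. Thus $g_\omega$ has a unique zero $t^-=t^-(\omega)>0$, with $g'_\omega(t^-)<0$, i.e. $\phi''_\omega(t^-)<0$ and $t^-\omega\in N^-_\lambda$. Since $\phi'_\omega=t\,g_\omega>0$ on $(0,t^-)$ and $<0$ on $(t^-,\infty)$ and $\phi_\omega(t)\to-\infty$ (because $B>0$), this gives $I_\lambda(t^-\omega)=\max_{t\geq0}I_\lambda(t\omega)$. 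No smallness of $\lambda$ is needed here.

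For (ii), $D<0$, so $g'_\omega(t)=-(2_\alpha^*-2)B\,t^{2_\alpha^*-3}+\lambda|D|/t$ vanishes at exactly one point $t_*=\left(\frac{\lambda|D|}{(2_\alpha^*-2)B}\right)^{1/(2_\alpha^*-2)}$, being positive on $(0,t_*)$ and negative on $(t_*,\infty)$; hence $g_\omega$ increases on $(0,t_*)$, decreases on $(t_*,\infty)$, with $g_\omega(0^+)=g_\omega(+\infty)=-\infty$. Therefore $g_\omega$ has two zeros iff $g_\omega(t_*)>0$. Using $B\,t_*^{2_\alpha^*-2}=\lambda|D|/(2_\alpha^*-2)$ one computes $g_\omega(t_*)=\|\omega\|_X^2-\frac{\lambda|D|}{2_\alpha^*-2}-\lambda C-\lambda D\ln t_*$, and since $\lambda\ln t_*\to0$ and $\lambda C,\lambda D\to0$ as $\lambda\to0^+$, there is $\lambda_1>0$ with $g_\omega(t_*)>0$ for $\lambda\in(0,\lambda_1)$. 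For such $\lambda$ the zeros $0<t^+<t_*<t^-<\infty$ are unique, $g'_\omega(t^+)>0$ and $g'_\omega(t^-)<0$ give $t^+\omega\in N^+_\lambda$, $t^-\omega\in N^-_\lambda$, and $g_\omega<0$ on $(0,t^+)$, $>0$ on $(t^+,t^-)$, $<0$ on $(t^-,\infty)$, so $\phi'_\omega=t\,g_\omega$ has the same signs; hence $I_\lambda(t\omega)$ is decreasing on $(0,t^+)$, increasing on $(t^+,t^-)$, decreasing on $(t^-,\infty)$, and reading off extrema yields $I_\lambda(t^+\omega)=\min_{0\leq t\leq t^-}I_\lambda(t\omega)$ and $I_\lambda(t^-\omega)=\max_{t^+\leq t}I_\lambda(t\omega)$.

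The main obstacle is the case-(ii) inequality $g_\omega(t_*)>0$, i.e. pinning down $\lambda_1$: one must control $C=\int a|\omega|^2\ln|\omega|$, $D=\int a|\omega|^2$ and the quantity $\lambda\ln t_*$ in terms of $\|\omega\|_X$. This is exactly where the fractional logarithmic Sobolev inequality (Proposition \ref{p2.3} / Remark \ref{r2.1}), Lemma \ref{l2.0}, and the Sobolev inequality (Proposition \ref{p2.2}, used to bound $B$) come in; if $\lambda_1$ is to be taken independent of $\omega$ one normalizes $\|\omega\|_X=1$ and applies these bounds uniformly. A secondary, pervasive nuisance is that $-\lambda D\ln t$ is unbounded as $t\to0^+$ and as $t\to\infty$, so every limiting value of $g_\omega$ must be justified by weighing this term against a genuine power of $t$ rather than discarding it as lower order.
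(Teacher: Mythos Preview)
Your proof is correct and follows essentially the same route as the paper: both reduce the problem to the scalar function $g_\omega(t)=\phi'_\omega(t)/t$ (the paper writes this as $s(t)=\|\omega\|_X^2-\lambda C-g_\omega(t)$ and looks for solutions of $s(t)=\|\omega\|_X^2-\lambda C$), and the decisive inequality in part~(ii), your $g_\omega(t_*)>0$, is exactly the paper's $s(t_{\min})<\|\omega\|_X^2-\lambda\int a|\omega|^2\ln|\omega|$. For part~(i) your argument is in fact a bit sharper than the paper's, since the strict monotonicity of $g_\omega$ also yields uniqueness of $t^-$, which the paper does not claim.

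The one substantive difference is how the threshold $\lambda_1$ is obtained in part~(ii). Your main argument (fix $\omega$, let $\lambda\to0$, observe $g_\omega(t_*)\to\|\omega\|_X^2>0$) gives $\lambda_1=\lambda_1(\omega)$, which is all the lemma literally asserts. The paper, however, proves---and later needs---$\lambda_1$ independent of $\omega$, and secures this through a case distinction: Lemma~\ref{l2.0} for $\|\omega\|_X>1$, the logarithmic Sobolev inequality (Remark~\ref{r2.1}) for $\|\omega\|_X\le1$, together with a rescaling trick to reduce the sub-case $\lambda|D|<(2_\alpha^*-2)B$ to the complementary one. Your closing sketch (normalize $\|\omega\|_X=1$ and bound $C,D,B$ via Propositions~\ref{p2.1}--\ref{p2.3}) is a legitimate alternative: with $\|\omega\|_X=1$ one has $|D|\le C_1$, $C\le C_2$ uniformly, and the residual term $\tfrac{\lambda|D|}{2_\alpha^*-2}\ln\!\bigl(\tfrac{\lambda|D|}{(2_\alpha^*-2)B}\bigr)$ is bounded below by $-o_\lambda(1)$ uniformly in $\omega$ (split it as $\mu\ln\mu-\mu\ln B$ with $\mu=\tfrac{\lambda|D|}{2_\alpha^*-2}\le C\lambda$; the first piece is $o_\lambda(1)$ and the second is either nonnegative or $\ge -\mu\ln B_{\max}$). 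So both routes close; the paper's is more explicit, yours more compact.
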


\begin{proof}
$\textbf{(i)}$. Suppose that $\omega\in X^\alpha(\mathbb{R}_+^{N+1})\backslash\{0\}$ with $\displaystyle\int_{\mathbb{R}^N}a(x)|\omega(x,0)|^2dx>0$. Since $2<2_\alpha^*$ and
$\displaystyle\lim_{t\rightarrow 0^+}\ln t=-\infty,$
there exists $t_0>0$ small enough such that
 \begin{equation}\label{ee2.17}
\phi_\omega(t)>0
\end{equation}
for $t\in(0,t_0)$, where $\phi_\omega(t)$ is defined by (\ref{eee2.9}). Moreover, we have
\begin{equation}\label{ee2.18}
\displaystyle\lim_{t\rightarrow 0^+}\phi_\omega(t)=0~\text{and}~\displaystyle\lim_{t\rightarrow +\infty}\phi_\omega(t)=-\infty.
\end{equation}
From  (\ref{ee2.17}) with  (\ref{ee2.18}), there is  $t^-:=t^-(\omega)>0$ such that
\begin{equation*}
\phi_\omega(t^-)=I_\lambda(t^-\omega)=\displaystyle\max_{t\geq0}\phi_\omega(t)=\displaystyle\max_{t\geq0}I_\lambda(t\omega).
\end{equation*}
This implies $t^-\omega\in N^-_\lambda$.

$\textbf{(ii)}$. Suppose that $\omega\in X^\alpha(\mathbb{R}_+^{N+1})\backslash\{0\}$ with $\displaystyle\int_{\mathbb{R}^N}a(x)|\omega(x,0)|^2dx<0$.
Note that
\begin{align}
\frac{\phi'_\omega(t)}{t}&=\|\omega\|_X^2-\lambda\displaystyle\int_{\mathbb{R}^N}a(x)|\omega(x,0)|^2\ln|\omega(x,0)|dx\notag
 \\&~~~-\lambda\ln t\displaystyle\int_{\mathbb{R}^N}a(x)|\omega(x,0)|^2dx-t^{2_\alpha^*-2}\displaystyle\int_{\mathbb{R}^N}b(x)|\omega(x,0)|^{2_\alpha^*}dx.\notag
\end{align}

Let $s(t):=\lambda\ln t\displaystyle\int_{\mathbb{R}^N}a(x)|\omega(x,0)|^2dx+t^{2_\alpha^*-2}
\displaystyle\int_{\mathbb{R}^N}b(x)|\omega(x,0)|^{2_\alpha^*}dx.$
Then $t\omega\in N_\lambda$ if and only if
\begin{equation*}
s(t)=\|\omega\|_X^2-\lambda\displaystyle\int_{\mathbb{R}^N}a(x)|\omega(x,0)|^2\ln|\omega(x,0)|dx.
\end{equation*}

Since
\begin{equation}\label{ee2.19}
\displaystyle\lim_{t\rightarrow 0^+}s(t)=+\infty~\text{and}~\displaystyle\lim_{t\rightarrow +\infty}s(t)=+\infty
\end{equation}
and
\begin{equation}\label{ee2.20}
ts'(t)=\lambda\displaystyle\int_{\mathbb{R}^N}a(x)|\omega(x,0)|^2dx+(2_\alpha^*-2)t^{2_\alpha^*-2}\displaystyle\int_{\mathbb{R}^N}b(x)|\omega(x,0)|^{2_\alpha^*}dx,
\end{equation}
there exists
\begin{equation*}
t_{\min}:=\left(\frac{-\lambda\displaystyle\int_{\mathbb{R}^N}a(x)|\omega(x,0)|^2dx}{(2_\alpha^*-2)
\displaystyle\int_{\mathbb{R}^N}b(x)|\omega(x,0)|^{2_\alpha^*}dx}\right)^\frac{1}{2_\alpha^*-2}
\end{equation*}
such that
\begin{align}
&s(t_{\min})=\displaystyle\min_{t\geq0}s(t)\notag
\\&=\frac{1}{2_\alpha^*-2}\lambda\displaystyle\int_{\mathbb{R}^N}a(x)|\omega(x,0)|^2dx\ln\left(\frac{-\lambda\displaystyle\int_{\mathbb{R}^N}a(x)|\omega(x,0)|^2dx}{(2_\alpha^*-2)\displaystyle\int_{\mathbb{R}^N}b(x)|\omega(x,0)|^{2_\alpha^*}dx}\right)\notag
\\&~~~~-\frac{1}{2_\alpha^*-2}\lambda\displaystyle\int_{\mathbb{R}^N}a(x)|\omega(x,0)|^2dx.\notag
\end{align}
Moreover, $s(t)$ is decreasing in $(0,t_{\min})$ and increasing in $(t_{\min},+\infty)$.

To show that
\begin{equation}\label{ee2.21}
s(t_{\min})<\|\omega\|_X^2-\lambda\displaystyle\int_{\mathbb{R}^N}a(x)|\omega(x,0)|^2\ln|\omega(x,0)|dx,
\end{equation}
we start with estimating $\displaystyle\int_{\mathbb{R}^N}a(x)|\omega(x,0)|^2\ln|\omega(x,0)|dx$.  It follows from Lemma \ref{l2.0} that either
\begin{equation}\label{ee2.22}
\|\omega\|_X\leq 1
 \end{equation}
 or
 \begin{equation}\label{ee2.23}
\displaystyle\int_{\mathbb{R}^N}a(x)|\omega(x,0)|^2\ln|\omega(x,0)|dx\leq C\|\omega\|_X^2,
 \end{equation}
 for some $C>0$ independent of $\omega$. Thus we need to consider two cases.

 \textbf{Case 1}. Assume that (\ref{ee2.22}) holds. On the one hand, it follows from (\ref{e2.6}) and Proposition \ref{p2.1}  that
  \begin{equation*}
\displaystyle\int_{\mathbb{R}^N}a(x)|\omega(x,0)|^2\ln|\omega(x,0)|dx\leq C(\|\omega\|_X^2+\|\omega\|_X^4)\leq C\|\omega\|_X^2
 \end{equation*}
for  some $C>0$ independent of $\omega$. So we have
 \begin{equation}\label{ee2.25}
\|\omega\|_X^2-\lambda\displaystyle\int_{\mathbb{R}^N}a(x)|\omega(x,0)|^2\ln|\omega(x,0)|dx\geq C\|\omega\|_X^2
 \end{equation}
for $\lambda>0$ small enough and some $C>0$ independent of $\omega$.

On the other hand, in view of the inequality $\ln t\leq t$ for $t>0$, it follows from Proposition \ref{p2.1} and (\ref{ee2.22}) that
\begin{align}\label{ee2.26}
&s(t_{\min})\notag
\\=&\frac{1}{2_\alpha^*-2}\lambda\displaystyle\int_{\mathbb{R}^N}a(x)|\omega(x,0)|^2dx\ln\left(-\lambda\displaystyle\int_{\mathbb{R}^N}a(x)|\omega(x,0)|^2dx\right)\notag
\\&-\frac{1}{2_\alpha^*-2}\lambda\displaystyle\int_{\mathbb{R}^N}a(x)|\omega(x,0)|^2dx\ln\left((2_\alpha^*-2)\displaystyle\int_{\mathbb{R}^N}b(x)|\omega(x,0)|^{2_\alpha^*}dx\right)\notag
\\&-\frac{1}{2_\alpha^*-2}\lambda\displaystyle\int_{\mathbb{R}^N}a(x)|\omega(x,0)|^2dx\notag
\\ \leq& \frac{-\lambda}{2_\alpha^*-2}\displaystyle\int_{\mathbb{R}^N}a(x)|\omega(x,0)|^2dx\left[-\lambda\displaystyle\int_{\mathbb{R}^N}a(x)|\omega(x,0)|^2dx+(2_\alpha^*-2)\displaystyle\int_{\mathbb{R}^N}b(x)|\omega(x,0)|^{2_\alpha^*}dx+1\right]\notag\\
\leq& \lambda C\|\omega\|_X^2[\lambda C\|\omega\|_X^2+C\|\omega\|_X^{2_\alpha^*}+1]\notag\\
\leq& \lambda C\|\omega\|_X^2
\end{align}
for  some $C>0$ independent of $\omega$.  As a consequence of (\ref{ee2.25}) and (\ref{ee2.26}), we see that (\ref{ee2.21}) holds for $\lambda>0$ small enough.

From (\ref{ee2.19})-(\ref{ee2.21}),  there exists a unique $0<t^+(\omega)<t_{\min}<t^-(\omega)<\infty$ such that
\begin{equation*}
s(t^+(\omega))=s(t^-(\omega))=\|\omega\|_X^2-\lambda\displaystyle\int_{\mathbb{R}^N}a(x)|\omega(x,0)|^2\ln|\omega(x,0)|dx
\end{equation*}
 and
 \begin{equation*}
t^+(\omega)\omega\in N_\lambda~\text{and}~t^-(\omega)\omega\in N_\lambda.
 \end{equation*}
Since
\begin{equation*}
s'(t^+(\omega))<0<s'(t^-(\omega)),
 \end{equation*}
it follows from (\ref{ee2.20}) that $t^+(\omega)\omega\in N^+_\lambda$ and $t^-(\omega)\omega\in N^-_\lambda.$

Using the fact that
\begin{displaymath}
s(t)-\|\omega\|_X^2-\lambda\displaystyle\int_{\mathbb{R}^N}a(x)|\omega(x,0)|^2\ln|\omega(x,0)|dx\left\{\begin{array}{ll}
\geq0,~ 0\leq t\leq t^+(\omega),\\
\leq0,~ t^+(\omega)\leq t\leq t^-(\omega),\\
\geq0,~t^-(\omega)\leq t,
\end{array}\right.
\end{displaymath}
we obtain
\begin{displaymath}
\phi'_\omega(t)\left\{\begin{array}{ll}
\leq0,~\, 0\leq t\leq t^+(\omega),\\
\geq0,~\, t^+(\omega)\leq t\leq t^-(\omega),\\
\leq0,\, ~t^-(\omega)\leq t.
\end{array}\right.
\end{displaymath}
This indicates that $I_\lambda(t\omega)$ is decreasing on $(0,t^+(\omega))$, increasing on $(t^+(\omega),t^-(\omega))$ and decreasing on $(t^-(\omega),\infty)$. Moreover,
we have
\begin{equation*}
I_\lambda(t^+(\omega)\omega)=\displaystyle\min_{0\leq t\leq t^-(\omega)}I_\lambda(t\omega)~\text{and}~ I_\lambda(t^-\omega)=\displaystyle\max_{t\geq t^+(\omega)}I_\lambda(t\omega).
\end{equation*}

 \textbf{Case 2}. Assume that (\ref{ee2.23}) holds. Then
  \begin{equation}\label{ee2.27}
\|\omega\|_X^2-\lambda\displaystyle\int_{\mathbb{R}^N}a(x)|\omega(x,0)|^2\ln|\omega(x,0)|dx\geq C\|\omega\|_X^2
 \end{equation}
for $\lambda>0$ small enough and   some $C>0$ independent of $\omega$.

If $-\lambda\displaystyle\int_{\mathbb{R}^N}a(x)|\omega(x,0)|^2dx\geq(2_\alpha^*-2)\displaystyle\int_{\mathbb{R}^N}b(x)|\omega(x,0)|^{2_\alpha^*}dx$, we have
 \begin{equation*}
\frac{\lambda}{2_\alpha^*-2}\displaystyle\int_{\mathbb{R}^N}a(x)|\omega(x,0)|^2dx\ln\left(\frac{-\lambda\displaystyle\int_{\mathbb{R}^N}a(x)|\omega(x,0)|^2dx}{(2_\alpha^*-2)
\displaystyle\int_{\mathbb{R}^N}b(x)|\omega(x,0)|^{2_\alpha^*}dx}\right)\leq0.
 \end{equation*}
That is,
 \begin{equation}\label{ee2.28}
s(t_{\min})\leq\frac{-\lambda}{2_\alpha^*-2}\displaystyle\int_{\mathbb{R}^N}a(x)|\omega(x,0)|^2dx\leq\lambda C\|\omega\|_X^2
 \end{equation}
for some $C>0$ independent of $\omega$. From (\ref{ee2.27}) and (\ref{ee2.28}), we arrive at the desired result (\ref{ee2.21}) for $\lambda>0$ small enough.

Due to (\ref{ee2.21}),  similar to the proof of Case 1, there exist $0<t^+(\omega)<t_{\min}<t^-(\omega)<\infty$ such that  $t^+(\omega)\omega\in N^+_\lambda$ and $t^-(\omega)\omega\in N^-_\lambda.$  Furthermore, we can see that $I_\lambda(t\omega)$ is decreasing on $(0,t^+(\omega))$, increasing on $(t^+(\omega),t^-(\omega))$ and decreasing on $(t^-(\omega),\infty)$. So we have $I_\lambda(t^+(\omega)\omega)=\displaystyle\min_{0\leq t\leq t^-(\omega)}I_\lambda(t\omega)$ and $I_\lambda(t^-\omega)=\displaystyle\max_{t\geq t^+(\omega)}I_\lambda(t\omega)$.

If $-\lambda\displaystyle\int_{\mathbb{R}^N}a(x)|\omega(x,0)|^2dx<(2_\alpha^*-2)\displaystyle\int_{\mathbb{R}^N}b(x)|\omega(x,0)|^{2_\alpha^*}dx$, since $2<2_\alpha^*$,  there exists $t_0>0$ such that
\begin{equation*}
-\lambda\displaystyle\int_{\mathbb{R}^N}a(x)|t_0\omega(x,0)|^2dx>(2_\alpha^*-2)\displaystyle\int_{\mathbb{R}^N}b(x)|t_0\omega(x,0)|^{2_\alpha^*}dx.
 \end{equation*}

Set
 \begin{equation*}
\omega_0=t_0\omega.
 \end{equation*}
Similarly,  we can see that there are $0<t^+(\omega_0)<t^-(\omega_0)<\infty$ such that the desired result in Case 1 holds for some $\omega_0$.
Let $t^+(\omega)=t_0t^+(\omega_0)\ \text{and} \ t^-(\omega)=t_0t^-(\omega_0).$
Consequently, there exist $0<t^+(\omega)<t^-(\omega)<\infty$ such that the result in Case 1 holds for an arbitrary $\omega$.
\end{proof}

\begin{lemma}\label{l2.2}
If $\omega$ is a critical point of $I_\lambda$ on $N_\lambda$ and $\omega\not\in N^0_\lambda$, then it is a critical point of $I_\lambda$ in $X^\alpha(\mathbb{R}_+^{N+1})$.
\end{lemma}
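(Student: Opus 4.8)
The plan is to obtain the conclusion from the classical Lagrange multiplier rule applied to the constraint defining $N_\lambda$. Introduce the $C^1$ functional
$$ G(\omega):=I'_\lambda(\omega)\omega=\|\omega\|_X^2-\lambda\int_{\mathbb{R}^N}a(x)|\omega(x,0)|^2\ln|\omega(x,0)|\,dx-\int_{\mathbb{R}^N}b(x)|\omega(x,0)|^{2_\alpha^*}\,dx, $$
cf. (\ref{e2.8}), so that $N_\lambda=\{\omega\in X^\alpha(\mathbb{R}_+^{N+1})\setminus\{0\};\, G(\omega)=0\}$. Here $G$ is $C^1$ because $I_\lambda$ is $C^1$ (which in turn relies on the logarithmic Sobolev inequality of Proposition \ref{p2.3} and Remark \ref{r2.1}, together with the embeddings of Proposition \ref{p2.1}, to control the singular behaviour of $t\mapsto t^2\ln|t|$ near $t=0$). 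Note also that $G(t\omega)=t\,\phi'_\omega(t)$ for $t>0$.

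First I would verify that $N_\lambda$ is, near $\omega$, a $C^1$ submanifold of codimension one, i.e. that $G'(\omega)\ne 0$ in $\left(X^\alpha(\mathbb{R}_+^{N+1})\right)^*$. Differentiating $t\mapsto G(t\omega)=t\,\phi'_\omega(t)$ at $t=1$ and using $\omega\in N_\lambda$ (so $\phi'_\omega(1)=G(\omega)=0$), one finds
$$ G'(\omega)\omega=\phi''_\omega(1)=-\lambda\int_{\mathbb{R}^N}a(x)|\omega(x,0)|^2\,dx-(2_\alpha^*-2)\int_{\mathbb{R}^N}b(x)|\omega(x,0)|^{2_\alpha^*}\,dx, $$
which is precisely the quantity appearing in the definition of $N^0_\lambda$. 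Since by hypothesis $\omega\notin N^0_\lambda$, we have $\phi''_\omega(1)\ne0$, and therefore $G'(\omega)\ne0$.

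Second, since $\omega$ is a critical point of the restriction $I_\lambda|_{N_\lambda}$ and $G'(\omega)\ne0$, the Lagrange multiplier theorem gives a number $\mu\in\mathbb{R}$ with $I'_\lambda(\omega)=\mu\,G'(\omega)$ in $\left(X^\alpha(\mathbb{R}_+^{N+1})\right)^*$. Testing this identity against $\omega$ and again using $I'_\lambda(\omega)\omega=0$ yields $0=\mu\,G'(\omega)\omega=\mu\,\phi''_\omega(1)$, and since $\phi''_\omega(1)\ne0$ we must have $\mu=0$. Hence $I'_\lambda(\omega)=0$, that is, $\omega$ is a critical point of $I_\lambda$ on all of $X^\alpha(\mathbb{R}_+^{N+1})$.

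The one point that needs real care — and what I would regard as the main technical obstacle — is the $C^1$ regularity (hence the legitimacy of both the Lagrange rule and the differentiation step that identifies $G'(\omega)\omega$ with $\phi''_\omega(1)$) in the presence of the logarithmic term, whose pointwise derivative blows up at $0$; this is dispatched exactly as in the argument already used to show $I_\lambda$ is Fréchet differentiable, i.e. via Proposition \ref{p2.3}, Remark \ref{r2.1} and Proposition \ref{p2.1}. Everything else is the standard Nehari-manifold bookkeeping.
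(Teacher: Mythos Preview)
Your proof is correct and follows essentially the same approach as the paper: both define the constraint functional (the paper calls it $\Psi_\lambda$, you call it $G$), invoke the Lagrange multiplier rule to write $I'_\lambda(\omega)=\mu\,G'(\omega)$, and then test against $\omega$ to deduce $\mu=0$ from $G'(\omega)\omega=\phi''_\omega(1)\ne0$. Your version is slightly more explicit about the $C^1$ regularity needed to justify the Lagrange rule, but the argument is otherwise identical.
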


\begin{proof}
Let $\omega$ be a critical point of $I_\lambda$ on $N_\lambda$. Then
\begin{equation}\label{e2.9}
I'_\lambda(\omega)\omega=0\ \text{and}\ I'_\lambda(\omega)=\tau\Psi'_\lambda(\omega)
\end{equation}
for some $\tau\in \mathbb{R}$, where
 \begin{equation}\label{ee2.9}
\Psi_\lambda(\omega):=\|\omega\|_X^2-\lambda\displaystyle\int_{\mathbb{R}^N}a(x)|\omega(x,0)|^2\ln|\omega(x,0)|dx-\displaystyle\int_{\mathbb{R}^N}b(x)|\omega(x,0)|^{2_\alpha^*}dx.
 \end{equation}

Because of $\omega\not\in N^\pm_\lambda$, we get
 \begin{align}
\Psi_\lambda'(\omega)\omega:&=2\|\omega\|^2_X-2\lambda\displaystyle\int_{\mathbb{R}^N}a(x)|\omega(x,0)|^2\ln|\omega(x,0)|dx\notag
\\&~~~~-\lambda\displaystyle\int_{\mathbb{R}^N}a(x)|\omega(x,0)|^2dx-2_\alpha^*\displaystyle\int_{\mathbb{R}^N}b(x)|\omega(x,0)|^{2_\alpha^*}dx\notag
\\&=\|\omega\|^2_X-\lambda\displaystyle\int_{\mathbb{R}^N}a(x)|\omega(x,0)|^2\ln|\omega(x,0)|dx\notag
\\&~~~~-\lambda\displaystyle\int_{\mathbb{R}^N}a(x)|\omega(x,0)|^2dx-(2_\alpha^*-1)\displaystyle\int_{\mathbb{R}^N}b(x)|\omega(x,0)|^{2_\alpha^*}dx\notag
\\&=\phi''_\omega(1)\neq0,\notag
 \end{align}
 which together with (\ref{e2.9}) indicates that $\tau=0$, i.e. $I'_\lambda(\omega)=0$.
\end{proof}


\begin{lemma}\label{l2.3}
There exists $\lambda_2>0$ small enough such that if $\lambda\in(0,\lambda_2)$, then the set $N^0_\lambda=\emptyset$.
\end{lemma}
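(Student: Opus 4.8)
The plan is to argue by contradiction: suppose there exist $\lambda_n\to 0^+$ and $\omega_n\in N^0_{\lambda_n}$, and derive an impossibility. The first step is to normalize. Since the condition $\omega\in N^0_\lambda$ is $2$-homogeneous neither in $\|\omega\|_X$ (because of the logarithmic term) nor scale-free, I would instead work directly with $\omega_n$ and keep track of $\|\omega_n\|_X$. Recall that $\omega_n\in N^0_{\lambda_n}$ means both
\[
\|\omega_n\|_X^2-\lambda_n\!\int_{\mathbb{R}^N}a|\omega_n(x,0)|^2\ln|\omega_n(x,0)|\,dx-\int_{\mathbb{R}^N}b|\omega_n(x,0)|^{2_\alpha^*}\,dx=0
\]
and
\[
\lambda_n\!\int_{\mathbb{R}^N}a|\omega_n(x,0)|^2\,dx+(2_\alpha^*-2)\int_{\mathbb{R}^N}b|\omega_n(x,0)|^{2_\alpha^*}\,dx=0 .
\]
From the second identity, $\int b|\omega_n(x,0)|^{2_\alpha^*}\,dx=-\frac{\lambda_n}{2_\alpha^*-2}\int a|\omega_n(x,0)|^2\,dx$; in particular $\int a|\omega_n(x,0)|^2\,dx\le 0$ (using $b\le 1$, $b_\infty<1$ and that the $2_\alpha^*$-integral is nonnegative up to a controllable sign — more precisely one first notes the left side forces $\int b|\omega_n|^{2_\alpha^*}\le 0$, hence $=0$ is possible only for $\omega_n\equiv0$, so in fact $\int a|\omega_n(x,0)|^2\,dx<0$ strictly). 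This is exactly the regime covered by Lemma \ref{l2.0}, so I may invoke the dichotomy there: either $\|\omega_n\|_X\le 1$ or $\int a|\omega_n(x,0)|^2\ln|\omega_n(x,0)|\,dx\le C\|\omega_n\|_X^2$ with $C$ independent of $n$.

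The second step is the lower bound on the norm of Nehari elements. Substituting $\int b|\omega_n(x,0)|^{2_\alpha^*}\,dx=-\frac{\lambda_n}{2_\alpha^*-2}\int a|\omega_n(x,0)|^2\,dx$ into the first identity gives
\[
\|\omega_n\|_X^2=\lambda_n\!\int a|\omega_n(x,0)|^2\ln|\omega_n(x,0)|\,dx-\frac{\lambda_n}{2_\alpha^*-2}\!\int a|\omega_n(x,0)|^2\,dx .
\]
Now I bound the right-hand side: by Lemma \ref{l2.0} (in the case $\|\omega_n\|_X>1$) the first term is $\le \lambda_n C\|\omega_n\|_X^2$, and the last term is $\le \lambda_n |a|_\infty|\omega_n(x,0)|_2^2\le \lambda_n C\|\omega_n\|_X^2$ by Proposition \ref{p2.1}; in the case $\|\omega_n\|_X\le 1$ one uses instead the logarithmic Sobolev estimate \eqref{e2.6} to get the first term $\le C(\|\omega_n\|_X^2+\|\omega_n\|_X^4)\le C\|\omega_n\|_X^2$. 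Either way,
\[
\|\omega_n\|_X^2\le \lambda_n C\|\omega_n\|_X^2,
\]
with $C$ independent of $n$. Since $\omega_n\ne 0$, dividing by $\|\omega_n\|_X^2$ yields $1\le \lambda_n C$, which is false once $\lambda_n<1/C$. This is the contradiction, so $N^0_\lambda=\emptyset$ for all $\lambda\in(0,\lambda_2)$ with $\lambda_2:=1/C$.

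The main obstacle I anticipate is bookkeeping the signs and the uniformity of the constant $C$ through the two cases of the Lemma \ref{l2.0} dichotomy — in particular making sure the estimate on $\int a|\omega_n(x,0)|^2\ln|\omega_n(x,0)|\,dx$ from above (as opposed to the one-sided bound proved in Lemma \ref{l2.0}) is what we actually need here, and that it holds with a constant not depending on $\omega_n$. When $\|\omega_n\|_X\le 1$ this requires the logarithmic Sobolev inequality of Proposition \ref{p2.3}/Remark \ref{r2.1}, splitting $\int a|\omega|^2\ln|\omega|$ as in \eqref{e2.6} and absorbing the $\frac{\sigma^2}{\pi^\alpha}\|\omega\|_X^2$ term; when $\|\omega_n\|_X>1$, Lemma \ref{l2.0} handles it directly. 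A secondary subtlety is justifying $\int a|\omega_n(x,0)|^2\,dx\le 0$ cleanly from the $N^0$-condition together with $(H_2)$ (so that $\int b|\omega_n(x,0)|^{2_\alpha^*}\,dx\ge0$), which I would state as a short preliminary observation before entering the main estimate. Everything else is routine substitution and application of the continuous embedding in Proposition \ref{p2.1}.
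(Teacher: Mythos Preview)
Your proposal is correct and follows essentially the same route as the paper: derive the two $N^0_\lambda$ identities, combine them to get $\int a|\omega|^2<0$ and $\|\omega\|_X^2=\lambda\int a|\omega|^2\ln|\omega|-\frac{\lambda}{2_\alpha^*-2}\int a|\omega|^2$, invoke the dichotomy of Lemma~\ref{l2.0}, and bound the right-hand side by $\lambda C\|\omega\|_X^2$ to force $1\le \lambda C$. The only organizational difference is that the paper first rules out the branch $\int a|\omega|^2\ln|\omega|\le C\|\omega\|_X^2$ of the dichotomy (it yields $\|\omega\|_X=0$ directly) and then works exclusively under $\|\omega\|_X\le 1$, whereas you treat both branches in parallel; either packaging is fine. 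One small cleanup: the sign observation $\int b|\omega_n|^{2_\alpha^*}>0$ comes simply from the implicit hypothesis $b>0$ (as the paper uses throughout), not from $b\le 1$ or $b_\infty<1$.
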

\begin{proof}
Let $\omega\in N^0_\lambda$. Then
\begin{equation}\label{e2.10}
\|\omega\|_X^2-\lambda\displaystyle\int_{\mathbb{R}^N}a(x)|\omega(x,0)|^2\ln|\omega(x,0)|dx-\displaystyle\int_{\mathbb{R}^N}b(x)|\omega(x,0)|^{2_\alpha^*}dx=0
\end{equation}
and
\begin{align}
 &\|\omega\|_X^2-\lambda\displaystyle\int_{\mathbb{R}^N}a(x)|\omega(x,0)|^2\ln|\omega(x,0)|dx
-\lambda\displaystyle\int_{\mathbb{R}^N}a(x)|\omega(x,0)|^2dx \notag\\ &-(2_\alpha^*-1)\displaystyle\int_{\mathbb{R}^N}b(x)|\omega(x,0)|^{2_\alpha^*}dx=0,\notag
\end{align}
which lead to
\begin{equation}\label{e2.11}
\lambda\displaystyle\int_{\mathbb{R}^N}a(x)|\omega(x,0)|^2dx=(2-2_\alpha^*)\displaystyle\int_{\mathbb{R}^N}b(x)|\omega(x,0)|^{2_\alpha^*}dx<0.
\end{equation}

In view of (\ref{e2.11}), it follows from Lemma \ref{l2.0} that either
 \begin{equation}\label{e2.12}
 \|\omega\|_X\leq1,
\end{equation}
or
\begin{equation}\label{e2.19}
\displaystyle\int_{\mathbb{R}^N}a(x)|\omega(x,0)|^2\ln|\omega(x,0)|dx\leq C\|\omega\|_X^2,
 \end{equation}
 where $C$ is a positive constant independent of $\omega\in N_\lambda^0$. If (\ref{e2.19}) holds, for sufficiently small $\lambda>0$ it follows from  (\ref{e2.10})-(\ref{e2.11}) and Proposition \ref{p2.1} that
 \begin{align}
0&=\|\omega\|_X^2-\lambda\displaystyle\int_{\mathbb{R}^N}a(x)|\omega(x,0)|^2\ln|\omega(x,0)|dx-\displaystyle\int_{\mathbb{R}^N}b(x)|\omega(x,0)|^{2_\alpha^*}dx\notag
\\&=\|\omega\|_X^2-\lambda\displaystyle\int_{\mathbb{R}^N}a(x)|\omega(x,0)|^2\ln|\omega(x,0)|dx+\frac{\lambda}{2_\alpha^*-2}\displaystyle\int_{\mathbb{R}^N}a(x)|\omega(x,0)|^2dx\notag
\\&\geq \|\omega\|_X^2(1-\lambda C) \notag \\
&\geq C\|\omega\|_X^2. \notag
\end{align}
Thus, $\|\omega\|_X=0$, which obviously yields a contradiction to the fact $\omega\neq0$. This implies that (\ref{e2.12}) holds.

On the other hand, in view of $ \ln t\leq  t$ for any $t>0$, it follows from (\ref{e2.6}) and Proposition \ref{p2.1} that
  \begin{align}\label{eee2.20}
\displaystyle\int_{\mathbb{R}^N}a(x)|\omega(x,0)|^2\ln|\omega(x,0)|dx&\leq C(\|\omega\|_X^2+|\omega(x,0)|_2^4)\leq C(\|\omega\|_X^2+\|\omega\|_X^4).
\end{align}
With the help of (\ref{e2.10})-(\ref{e2.11}), (\ref{eee2.20}) and Proposition \ref{p2.1},  we obtain
\begin{align}
\|\omega\|_X^2&=\lambda\displaystyle\int_{\mathbb{R}^N}a(x)|\omega(x,0)|^2\ln|\omega(x,0)|dx-\frac{\lambda}{2_\alpha^*-2}\displaystyle\int_{\mathbb{R}^N}a(x)|\omega(x,0)|^2dx\notag
\\&\leq\lambda C(\|\omega\|_X^2+\|\omega\|_X^4),\notag
\end{align}
which together with (\ref{e2.12}) gives
\begin{equation*}
 C\leq \lambda (1+\|\omega\|_X^2)\leq 2\lambda.
 \end{equation*}
 This is a contradiction with the fact that $\lambda$ is sufficiently small.
\end{proof}

\begin{lemma}\label{l2.4}
There exists $\lambda_3>0$ small enough such that if $\lambda\in(0,\lambda_3)$, then $I_\lambda$ is bounded from below on $N_\lambda$.
\end{lemma}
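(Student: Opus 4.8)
The plan is to work for $\lambda<\min\{\lambda_1,\lambda_2\}$, so that $N^0_\lambda=\emptyset$ by Lemma \ref{l2.3} and the fibering analysis of Lemma \ref{l2.1} is available, and then to rewrite $I_\lambda$ on $N_\lambda$ by means of the Nehari identity so that the logarithmic term can be tamed by Lemma \ref{l2.0}. Writing $\kappa:=\frac12-\frac1{2_\alpha^*}=\frac{\alpha}{N}>0$ and using $I'_\lambda(\omega)\omega=0$ for $\omega\in N_\lambda$, one obtains from $I_\lambda(\omega)-\tfrac1{2_\alpha^*}I'_\lambda(\omega)\omega$ and $I_\lambda(\omega)-\tfrac12 I'_\lambda(\omega)\omega$ the two identities
\begin{equation*}
 I_\lambda(\omega)=\kappa\|\omega\|_X^2-\kappa\lambda\int_{\mathbb{R}^N}a(x)|\omega(x,0)|^2\ln|\omega(x,0)|\,dx+\tfrac\lambda4\int_{\mathbb{R}^N}a(x)|\omega(x,0)|^2\,dx
\end{equation*}
and $I_\lambda(\omega)=\kappa\int_{\mathbb{R}^N}b(x)|\omega(x,0)|^{2_\alpha^*}\,dx+\tfrac\lambda4\int_{\mathbb{R}^N}a(x)|\omega(x,0)|^2\,dx$. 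Throughout I would use $|\omega(x,0)|_2\le C\|\omega\|_X$ from Proposition \ref{p2.1}, so that $\bigl|\int_{\mathbb{R}^N}a(x)|\omega(x,0)|^2dx\bigr|\le|a|_\infty|\omega(x,0)|_2^2\le C\|\omega\|_X^2$.

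I would then split according to the sign of $\int_{\mathbb{R}^N}a(x)|\omega(x,0)|^2dx$. If it is positive, then by Lemma \ref{l2.1}(i) the point $\omega$ lies in $N^-_\lambda$ and $I_\lambda(\omega)=\max_{t\ge0}I_\lambda(t\omega)\ge\lim_{t\to0^+}I_\lambda(t\omega)=0$ (alternatively, the second identity above gives $I_\lambda(\omega)\ge\tfrac\lambda4\int_{\mathbb{R}^N}a(x)|\omega(x,0)|^2dx>0$ once the sign of $b$ is taken into account). If $\int_{\mathbb{R}^N}a(x)|\omega(x,0)|^2dx\le0$, then Lemma \ref{l2.0} applies and gives either $\|\omega\|_X\le1$ or $\int_{\mathbb{R}^N}a(x)|\omega(x,0)|^2\ln|\omega(x,0)|dx\le C\|\omega\|_X^2$ with $C$ independent of $\omega$. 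In the latter subcase the first identity yields
\begin{equation*}
 I_\lambda(\omega)\ge\kappa\|\omega\|_X^2-\kappa\lambda C\|\omega\|_X^2-\tfrac\lambda4|a|_\infty C\|\omega\|_X^2=(\kappa-\lambda C')\|\omega\|_X^2\ge0
\end{equation*}
as soon as $\lambda C'<\kappa$. In the subcase $\|\omega\|_X\le1$ I would argue that $I_\lambda(\omega)$ is bounded below by an absolute constant: the only delicate term is the logarithmic one, but (\ref{e2.6}) bounds it above by $C\|\omega\|_X^2+C|\omega(x,0)|_2^2+\tfrac12\bigl|\int_{\mathbb{R}^N}a(x)|\omega(x,0)|^2dx\cdot\ln|\omega(x,0)|_2^2\bigr|$, and this stays bounded because $|\omega(x,0)|_2\le C$ and $s\mapsto s^2|\ln s^2|$ is bounded on bounded sets. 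Setting $\lambda_3:=\min\{\lambda_1,\lambda_2,\kappa/(2C')\}$ and combining the cases, $I_\lambda\ge-C_1$ on $N_\lambda$ for all $\lambda\in(0,\lambda_3)$.

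The main obstacle is the logarithmic term: since $\ln|\omega|$ is unbounded there is no pointwise control, and a crude estimate of the form $\int_{\mathbb{R}^N}a(x)|\omega(x,0)|^2\ln|\omega(x,0)|dx\le C\|\omega\|_X^{2+\varepsilon}$ with $\varepsilon>0$ would already ruin the boundedness from below. What makes the argument go through is precisely the sign assumption $\int_{\mathbb{R}^N}a(x)|\omega(x,0)|^2dx\le0$, which is exactly what allows Lemma \ref{l2.0} to absorb the stray factor $\ln\|\omega\|_X$ and deliver the clean quadratic bound; this is also why the complementary regime $\int_{\mathbb{R}^N}a(x)|\omega(x,0)|^2dx>0$ must be handled through the fibering picture of Lemma \ref{l2.1} (where $\omega$ sits at a maximum of $t\mapsto I_\lambda(t\omega)$) rather than by any direct estimate of the logarithmic nonlinearity.
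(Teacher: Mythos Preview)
Your proof is correct and rests on the same core tools as the paper (the two Nehari identities obtained from $I_\lambda-\tfrac12 I'_\lambda\omega$ and $I_\lambda-\tfrac1{2_\alpha^*}I'_\lambda\omega$, Lemma~\ref{l2.0}, and the estimate (\ref{e2.6})), but the case analysis is organized differently. The paper splits according to $\omega\in N_\lambda^+$ versus $\omega\in N_\lambda^-$; whenever the energy is not automatically nonnegative it rules out the second alternative of Lemma~\ref{l2.0} by a contradiction with the Nehari constraint, deduces $\|\omega\|_X\le1$, and then reads off $I_\lambda(\omega)\ge\tfrac\lambda4\int a|\omega|^2dx\ge-\lambda C$ from the second identity. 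You instead split on the sign of $\int a|\omega|^2dx$ and, in the branch where Lemma~\ref{l2.0} yields the quadratic bound on the logarithmic integral, you feed that bound into the \emph{first} identity to get $I_\lambda(\omega)\ge(\kappa-\lambda C')\|\omega\|_X^2\ge0$ directly, which is a mild streamlining over the paper's contradiction step. One small remark: your appeal to Lemma~\ref{l2.1}(i) for ``$I_\lambda(\omega)=\max_{t\ge0}I_\lambda(t\omega)$'' tacitly uses uniqueness of the fibering critical point when $\int a|\omega|^2dx>0$ (true, since $t\mapsto\phi'_\omega(t)/t$ is strictly decreasing in that case, but not stated in Lemma~\ref{l2.1}(i)); your parenthetical alternative via the second identity sidesteps this and is the cleaner route.
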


\begin{proof}
Let $\omega\in N^+_\lambda$. According to the definition of $N^+_\lambda$, we get
\begin{equation*}
\lambda\displaystyle\int_{\mathbb{R}^N}a(x)|\omega(x,0)|^2dx<0
 \end{equation*}
 and
 \begin{equation*}
\displaystyle\int_{\mathbb{R}^N}b(x)|\omega(x,0)|^{2_\alpha^*}dx<\frac{-\lambda}{2_\alpha^*-2}\displaystyle\int_{\mathbb{R}^N}a(x)|\omega(x,0)|^2dx.
 \end{equation*}
 As discussing for (\ref{e2.12}), for $\lambda>0$ small enough we can obtain
 \begin{equation}\label{e2.21}
\|\omega\|_X\leq1.
 \end{equation}
Hence, the low bound of $I_\lambda$ restricted on $N^+_\lambda$ can be attained by Proposition \ref{p2.1} and (\ref{e2.21}), i.e.
 \begin{align}\label{e2.23}
I_\lambda(\omega)&=I_\lambda(\omega)-\frac{1}{2}I'_\lambda(\omega)\omega\notag
\\&=\frac{\lambda}{4}\displaystyle\int_{\mathbb{R}^N}a(x)|\omega(x,0)|^2dx+\left(\frac{1}{2}-\frac{1}{2_\alpha^*}\right)\displaystyle\int_{\mathbb{R}^N}b(x)|\omega(x,0)|^{2_\alpha^*}dx\notag
\\&\geq\frac{\lambda}{4}\displaystyle\int_{\mathbb{R}^N}a(x)|\omega(x,0)|^2dx\notag
\\&\geq-\lambda C\|\omega\|_X^2\notag
\\&\geq-\lambda C.
\end{align}

For any $\omega\in N^-_\lambda$, we have
\begin{align}\label{e2.24}
I_\lambda(\omega)&=I_\lambda(\omega)-\frac{1}{2}I'_\lambda(\omega)\omega\nonumber\\&=\frac{\lambda}{4}\displaystyle\int_{\mathbb{R}^N}a(x)|\omega(x,0)|^2dx+\left(\frac{1}{2}-\frac{1}{2_\alpha^*}\right)\displaystyle\int_{\mathbb{R}^N}b(x)|\omega(x,0)|^{2_\alpha^*}dx.
 \end{align}
If $I_\lambda(\omega)\geq0$ for all $\omega\in N^-_\lambda$, obviously the lower bound of $I_\lambda$ restricted on $N^-_\lambda$ can be achieved. Otherwise, if there exists $\omega\in N^-_\lambda$ such that $I_\lambda(\omega)<0$, by (\ref{e2.24}) it follows that
 \begin{equation*}
\lambda\displaystyle\int_{\mathbb{R}^N}a(x)|\omega(x,0)|^2dx<0
 \end{equation*}
 and
 \begin{equation*}
\displaystyle\int_{\mathbb{R}^N}b(x)|\omega(x,0)|^{2_\alpha^*}dx<\frac{-2_\alpha^*}{2(2_\alpha^*-2)}\lambda\displaystyle\int_{\mathbb{R}^N}a(x)|\omega(x,0)|^2dx.
 \end{equation*}

As we did for (\ref{e2.12}), there is $\lambda>0$ small enough such that
 \begin{equation}\label{e2.25}
\|\omega\|_X\leq1.
 \end{equation}
Using Proposition \ref{p2.1} and (\ref{e2.25}), we obtain
 \begin{align}\label{e2.26}
I_\lambda(\omega)&=\frac{\lambda}{4}\displaystyle\int_{\mathbb{R}^N}a(x)|\omega(x,0)|^2dx+\left(\frac{1}{2}-\frac{1}{2_\alpha^*}\right)\displaystyle\int_{\mathbb{R}^N}b(x)|\omega(x,0)|^{2_\alpha^*}dx\notag
\\&\geq\frac{\lambda}{4}\displaystyle\int_{\mathbb{R}^N}a(x)|\omega(x,0)|^2dx\notag
\\&\geq-\lambda C\|\omega\|_X^2\notag
\\&\geq-\lambda C.\notag
\end{align}
Hence, $I_\lambda$ is bounded from below on $N^-_\lambda$ according to Lemma \ref{l2.3}.
\end{proof}

In view of Lemmas \ref{l2.1} and \ref{l2.4}, we set
\begin{equation*}
\alpha_\lambda^+:=\displaystyle\inf_{\omega\in N_\lambda^+}I_\lambda(\omega)\ \text{and}\ \alpha_\lambda^-:=\displaystyle\inf_{\omega\in N_\lambda^-}I_\lambda(\omega).
 \end{equation*}

\begin{lemma}\label{l2.5}
(i) If $a(x)$ is negative or sign-changing, then $\alpha_\lambda^+<0$ and $\alpha_\lambda^+\leq\alpha_\lambda^-$.\\
(ii) If $a(x)\geq0$, then $N_\lambda^+=\emptyset$ and $\alpha_\lambda^->0$.
\end{lemma}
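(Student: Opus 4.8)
The plan is to combine the fibering–map dichotomy of Lemma \ref{l2.1} with the identity that follows from $I'_\lambda(\omega)\omega=0$ on $N_\lambda$: substituting $\int_{\mathbb{R}^N}b(x)|\omega(x,0)|^{2_\alpha^*}dx=\|\omega\|_X^2-\lambda\int_{\mathbb{R}^N}a(x)|\omega(x,0)|^2\ln|\omega(x,0)|dx$ into $I_\lambda$ gives, for every $\omega\in N_\lambda$,
$$I_\lambda(\omega)=\Big(\tfrac12-\tfrac1{2_\alpha^*}\Big)\displaystyle\int_{\mathbb{R}^N}b(x)|\omega(x,0)|^{2_\alpha^*}dx+\tfrac{\lambda}{4}\displaystyle\int_{\mathbb{R}^N}a(x)|\omega(x,0)|^2dx.$$

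\textbf{Part (i).} First I would show $N_\lambda^+\neq\emptyset$: since $a$ is negative or sign-changing, choose $\psi\in X^\alpha(\mathbb{R}_+^{N+1})\setminus\{0\}$ supported in the $x$–variable where $a<0$, so $\int_{\mathbb{R}^N}a(x)|\psi(x,0)|^2dx<0$; by Lemma \ref{l2.1}(ii) there is $t^+(\psi)>0$ with $\omega_0:=t^+(\psi)\psi\in N_\lambda^+$, and since $I_\lambda(t\psi)$ is decreasing on $(0,t^+(\psi))$ with $I_\lambda(0)=0$ we get $I_\lambda(\omega_0)<0$, hence $\alpha_\lambda^+\le I_\lambda(\omega_0)<0$. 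For $\alpha_\lambda^+\le\alpha_\lambda^-$, fix $\omega\in N_\lambda^-$. If $\int_{\mathbb{R}^N}a(x)|\omega(x,0)|^2dx<0$, apply Lemma \ref{l2.1}(ii) to $\omega$: by uniqueness the point $t^-$ there equals $1$, so $t^+(\omega)\omega\in N_\lambda^+$ and $I_\lambda(t^+(\omega)\omega)=\min_{0\le t\le 1}I_\lambda(t\omega)\le I_\lambda(\omega)$, i.e. $\alpha_\lambda^+\le I_\lambda(\omega)$. If instead $\int_{\mathbb{R}^N}a(x)|\omega(x,0)|^2dx\ge0$, then by Lemma \ref{l2.1}(i) (or, when this integral vanishes, by the elementary analysis of $\phi_\omega(t)=\tfrac{t^2}{2}\|\omega\|_X^2-\tfrac{t^{2_\alpha^*}}{2_\alpha^*}\int_{\mathbb{R}^N}b(x)|\omega(x,0)|^{2_\alpha^*}dx$) we have $I_\lambda(\omega)=\max_{t\ge0}I_\lambda(t\omega)$, and since $I_\lambda(t\omega)>0$ for small $t$, $I_\lambda(\omega)>0>\alpha_\lambda^+$. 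In all cases $\alpha_\lambda^+\le I_\lambda(\omega)$; taking the infimum over $N_\lambda^-$ yields $\alpha_\lambda^+\le\alpha_\lambda^-$.

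\textbf{Part (ii).} If $a\ge0$ then $\int_{\mathbb{R}^N}a(x)|\omega(x,0)|^2dx\ge0$ for every $\omega$, so alternative (ii) of Lemma \ref{l2.1} never occurs; consequently on each ray $\{t\omega:t>0\}$ the only point of $N_\lambda$ is the maximum $t^-(\omega)\omega\in N_\lambda^-$ produced by Lemma \ref{l2.1}(i) (and when the integral is zero, $\phi_\omega$ again has at most one positive critical point, necessarily a maximum). Hence $N_\lambda^+=N_\lambda^0=\emptyset$ and $N_\lambda=N_\lambda^-$. Next I would prove a uniform bound $\|\omega\|_X\ge c_0>0$ for all $\omega\in N_\lambda$ and all small $\lambda$: from $\|\omega\|_X^2=\lambda\int_{\mathbb{R}^N}a(x)|\omega(x,0)|^2\ln|\omega(x,0)|dx+\int_{\mathbb{R}^N}b(x)|\omega(x,0)|^{2_\alpha^*}dx$, using $\ln s\le C_\gamma s^\gamma$ on $\{|\omega(x,0)|\ge1\}$ (and nonpositivity of $a|\omega|^2\ln|\omega|$ on $\{|\omega(x,0)|<1\}$) together with Propositions \ref{p2.1}–\ref{p2.2} one gets $\int_{\mathbb{R}^N}a(x)|\omega(x,0)|^2\ln|\omega(x,0)|dx\le C\|\omega\|_X^{2+\gamma}$ and $\int_{\mathbb{R}^N}b(x)|\omega(x,0)|^{2_\alpha^*}dx\le C\|\omega\|_X^{2_\alpha^*}$, so $\|\omega\|_X^2\le\lambda C\|\omega\|_X^{2+\gamma}+C\|\omega\|_X^{2_\alpha^*}$, which is impossible for $\|\omega\|_X$ small once $\lambda$ is small. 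Finally, from the displayed identity and $a\ge0$,
$$I_\lambda(\omega)\ge\Big(\tfrac12-\tfrac1{2_\alpha^*}\Big)\displaystyle\int_{\mathbb{R}^N}b(x)|\omega(x,0)|^{2_\alpha^*}dx=\Big(\tfrac12-\tfrac1{2_\alpha^*}\Big)\Big(\|\omega\|_X^2-\lambda\displaystyle\int_{\mathbb{R}^N}a(x)|\omega(x,0)|^2\ln|\omega(x,0)|dx\Big),$$
and the right-hand side is $\ge\tfrac12(\tfrac12-\tfrac1{2_\alpha^*})c_0^2>0$ for $\lambda$ small (treating separately the regime where $\|\omega\|_X$ exceeds a fixed large threshold, where the first term already dominates, and absorbing the logarithmic term otherwise). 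Thus $\alpha_\lambda^->0$.

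\textbf{Where the difficulty lies.} The technical heart is the uniform control of $\int_{\mathbb{R}^N}a(x)|\omega(x,0)|^2\ln|\omega(x,0)|dx$ — showing it is dominated by $\|\omega\|_X^2$ up to a higher–order term, with constants independent of $\omega$ and $\lambda$ — which is exactly where the logarithmic Sobolev inequality of Remark \ref{r2.1} (with an optimized choice of $\sigma$) and the level-set splitting of Lemma \ref{l2.0} are needed, and reconciling this with the sign of $\int_{\mathbb{R}^N}b(x)|\omega(x,0)|^{2_\alpha^*}dx$ on $N_\lambda$. The degenerate case $\int_{\mathbb{R}^N}a(x)|\omega(x,0)|^2dx=0$ must also be handled directly, since neither alternative of Lemma \ref{l2.1} applies to it verbatim.
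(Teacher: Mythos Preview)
Your Part (i) is essentially the paper's argument with a cosmetic difference: you split on the sign of $\int a|\omega|^2$, whereas the paper splits on the sign of $I_\lambda(\omega)$. Both work, though your appeal to Lemma~\ref{l2.1}(i) to conclude $I_\lambda(\omega)=\max_{t\ge0}I_\lambda(t\omega)$ when $\int a|\omega|^2\ge0$ quietly assumes uniqueness of $t^-$, which that lemma does not state; the conclusion $I_\lambda(\omega)\ge0>\alpha_\lambda^+$ follows more directly from the displayed identity once both summands are nonnegative.

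Part (ii) has a genuine gap in the strict positivity of $\alpha_\lambda^-$. Your plan is to bound
\[
\int_{\mathbb{R}^N}b(x)|\omega(x,0)|^{2_\alpha^*}dx=\|\omega\|_X^2-\lambda\int_{\mathbb{R}^N}a(x)|\omega(x,0)|^2\ln|\omega(x,0)|\,dx
\]
uniformly from below using $\|\omega\|_X\ge c_0$ and your estimate $\int a|\omega|^2\ln|\omega|\le C\|\omega\|_X^{2+\gamma}$. This yields only $\|\omega\|_X^2(1-\lambda C\|\omega\|_X^{\gamma})$, which is \emph{not} bounded below by a positive constant when $\|\omega\|_X$ is large; your proposed remedy ``the first term already dominates above a large threshold'' is exactly where it fails, since the logarithmic term grows faster. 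Lemma~\ref{l2.0} cannot help either: it applies only when $\int a|\omega|^2\le0$, which is the wrong sign here.

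The paper circumvents this by a contradiction argument rather than a direct uniform bound. Assuming $\alpha_\lambda^-=0$, one takes $\omega_n\in N_\lambda^-$ with $I_\lambda(\omega_n)\to0$; the displayed identity then forces \emph{both} $\int a|\omega_n|^2\to0$ and $\int b|\omega_n|^{2_\alpha^*}\to0$. The first of these is the extra ingredient you are missing: writing $\int a|\omega_n|^2\ln|\omega_n|=\int a|\omega_n|^2\ln(|\omega_n|/\|\omega_n\|_X)+\ln\|\omega_n\|_X\int a|\omega_n|^2$, the second summand is now controllable because $\int a|\omega_n|^2=o_n(1)$, and the first is $\le C\|\omega_n\|_X^2$ by the same level--set estimate you use. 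Feeding this back into the Nehari constraint gives $\|\omega_n\|_X\to0$, contradicting $\|\omega_n\|_X\ge c_0$. Rewriting your argument in this contradiction form closes the gap.
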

\begin{proof}
$\textbf{(i)}$. If $a(x)$ is negative or sign-changing, it follows from Lemma \ref{l2.1} that $N_\lambda^+\neq\emptyset$.
Let $\omega\in N^+_\lambda$. Then we have
\begin{equation*}
\lambda\displaystyle\int_{\mathbb{R}^N}a(x)|\omega(x,0)|^2dx<(2-2_\alpha^*)\displaystyle\int_{\mathbb{R}^N}b(x)|\omega(x,0)|^{2_\alpha^*}dx<0.
 \end{equation*}
This, together with $I'_\lambda(\omega)\omega=0$,  leads to
 \begin{align}
I_\lambda(\omega)&=I_\lambda(\omega)-\frac{1}{2}I'_\lambda(\omega)\omega\notag
\\&=\frac{\lambda}{4}\displaystyle\int_{\mathbb{R}^N}a(x)|\omega(x,0)|^2dx+\left(\frac{1}{2}-\frac{1}{2_\alpha^*}\right)\displaystyle\int_{\mathbb{R}^N}b(x)|\omega(x,0)|^{2_\alpha^*}dx\notag
\\&<\frac{2-2_\alpha^*}{4}\displaystyle\int_{\mathbb{R}^N}b(x)|\omega(x,0)|^{2_\alpha^*}dx+\frac{2_\alpha^*-2}{22_\alpha^*}\displaystyle\int_{\mathbb{R}^N}b(x)|\omega(x,0)|^{2_\alpha^*}dx\notag
\\&=\left(\frac{1}{4}-\frac{1}{22_\alpha^*}\right)(2-2_\alpha^*)\displaystyle\int_{\mathbb{R}^N}b(x)|\omega(x,0)|^{2_\alpha^*}dx
\\&<0.\notag
\end{align}
Thus, we obtain $\alpha_\lambda^+<0$.

For any $\omega\in N^-_\lambda$, if $I_\lambda(\omega)\geq0$, then
\begin{equation}\label{s2.39}
I_\lambda(\omega)\geq\alpha_\lambda^+.
 \end{equation}
 If $I_\lambda(\omega)<0$, then
\begin{align*}
I_\lambda(\omega)&=I_\lambda(\omega)-\frac{1}{2}I'_\lambda(\omega)\omega
\nonumber\\&=\frac{\lambda}{4}\displaystyle\int_{\mathbb{R}^N}a(x)|\omega(x,0)|^2dx+\left(\frac{1}{2}-\frac{1}{2_\alpha^*}\right)\displaystyle\int_{\mathbb{R}^N}b(x)|\omega(x,0)|^{2_\alpha^*}dx
\nonumber\\&<0.
 \end{align*}
That is,
\begin{equation*}
\displaystyle\int_{\mathbb{R}^N}a(x)|\omega(x,0)|^2dx<0.
 \end{equation*}
With the help of Lemma \ref{l2.1} $(ii)$, there exists a unique $t^+(\omega)<t^-(\omega)=1$ such that $t^+(\omega)\omega\in N_\lambda^+$ and
\begin{equation}\label{s2.40}
I_\lambda(\omega)\geq I_\lambda(t^+(\omega)\omega)\geq \alpha_\lambda^+.
 \end{equation}
Consequently,  as a result of (\ref{s2.39}) and (\ref{s2.40}), we obtain
\begin{equation*}
\alpha_\lambda^+\leq\alpha_\lambda^-.
 \end{equation*}

$\textbf{(ii)}$. If $a(x)\geq0$, then for any $\omega\in X^\alpha(\mathbb{R}_+^{N+1})\backslash\{0\}$ we have
\begin{equation}\label{ss2.35}
\displaystyle\int_{\mathbb{R}^N}a(x)|\omega(x,0)|^2dx\geq0,
 \end{equation}
which implies $N_\lambda^+=\emptyset$.
Moreover, it follows from Lemma \ref{l2.1} $(i)$ that $N_\lambda^-\neq\emptyset$.

For any $\omega\in N^-_\lambda$, we get
\begin{align*}
I_\lambda(\omega)&=I_\lambda(\omega)-\frac{1}{2}I'_\lambda(\omega)\omega
\nonumber\\&=\frac{\lambda}{4}\displaystyle\int_{\mathbb{R}^N}a(x)|\omega(x,0)|^2dx+\left(\frac{1}{2}-\frac{1}{2_\alpha^*}\right)\displaystyle\int_{\mathbb{R}^N}b(x)|\omega(x,0)|^{2_\alpha^*}dx
\nonumber\\&\geq0,
 \end{align*}
 which implies
 \begin{equation*}
\alpha_\lambda^-\geq0.
 \end{equation*}

 We now suppose by contradiction that $\alpha_\lambda^-=0$. Let $\{\omega_n\}\subset N_\lambda^-$ be a sequence such that $I_\lambda(\omega_n)\rightarrow0$, as $n\rightarrow\infty$. Then we have
 \begin{align}
0\leftarrow I_\lambda(\omega_n)&=I_\lambda(\omega_n)-\frac{1}{2}I'_\lambda(\omega_n)\omega_n\notag
\\&=\frac{\lambda}{4}\displaystyle\int_{\mathbb{R}^N}a(x)|\omega_n(x,0)|^2dx+\left(\frac{1}{2}-\frac{1}{2_\alpha^*}\right)\displaystyle\int_{\mathbb{R}^N}b(x)|\omega_n(x,0)|^{2_\alpha^*}dx
\nonumber\\&\geq 0,\ \text{as}\ n \rightarrow \infty, \notag
 \end{align}
 which together with (\ref{ss2.35}) yields
 \begin{equation}\label{ee2.13}
\lambda\displaystyle\int_{\mathbb{R}^N}a(x)|\omega_n(x,0)|^2dx=o_n(1)~\text{and}~\displaystyle\int_{\mathbb{R}^N}b(x)|\omega_n(x,0)|^{2_\alpha^*}dx=o_n(1).
 \end{equation}

It follows (\ref{ee2.13}) and Proposition \ref{p2.1} that
\begin{align}\label{ee2.14}
&\displaystyle\int_{\mathbb{R}^N}a(x)|\omega_n(x,0)|^2\ln|\omega_n(x,0)|dx\notag
\\=& \displaystyle\int_{\mathbb{R}^N}a(x)|\omega_n(x,0)|^2\ln\frac{|\omega_n(x,0)|}{\|\omega_n\|_X}dx+\ln\|\omega_n\|_X\displaystyle\int_{\mathbb{R}^N}a(x)|\omega_n(x,0)|^2dx\notag
\\ \leq & \displaystyle\int_{\mathbb{R}^N}a(x)|\omega_n(x,0)|^2\ln\frac{|\omega_n(x,0)|}{\|\omega_n\|_X}dx+C\|\omega_n\|_X^2.
\end{align}
Processing as we did for (\ref{s2.17}) and (\ref{s2.18}), we have
 \begin{equation*}
\displaystyle\int_{\mathbb{R}^N}a(x)|\omega_n(x,0)|^2\ln\frac{|\omega_n(x,0)|}{\|\omega_n\|_X}dx\leq C\|\omega_n\|_X^2.
 \end{equation*}
Using this estimate together with (\ref{ee2.14}) leads to
  \begin{equation}\label{eee2.16}
\displaystyle\int_{\mathbb{R}^N}a(x)|\omega_n(x,0)|^2\ln|\omega_n(x,0)|dx\leq C\|\omega_n\|_X^2.
 \end{equation}
Taking into account (\ref{e2.8}), (\ref{ee2.13}), (\ref{eee2.16})  and Proposition \ref{p2.1}, for sufficiently small $\lambda>0$ we deduce that
 \begin{align*}
0&=\|\omega_n\|_X^2-\lambda\displaystyle\int_{\mathbb{R}^N}a(x)|\omega_n(x,0)|^2\ln|\omega_n(x,0)|dx-\displaystyle\int_{\mathbb{R}^N}b(x)|\omega_n(x,0)|^{2_\alpha^*}dx\notag
\\&=\|\omega_n\|_X^2-\lambda\displaystyle\int_{\mathbb{R}^N}a(x)|\omega_n(x,0)|^2\ln|\omega_n(x,0)|dx+o_n(1)\notag
\\&\geq \|\omega_n\|_X^2(1-\lambda C)+o_n(1)\\&\geq C\|\omega_n\|_X^2++o_n(1). \notag
\end{align*}
That is,
\begin{equation}\label{ee2.16}
\|\omega_n\|_X=o_n(1).
 \end{equation}

On the other hand, in view of $\ln t\leq  t$ for $t>0$, it follows from (\ref{e2.6}) and Proposition \ref{p2.1} that
  \begin{align}\label{e2.20}
\displaystyle\int_{\mathbb{R}^N}a(x)|\omega_n(x,0)|^2\ln|\omega_n(x,0)|dx&\leq C\left(\|\omega_n\|_X^2+|\omega_n(x,0)|_2^4 \right)\leq C \left(\|\omega_n\|_X^2+\|\omega_n\|_X^4\right).
\end{align}
Making use of (\ref{e2.8}), (\ref{e2.20}) and Proposition \ref{p2.2}, we get
\begin{align}
\|\omega_n\|_X^2&=\lambda\displaystyle\int_{\mathbb{R}^N}a(x)|\omega_n(x,0)|^2\ln|\omega_n(x,0)|dx+\displaystyle\int_{\mathbb{R}^N}b(x)|\omega_n(x,0)|^{2_\alpha^*}dx\notag
\\&\leq\lambda C(\|\omega_n\|_X^2+\|\omega_n\|_X^4)+C\|\omega_n\|_X^{2_\alpha^*}.\notag
\end{align}
That is, $
 \|\omega_n\|_X^{2_\alpha^*}+\|\omega_n\|_X^4\geq (1-\lambda C)\|\omega_n\|_X^2\geq C\|\omega_n\|_X^2$
for small $\lambda>0$ and some $C>0$. Hence, we have $
 \|\omega_n\|_X^2\geq C$ for some $C>0$ independent of $n\in\mathbb{Z}_+$. Apparently, this yields
 a contradiction to  (\ref{ee2.16}).
\end{proof}

Now we are ready to prove Theorem \ref{t1.1}.

\begin{proof}[Proof of Theorem \ref{t1.1}.]
\textbf{Step 1.} We shall show that there exists $\Lambda_1>0$ such that for each $\lambda\in(0,\Lambda_1)$, $I_\lambda$ has a minimizer $\omega_\lambda^+$ in $N_\lambda^+$ such that $I_\lambda(\omega_\lambda^+)=\alpha_\lambda^+$.

Let $\{\omega_n\}$ be a minimizing sequence $\{\omega_n\}\subset N_\lambda^+$, i.e.
$\displaystyle\lim_{n\rightarrow\infty}I_\lambda(\omega_n)=\alpha_\lambda^+.$
 We claim that there is some $C>0$ such that
 \begin{equation}\label{e2.26}
\|\omega_n\|_X\leq C
 \end{equation}
 for all $n\in\mathbb{Z}_+$. Note that $\{\omega_n\}\subset N_\lambda^+$. Then
 \begin{equation*}
\displaystyle\int_{\mathbb{R}^N}a(x)|\omega_n(x,0)|^2dx<0
 \end{equation*}
 and
  \begin{equation*}
\displaystyle\int_{\mathbb{R}^N}b(x)|\omega_n(x,0)|^{2_\alpha^*}dx<\frac{-\lambda}{2_\alpha^*-2}\displaystyle\int_{\mathbb{R}^N}a(x)|\omega_n(x,0)|^2dx.
 \end{equation*}
Analogous to the derivation of (\ref{e2.12}), we can see that (\ref{e2.26}) holds for $\lambda>0$ small enough.
 Thus there exists a subsequence (still denoted by $\{\omega_n\}$) and $\omega_\lambda^+\in X^\alpha(\mathbb{R}^{N+1}_+)$ such that
 \begin{equation}\label{ss2.42}
\omega_n\rightharpoonup\omega_\lambda^+\ \text{in}\ X^\alpha(\mathbb{R}^{N+1}_+), \ \text{as} \ n\rightarrow \infty.
 \end{equation}

To prove that
 \begin{equation}\label{e2.27}
\displaystyle\int_{\mathbb{R}^N}a(x)|\omega_n(x,0)|^2dx\rightarrow
\displaystyle\int_{\mathbb{R}^N}a(x)|\omega_\lambda^+(x,0)|^2dx, \ \text{as} \ n\rightarrow \infty,
 \end{equation}
we know that for any $\varepsilon>0$, according to condition $(H_1)$, there exists $R>0$ such that
$|a(x)|<\varepsilon$ for  $|x|\geq R$. It follows from (\ref{e2.26}) and Proposition \ref{p2.1} that
 \begin{equation}\label{e2.28}
\left|\displaystyle\int_{\mathbb{R}^N\backslash B_R}a(x)|\omega_n(x,0)|^2dx\right|\leq\varepsilon\|\omega_n\|_X^2\leq C\varepsilon
 \end{equation}
 and
 \begin{equation}\label{e2.29}
\left|\displaystyle\int_{\mathbb{R}^N\backslash B_R}a(x)|\omega_\lambda^+(x,0)|^2dx\right|\leq\varepsilon\|\omega_\lambda^+\|_X^2\leq C\varepsilon,
 \end{equation}
 where $B_R:=\{x\in\mathbb{R}^N;|x|<R\}$.
Then, using H\"{o}lder's inequality and Proposition \ref{p2.1} leads to
 \begin{equation}\label{e2.30}
\left|\displaystyle\int_{B_R}a(x)|\omega_n(x,0)|^2dx-
\displaystyle\int_{B_R}a(x)|\omega_\lambda^+(x,0)|^2dx\right|\rightarrow 0, \ \text{as} \ n\rightarrow \infty.
 \end{equation}
From (\ref{e2.28})-(\ref{e2.30}), we arrive at (\ref{e2.27}).

To prove that
 \begin{equation}\label{e2.31}
\displaystyle\int_{\mathbb{R}^N}a(x)|\omega_n(x,0)|^2\ln|\omega_n(x,0)|dx\rightarrow
\displaystyle\int_{\mathbb{R}^N}a(x)|\omega_\lambda^+(x,0)|^2\ln|\omega_\lambda^+(x,0)|dx, \ \text{as} \ n\rightarrow \infty,
 \end{equation}
we know from (\ref{ss2.42}) that $\omega_n(x,0)\rightarrow\omega_\lambda^+(x,0)$ for a.e. $x\in\mathbb{R}^N$. So for sufficiently large $n$ there holds
  \begin{equation*}
a(x)|\omega_n(x,0)|^2\ln|\omega_n(x,0)|\rightarrow a(x)|\omega_\lambda^+(x,0)|^2\ln|\omega_\lambda^+(x,0)|
 \end{equation*}
 for a.e. $x\in\mathbb{R}^N$. Note that for any $\beta,\,\gamma>0$, there exists a constant $C_{\beta,\gamma}>0$ such that
 \begin{equation*}
|\ln t|\leq C_{\beta,\gamma} \left(t^\beta+t^{-\gamma} \right),\ t>0.
 \end{equation*}
This gives
 \begin{equation*}
\left|\displaystyle\int_{\mathbb{R}^N}a(x)|\omega_n(x,0)|^2\ln|\omega_n(x,0)|dx\right|\leq C\displaystyle\int_{\mathbb{R}^N}|a(x)|\big[|\omega_n(x,0)|^{2-\delta}+|\omega_n(x,0)|^{2+\delta}\big]dx
 \end{equation*}
 for small $\delta>0$. By virtue of Proposition \ref{p2.1} and Lebesgue's dominated convergence theorem, we obtain (\ref{e2.31}) immediately.

 Set $\Psi_n=\omega_n-\omega_\lambda^+$. It follows from Brezis-Lieb's lemma  \cite{32} that
 \begin{equation}\label{e2.32}
\|\Psi_n\|_X^2=\|\omega_n\|_X^2-\|\omega_\lambda^+\|_X^2+o_n(1)
 \end{equation}
 and
 \begin{equation}\label{e2.33}
\displaystyle\int_{\mathbb{R}^N}b(x)|\Psi_n(x,0)|^{2_\alpha^*}dx=\displaystyle\int_{\mathbb{R}^N}b(x)|\omega_n(x,0)|^{2_\alpha^*}dx-\displaystyle\int_{\mathbb{R}^N}b(x)|\omega_\lambda^+|^{2_\alpha^*}dx+o_n(1).
 \end{equation}
From (\ref{e2.27}) and (\ref{e2.31})-(\ref{e2.33})  we deduce that
\begin{equation}\label{ee2.34}
\frac{1}{2}\|\Psi_n\|_X^2-\frac{1}{2_\alpha^*}\displaystyle\int_{\mathbb{R}^N}b(x)|\Psi_n(x,0)|^{2_\alpha^*}dx=\alpha_\lambda^+-I_\lambda(\omega_\lambda^+)+o_n(1).
 \end{equation}

As we discussed for (\ref{e2.31}), there holds
  \begin{equation}\label{eee2.35}
\displaystyle\int_{\mathbb{R}^N}a(x)\omega_n(x,0)\omega_\lambda^+(x,0)\ln|\omega_n(x,0)|dx\rightarrow
\displaystyle\int_{\mathbb{R}^N}a(x)|\omega_\lambda^+(x,0)|^2\ln|\omega_\lambda^+(x,0)|dx,
 \end{equation}
 as $n\rightarrow\infty$. Combining (\ref{ss2.42}) and (\ref{eee2.35}), we have
 \begin{equation*}
I'_\lambda(\omega_\lambda^+)\omega_\lambda^+=0,\ \ \text{i.e.} \ \ \omega_\lambda^+\in N_\lambda\cup\{0\}.
 \end{equation*}

Note that
  \begin{align}
(2_\alpha^*-2)\displaystyle\int_{\mathbb{R}^N}b(x)|\omega_\lambda^+(x,0)|^{2_\alpha^*}dx
\leq\displaystyle\liminf_{n\rightarrow\infty}(2_\alpha^*-2)
\displaystyle\int_{\mathbb{R}^N}b(x)|\omega_n(x,0)|^{2_\alpha^*}dx.\notag
 \end{align}
It follows from (\ref{e2.27}) that
    \begin{equation}\label{eee2.36}
\omega_\lambda^+\in N_\lambda^+.
 \end{equation}
According to Proposition \ref{p2.1}, it follows (\ref{ss2.42})-(\ref{e2.27}) and (\ref{e2.31})-(\ref{eee2.35}) that
 \begin{align}\label{ee2.36}
o_n(1)=I'_\lambda(\omega_n)\Psi_n&=(I'_\lambda(\omega_n)-I'_\lambda(\omega_\lambda^+))\Psi_n=\|\Psi_n\|_X^2-\displaystyle\int_{\mathbb{R}^N}b(x)|\Psi_n(x,0)|^{2_\alpha^*}dx,
\end{align}
as $n\rightarrow\infty$.

 We suppose that
\begin{equation*}
\|\Psi_n\|_X^2\rightarrow l~\text{and}~\displaystyle\int_{\mathbb{R}^N}b(x)|\Psi_n(x,0)|^{2_\alpha^*}dx\rightarrow l,
\ \text{as} \ n\rightarrow \infty,
 \end{equation*}
 for some $l\in[0,+\infty)$.

 If $l=0$, we obtain the desired result immediately. If $l>0$, we have $l\geq Sl^\frac{2}{2_\alpha^*}$ by Proposition \ref{p2.2}, and then
 \begin{equation}\label{sss2.36}
l\geq S^\frac{N}{2\alpha}.
 \end{equation}
It follows from (\ref{ee2.34}) and (\ref{eee2.36})-(\ref{sss2.36}) that
 \begin{equation*}
\alpha_\lambda^+=I_\lambda(\omega_\lambda^+)+\frac{l}{2}-\frac{l}{2_\alpha^*}\geq \alpha_\lambda^++\frac{\alpha}{N}l\geq \alpha_\lambda^++\frac{\alpha}{N}S^\frac{N}{2\alpha}.
 \end{equation*}
This is a contradiction. Hence, the only choice is $l=0$, i.e., $\omega_n\rightarrow\omega_\lambda^+$ in $X^\alpha(\mathbb{R}_+^{N+1})$ as $n\rightarrow\infty$.

 \textbf{Step 2.} We show that $\omega_\lambda^+(x,0)$ is a positive ground state solution of equation (\ref{e1.1}).

 Since $\omega_\lambda^+\in X^\alpha(\mathbb{R}_+^{N+1})$ is a local minimizer for $N_\lambda$. Lemma \ref{l2.2} tells us that $\omega_\lambda^+$ is a nontrivial solution of (\ref{e2.2}), and so $\omega_\lambda^+(x,0)$ is a nontrivial solution of equation (\ref{e1.1}). Note that $I_\lambda(|\omega_\lambda^+|)=\alpha_\lambda^+$. So we assume $\omega_\lambda^+(x,0)\geq0$. By virtue of the Maximum Principle for fractional elliptic equations \cite{13}, $\omega_\lambda^+$ is positive. Consequently, $\omega_\lambda^+(x,0)$ is a positive ground state solution of equation  (\ref{e1.1}).
\end{proof}

\begin{corollary}\label{c2.1}
\begin{description}
  \item[(i)] $I_\lambda(\omega_\lambda^+)\rightarrow0$, as $\lambda\rightarrow0$.
\item[(ii)] $\|\omega_\lambda^+\|_X\rightarrow0$, as $\lambda\rightarrow0$.
\end{description}
\end{corollary}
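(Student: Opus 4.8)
\textbf{Proof proposal for Corollary \ref{c2.1}.}

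The plan is to exploit the uniform bounds obtained in the proof of Theorem \ref{t1.1} and let $\lambda\to0$. First I would recall that for $\lambda\in(0,\Lambda_1)$ the minimizer $\omega_\lambda^+\in N_\lambda^+$ satisfies the uniform estimate $\|\omega_\lambda^+\|_X\leq1$, which was established in the argument leading to \eqref{e2.12} (see also \eqref{e2.21} and \eqref{e2.25}): indeed $\omega_\lambda^+\in N_\lambda^+$ forces $\lambda\int_{\mathbb{R}^N}a(x)|\omega_\lambda^+(x,0)|^2dx<0$ together with $\int_{\mathbb{R}^N}b(x)|\omega_\lambda^+(x,0)|^{2_\alpha^*}dx<\frac{-\lambda}{2_\alpha^*-2}\int_{\mathbb{R}^N}a(x)|\omega_\lambda^+(x,0)|^2dx$, and feeding this into the Nehari identity \eqref{e2.8} with the logarithmic estimate from Lemma \ref{l2.0} yields $\|\omega_\lambda^+\|_X\leq1$ for $\lambda$ small. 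This uniform bound is the crucial input and will be used repeatedly below.

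For part (i), I would combine \eqref{e2.23} (the representation of $I_\lambda$ on $N_\lambda$ after subtracting $\frac12 I'_\lambda(\omega)\omega$) with the uniform bound. On $N_\lambda^+$,
\begin{equation*}
I_\lambda(\omega_\lambda^+)=\frac{\lambda}{4}\displaystyle\int_{\mathbb{R}^N}a(x)|\omega_\lambda^+(x,0)|^2dx+\left(\frac{1}{2}-\frac{1}{2_\alpha^*}\right)\displaystyle\int_{\mathbb{R}^N}b(x)|\omega_\lambda^+(x,0)|^{2_\alpha^*}dx.
\end{equation*}
Using $\|\omega_\lambda^+\|_X\leq1$, Proposition \ref{p2.1} bounds $\int_{\mathbb{R}^N}a(x)|\omega_\lambda^+(x,0)|^2dx$ by a constant, so the first term is $O(\lambda)$. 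For the second term, the defining inequality of $N_\lambda^+$ gives $0\leq\int_{\mathbb{R}^N}b(x)|\omega_\lambda^+(x,0)|^{2_\alpha^*}dx<\frac{-\lambda}{2_\alpha^*-2}\int_{\mathbb{R}^N}a(x)|\omega_\lambda^+(x,0)|^2dx\leq\lambda C\|\omega_\lambda^+\|_X^2\leq\lambda C$, hence this term is also $O(\lambda)$. Therefore $|I_\lambda(\omega_\lambda^+)|\leq\lambda C\to0$ as $\lambda\to0$.

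For part (ii), I would return to the Nehari identity \eqref{e2.8} written for $\omega_\lambda^+$:
\begin{equation*}
\|\omega_\lambda^+\|_X^2=\lambda\displaystyle\int_{\mathbb{R}^N}a(x)|\omega_\lambda^+(x,0)|^2\ln|\omega_\lambda^+(x,0)|dx+\displaystyle\int_{\mathbb{R}^N}b(x)|\omega_\lambda^+(x,0)|^{2_\alpha^*}dx.
\end{equation*}
The $b$-term is $O(\lambda)$ by the $N_\lambda^+$ bound just used. For the logarithmic term, the uniform bound $\|\omega_\lambda^+\|_X\leq1$ lets me apply the estimate \eqref{e2.6} (or equivalently \eqref{eee2.20}): $\int_{\mathbb{R}^N}a(x)|\omega_\lambda^+(x,0)|^2\ln|\omega_\lambda^+(x,0)|dx\leq C(\|\omega_\lambda^+\|_X^2+\|\omega_\lambda^+\|_X^4)\leq C\|\omega_\lambda^+\|_X^2$. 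Substituting back gives $\|\omega_\lambda^+\|_X^2\leq\lambda C\|\omega_\lambda^+\|_X^2+\lambda C$, so for $\lambda$ small, $(1-\lambda C)\|\omega_\lambda^+\|_X^2\leq\lambda C$, whence $\|\omega_\lambda^+\|_X^2\leq 2\lambda C\to0$. The main obstacle is purely bookkeeping: one must be careful that the logarithmic term can be genuinely negative (when $a$ changes sign), so I would bound $\int a(x)|\omega_\lambda^+|^2\ln|\omega_\lambda^+|\,dx$ from \emph{above} by $C\|\omega_\lambda^+\|_X^2$ — which is exactly what \eqref{e2.6} provides once $\|\omega_\lambda^+\|_X\leq1$ — and this upper bound is all that is needed since it appears on the right-hand side with a positive sign. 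No lower bound on that term is required. Note also that (ii) immediately re-implies (i) via the displayed formula for $I_\lambda(\omega_\lambda^+)$, so the two parts could be proved in either order.
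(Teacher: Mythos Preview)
Your proposal is correct and follows essentially the same route as the paper. For (ii) the argument is identical: use $\omega_\lambda^+\in N_\lambda^+$ to get $\|\omega_\lambda^+\|_X\leq1$, bound the $b$-term by the $N_\lambda^+$ inequality, bound the logarithmic term via \eqref{e2.6}, and absorb into the Nehari identity. For (i) the paper is marginally slicker, simply quoting Lemma~\ref{l2.5}(i) and \eqref{e2.23} to sandwich $0>I_\lambda(\omega_\lambda^+)=\alpha_\lambda^+>-\lambda C$; your version re-derives both pieces of this sandwich from the same formula, which is fine.
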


\begin{proof}
{\bf (i)} From Lemma \ref{l2.5} $(i)$ and (\ref{e2.23}), we have
\begin{equation*}
0>I_\lambda(\omega_\lambda^+)=\alpha_\lambda^+>-\lambda C.
 \end{equation*}
This implies $I_\lambda(\omega_\lambda^+)\rightarrow0$, as $\lambda\rightarrow0$.

{\bf (ii)} From (\ref{eee2.36}),  we have
\begin{equation*}
\displaystyle\int_{\mathbb{R}^N}a(x)|\omega_\lambda^+(x,0)|^2dx<0
 \end{equation*}
  and
 \begin{equation}\label{e2.37}
\displaystyle\int_{\mathbb{R}^N}b(x)|\omega_\lambda^+(x,0)|^{2_\alpha^*}dx<\frac{-\lambda}{2_\alpha^*-2}\displaystyle\int_{\mathbb{R}^N}a(x)|\omega_\lambda^+(x,0)|^2dx.
 \end{equation}
Similar to the derivation of (\ref{e2.12}),  for sufficiently small $\lambda>0$ we have
 \begin{equation}\label{e2.38}
\|\omega_\lambda^+\|_X\leq1.
 \end{equation}
It follows from (\ref{e2.6}) and Proposition \ref{p2.1} that
  \begin{align}\label{e2.39}
\displaystyle\int_{\mathbb{R}^N}a(x)|\omega_\lambda^+(x,0)|^2\ln|\omega_\lambda^+(x,0)|dx&\leq C(\|\omega_\lambda^+\|_X^2+|\omega_\lambda^+(x,0)|_2^4)\nonumber\\&\leq C(\|\omega_\lambda^+\|_X^2+\|\omega_\lambda^+\|_X^4).
\end{align}

From (\ref{e2.37})-(\ref{e2.39}) and Proposition \ref{p2.1}, we obtain
\begin{align}
\|\omega_\lambda^+\|_X^2&=\lambda\displaystyle\int_{\mathbb{R}^N}a(x)|\omega_\lambda^+(x,0)|^2\ln|\omega_\lambda^+(x,0)|dx+\displaystyle\int_{\mathbb{R}^N}b(x)|\omega_\lambda^+(x,0)|^{2_\alpha^*}dx\notag
\\ &\leq\lambda\displaystyle\int_{\mathbb{R}^N}a(x)|\omega_\lambda^+(x,0)|^2\ln|\omega_\lambda^+(x,0)|dx+\frac{-\lambda}{2_\alpha^*-2}\displaystyle\int_{\mathbb{R}^N}a(x)|\omega_\lambda^+(x,0)|^2dx\notag
\\&\leq\lambda C(\|\omega_\lambda^+\|_X^2+\|\omega_\lambda^+\|_X^4)+\lambda\|\omega_\lambda^+\|_X^2\notag
\\&\leq\lambda C.\notag
\end{align}
\end{proof}

\section{Proof of Theorem \ref{t1.2}}

In this section, we present some results on compactness and estimates,  and then prove Theorem \ref{t1.2}.
Throughout this section, we always suppose that conditions $(H_1)-(H_3)$ hold.

\begin{lemma}\label{l3.1}
The following two statements are true.
\begin{description}
  \item[(i)] If  $a(x)$ is negative or sign-changing, then $I_\lambda$ satisfies the $(PS)_c$ condition for $c\in \left(-\infty,\, \alpha_\lambda^++\frac{\alpha}{N}S^\frac{N}{2\alpha} \right)$.
  \item[(ii)] If $a(x)\geq0$, then $I_\lambda$ satisfies the $(PS)_c$ condition for $c\in \left(-\infty,\, \frac{\alpha}{N}S^\frac{N}{2\alpha}\right)$.
\end{description}
\end{lemma}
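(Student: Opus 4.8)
The plan is to follow the standard compactness-recovery scheme for critical problems, adapted to the logarithmic term via the estimates of Section~2. Let $\{\omega_n\}\subset X^\alpha(\mathbb{R}_+^{N+1})$ be a $(PS)_c$ sequence, i.e. $I_\lambda(\omega_n)\to c$ and $I'_\lambda(\omega_n)\to 0$ in the dual space. First I would establish boundedness of $\{\omega_n\}$ in $X^\alpha(\mathbb{R}_+^{N+1})$. Using $c+o_n(1)+o_n(1)\|\omega_n\|_X = I_\lambda(\omega_n)-\frac12 I'_\lambda(\omega_n)\omega_n = \frac{\lambda}{4}\int_{\mathbb{R}^N}a(x)|\omega_n(x,0)|^2dx+\left(\frac12-\frac{1}{2_\alpha^*}\right)\int_{\mathbb{R}^N}b(x)|\omega_n(x,0)|^{2_\alpha^*}dx$, together with the logarithmic Sobolev estimate \eqref{e2.6} to control $\int a(x)|\omega_n(x,0)|^2\ln|\omega_n(x,0)|dx$ in terms of $\|\omega_n\|_X^2+\|\omega_n\|_X^4$ (splitting into the regions $\|\omega_n\|_X\le1$ and $\|\omega_n\|_X>1$ as in Lemma~\ref{l2.0}), and absorbing the small-$\lambda$ contributions, one arrives at an a priori bound $\|\omega_n\|_X\le C$ for $\lambda$ small. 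I expect a mild nuisance here: the quartic term $\|\omega_n\|_X^4$ must be dominated, which is where the smallness of $\lambda$ and the exponent $2_\alpha^*>2$ (recall $N>4\alpha$) enter.

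Next, passing to a subsequence, $\omega_n\rightharpoonup\omega$ in $X^\alpha(\mathbb{R}_+^{N+1})$, $\omega_n(x,0)\to\omega(x,0)$ a.e. and in $L^r_{loc}$ for $r\in[1,2_\alpha^*)$ by Proposition~\ref{p2.1}. Condition $(H_1)$ makes the lower-order (quadratic and logarithmic) terms behave as if the domain were bounded: exactly as in \eqref{e2.27}--\eqref{e2.31} and \eqref{eee2.35} of the proof of Theorem~\ref{t1.1}, splitting $\mathbb{R}^N=B_R\cup(\mathbb{R}^N\setminus B_R)$ and using $|a(x)|<\varepsilon$ outside $B_R$, one gets $\int a(x)|\omega_n(x,0)|^2dx\to\int a(x)|\omega(x,0)|^2dx$, $\int a(x)|\omega_n(x,0)|^2\ln|\omega_n(x,0)|dx\to\int a(x)|\omega(x,0)|^2\ln|\omega(x,0)|dx$, and similarly for the mixed term $\int a(x)\omega_n\omega\ln|\omega_n|dx$; the pointwise bound $|\ln t|\le C_{\beta,\gamma}(t^\beta+t^{-\gamma})$ plus dominated convergence handles the logarithmic integrand. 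This yields $I'_\lambda(\omega)=0$, hence $\omega\in N_\lambda\cup\{0\}$, and in case (ii) ($a\ge0$) one checks $\omega\in N_\lambda^-\cup\{0\}$ and $I_\lambda(\omega)\ge 0$ (or $\omega=0$), while in case (i) one has $I_\lambda(\omega)\ge\alpha_\lambda^+$ whenever $\omega\neq0$.

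Then set $\Psi_n=\omega_n-\omega$. By Brezis--Lieb, $\|\Psi_n\|_X^2=\|\omega_n\|_X^2-\|\omega\|_X^2+o_n(1)$ and $\int b(x)|\Psi_n(x,0)|^{2_\alpha^*}dx=\int b(x)|\omega_n(x,0)|^{2_\alpha^*}dx-\int b(x)|\omega(x,0)|^{2_\alpha^*}dx+o_n(1)$. Combining this with the convergences above gives $\frac12\|\Psi_n\|_X^2-\frac{1}{2_\alpha^*}\int b(x)|\Psi_n(x,0)|^{2_\alpha^*}dx = c-I_\lambda(\omega)+o_n(1)$, and from $I'_\lambda(\omega_n)\Psi_n=o_n(1)$ together with $I'_\lambda(\omega)\Psi_n=o_n(1)$ one deduces $\|\Psi_n\|_X^2-\int b(x)|\Psi_n(x,0)|^{2_\alpha^*}dx=o_n(1)$. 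Writing $\|\Psi_n\|_X^2\to l$ and $\int b(x)|\Psi_n(x,0)|^{2_\alpha^*}dx\to l$, suppose $l>0$. Since $b(x)\le1$ and by the fractional Sobolev inequality of Proposition~\ref{p2.2}, $l\ge S\,l^{2/2_\alpha^*}$, whence $l\ge S^{N/2\alpha}$; therefore $c=I_\lambda(\omega)+\left(\frac12-\frac{1}{2_\alpha^*}\right)l=I_\lambda(\omega)+\frac{\alpha}{N}l\ge I_\lambda(\omega)+\frac{\alpha}{N}S^{N/2\alpha}$. In case (i) this gives $c\ge\alpha_\lambda^++\frac{\alpha}{N}S^{N/2\alpha}$ (using $I_\lambda(\omega)\ge\alpha_\lambda^+$ if $\omega\neq0$, and $I_\lambda(\omega)=0\ge\alpha_\lambda^+$ if $\omega=0$ since $\alpha_\lambda^+<0$), contradicting the hypothesis on $c$; in case (ii), $I_\lambda(\omega)\ge0$ gives $c\ge\frac{\alpha}{N}S^{N/2\alpha}$, again a contradiction. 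Hence $l=0$, i.e. $\omega_n\to\omega$ strongly in $X^\alpha(\mathbb{R}_+^{N+1})$, which is the $(PS)_c$ condition.

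The main obstacle I anticipate is not any single step but the bookkeeping of the logarithmic term throughout: ensuring that the splitting $\{x:a\ge0\}$ versus $\{x:a<0\}$ combined with the auxiliary inequalities $\ln t\le C_\gamma t^\gamma$ (for $t>1$) and $\ln\frac1t\le C_{\gamma_0}t^{-\gamma_0}$ (for $t<1$) yields bounds that are \emph{uniform} in $n$ and compatible with the smallness of $\lambda$, so that boundedness of the $(PS)$ sequence truly holds; and, in the passage to the limit, that the non-compact tail of $\int a(x)|\omega_n(x,0)|^2\ln|\omega_n(x,0)|dx$ is genuinely controlled by $(H_1)$ rather than merely pointwise. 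Once boundedness and the lower-order convergences are secured, the critical-level argument via Brezis--Lieb and Proposition~\ref{p2.2} is routine and parallels Step~1 of the proof of Theorem~\ref{t1.1}.
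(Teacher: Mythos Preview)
Your overall architecture---boundedness, weak limit is a critical point, Brezis--Lieb splitting, threshold contradiction via Proposition~\ref{p2.2}---matches the paper's proof, and from the weak-convergence step onward your argument is essentially identical to it (including the use of Lemma~\ref{l2.5} to get $I_\lambda(\omega)\ge\alpha_\lambda^+$ in case~(i) and $I_\lambda(\omega)\ge0$ in case~(ii)).

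The one genuine gap is in the boundedness step. You propose to bound $\int a(x)|\omega_n(x,0)|^2\ln|\omega_n(x,0)|\,dx$ by $C(\|\omega_n\|_X^2+\|\omega_n\|_X^4)$ via \eqref{e2.6} and then absorb the result for small $\lambda$. But the quartic term cannot be absorbed by the quadratic coercive term $(\tfrac12-\tfrac{1}{2_\alpha^*})\|\omega_n\|_X^2$ once $\|\omega_n\|_X$ is large, no matter how small $\lambda$ is; and noting $2_\alpha^*>2$ does not help (in fact $N>4\alpha$ forces $2_\alpha^*<4$, so the critical term is weaker than quartic). Also, Lemma~\ref{l2.0} as stated requires $\int a|\omega_n|^2\le0$, which is not known for a generic $(PS)$ sequence. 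The paper closes this gap by a two-pass contradiction argument: assuming $\|\omega_n\|_X\to\infty$, the identity $I_\lambda(\omega_n)-\tfrac12 I'_\lambda(\omega_n)\omega_n$ together with $\int b|\omega_n|^{2_\alpha^*}\ge0$ first yields the \emph{linear} bound $\int a|\omega_n|^2\le C\|\omega_n\|_X$; feeding this and $\ln|\omega_n(\cdot,0)|_2^2=2\ln|\omega_n(\cdot,0)|_2\le 2|\omega_n(\cdot,0)|_2\le C\|\omega_n\|_X$ into \eqref{e2.6} upgrades the log-integral bound to $C\|\omega_n\|_X^2$ (quadratic, not quartic). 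Only then does the combination $I_\lambda(\omega_n)-\tfrac{1}{2_\alpha^*}I'_\lambda(\omega_n)\omega_n$ produce $C\|\omega_n\|_X^2\le 1+\tfrac{\alpha}{N}S^{N/2\alpha}+\lambda\|\omega_n\|_X$ for small $\lambda$, a contradiction. Once you insert this refinement, the rest of your proposal goes through exactly as in the paper.
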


\begin{proof}
Let $\{\omega_n\}\subset X^\alpha(\mathbb{R}_+^{N+1})$ be a $(PS)_c$ sequence for $I_\lambda$.
We claim that there exists some $C>0$ such that
\begin{equation}\label{e3.1}
\|\omega_n\|_X\leq C,\ \, n\in\mathbb{Z}_+.
 \end{equation}

Suppose otherwise that $\|\omega_n\|_X\rightarrow\infty$, as $n\rightarrow\infty$. A straightforward calculation gives
\begin{align}
2\lambda\|\omega_n\|_X&\geq1+\frac{\alpha}{N}S^\frac{N}{2\alpha}+\lambda\|\omega_n\|_X\notag
\\&\geq I_\lambda(\omega_n)-\frac{1}{2}I'_\lambda(\omega_n)\omega_n\notag
\\&\geq\frac{\lambda}{4}\displaystyle\int_{\mathbb{R}^N}a(x)|\omega_n(x,0)|^2dx+\left(\frac{1}{2}-\frac{1}{2_\alpha^*}\right)\displaystyle\int_{\mathbb{R}^N}b(x)|\omega_n(x,0)|^{2_\alpha^*}dx\notag
\\&\geq\frac{\lambda}{4}\displaystyle\int_{\mathbb{R}^N}a(x)|\omega_n(x,0)|^2dx.\notag
\end{align}
For sufficiently large $n$,  there holds
 \begin{equation}\label{e3.2}
\displaystyle\int_{\mathbb{R}^N}a(x)|\omega_n(x,0)|^2dx\leq C\|\omega_n\|_X
 \end{equation}
 for some $C>0$.

It follows Proposition \ref{p2.1} that
 \begin{equation}\label{e3.3}
 \ln |\omega_n(x,0)|_2^2=2 \ln |\omega_n(x,0)|_2\leq 2|\omega_n(x,0)|_2\leq C\|\omega_n\|_X
 \end{equation}
 for some $C>0$. Using (\ref{e2.6}) and (\ref{e3.2})-(\ref{e3.3}) yields
  \begin{equation*}\label{e3.4}
\displaystyle\int_{\mathbb{R}^N}a(x)|\omega_n(x,0)|^2\ln|\omega_n(x,0)|dx\leq C\|\omega_n\|^2_X.
 \end{equation*}
Thus, we have
  \begin{align}
&1+\frac{\alpha}{N}S^\frac{N}{2\alpha}+\lambda\|\omega_n\|_X\notag
\\&\geq I_\lambda(\omega_n)-\frac{1}{2_\alpha^*}I'_\lambda(\omega_n)\omega_n\notag
\\&=\left(\frac{1}{2}-\frac{1}{2_\alpha^*}\right)\|\omega_n\|_X^2-\lambda\left(\frac{1}{2}-\frac{1}{2_\alpha^*}\right)\displaystyle\int_{\mathbb{R}^N}a(x)|\omega_n(x,0)|^2\ln|\omega_n(x,0)|dx\notag
\\&~~~~+\frac{\lambda}{4}\displaystyle\int_{\mathbb{R}^N}a(x)|\omega_n(x,0)|^2dx\notag
\\&\geq\left(\frac{1}{2}-\frac{1}{2_\alpha^*}\right)\|\omega_n\|_X^2-\lambda C\|\omega_n\|_X^2\notag
\\&\geq C\|\omega_n\|_X^2\notag
\end{align}
for sufficiently small $\lambda>0$. This obviously contradicts our assumption that $\{\|\omega_n\|_X\}$ is unbounded. Hence,  (\ref{e3.1}) holds.

From (\ref{e3.1}), there exists a subsequence (still denoted by $\{\omega_n\}$) and $\omega_0\in X^\alpha \left(\mathbb{R}_+^{N+1}\right)$ such that
 \begin{equation}\label{ee3.4}
\omega_n\rightharpoonup \omega_0~\text{in}~X^\alpha(\mathbb{R}_+^{N+1}), \ \text{as}\ n\rightarrow\infty.
 \end{equation}
Similar to the derivations of (\ref{e2.27}) and (\ref{e2.31}),  as $n\rightarrow\infty$ we have
\begin{equation}\label{e3.5}
\displaystyle\int_{\mathbb{R}^N}a(x)|\omega_n(x,0)|^2dx\rightarrow\displaystyle\int_{\mathbb{R}^N}a(x)|\omega_0(x,0)|^2dx
 \end{equation}
 and
 \begin{equation}\label{e3.6}
\displaystyle\int_{\mathbb{R}^N}a(x)|\omega_n(x,0)|^2\ln|\omega_n(x,0)|dx
\rightarrow\displaystyle\int_{\mathbb{R}^N}a(x)|\omega_0(x,0)|^2\ln|\omega_0(x,0)|dx.
 \end{equation}

 Set $\Psi_n=\omega_n-\omega_0$. It follows from Brezis-Lieb's lemma \cite{32} that
 \begin{equation}\label{e3.7}
\|\Psi_n\|_X^2=\|\omega_n\|_X^2-\|\omega_0\|_X^2+o_n(1)
 \end{equation}
 and
 \begin{equation}\label{e3.8}
\displaystyle\int_{\mathbb{R}^N}b(x)|\Psi_n(x,0)|^{2_\alpha^*}dx=\displaystyle\int_{\mathbb{R}^N}b(x)|\omega_n(x,0)|^{2_\alpha^*}dx-\displaystyle\int_{\mathbb{R}^N}b(x)|\omega_0|^{2_\alpha^*}dx+o_n(1).
 \end{equation}
From (\ref{e3.5})-(\ref{e3.8}) we deduce
\begin{equation}\label{e3.9}
\frac{1}{2}\|\Psi_n\|_X^2-\frac{1}{2_\alpha^*}\displaystyle\int_{\mathbb{R}^N}b(x)|\Psi_n(x,0)|^{2_\alpha^*}dx=c-I_\lambda(\omega_0)+o_n(1).
 \end{equation}

Similar to the derivation of  (\ref{e2.31}),  as $n\rightarrow\infty$ we have
  \begin{equation}\label{e3.10}
\displaystyle\int_{\mathbb{R}^N}a(x)\omega_n(x,0)\omega_0(x,0)\ln|\omega_n(x,0)|dx\rightarrow\displaystyle\int_{\mathbb{R}^N}a(x)|\omega_0(x,0)|^2\ln|\omega_0(x,0)|dx,
 \end{equation}
 which together with (\ref{ee3.4}) leads to
$I'_\lambda(\omega_0)\omega_0=0,$ i.e. $\omega_0\in N_\lambda\cup\{0\}.$

By (\ref{ee3.4})-(\ref{e3.10}) and Proposition \ref{p2.1}, as $n\rightarrow\infty$ we obtain
 \begin{align}\label{e3.11}
o_n(1)=I'_\lambda(\omega_n)\Psi_n&=(I'_\lambda(\omega_n)-I'_\lambda(\omega_0))\Psi_n=\|\Psi_n\|_X^2-\displaystyle\int_{\mathbb{R}^N}b(x)|\Psi_n(x,0)|^{2_\alpha^*}dx.
\end{align}
We may suppose that
\begin{equation*}
\|\Psi_n\|_X^2\rightarrow l~\text{and}~\displaystyle\int_{\mathbb{R}^N}b(x)|\Psi_n(x,0)|^{2_\alpha^*}dx\rightarrow l
 \end{equation*}
 for some $l\in[0,+\infty)$.

 If $l=0$,  we can obtain $\omega_n\rightarrow\omega_0$ as $n\rightarrow\infty$ immediately. If $l>0$, we divide our discussions into two cases.

$\textbf{(i)}$. If  $a(x)$ is negative or sign-changing, it follows from Lemma \ref{l2.5} $(i)$ that $\alpha_\lambda^-\geq \alpha_\lambda^+$. From Proposition \ref{p2.2}, we have $l\geq Sl^\frac{2}{2_\alpha^*}$. In view of (\ref{e3.9}), (\ref{e3.11}) and $\omega_0\in N_\lambda\cup\{0\}$, we get
 \begin{equation*}
\frac{\alpha}{N}S^\frac{N}{2\alpha}+\alpha_\lambda^+>c=I_\lambda(\omega_0)+\frac{l}{2}-\frac{l}{2_\alpha^*}\geq \frac{\alpha}{N}l+\alpha_\lambda^+\geq \frac{\alpha}{N}S^\frac{N}{2\alpha}+\alpha_\lambda^+.
 \end{equation*}
 This is a contradiction. Hence, $l=0$, i.e. $\omega_n\rightarrow\omega_0$ in $X^\alpha(\mathbb{R}_+^{N+1})$ as  $n\rightarrow\infty$.

$\textbf{(ii)}$. If $a(x)\geq0$, it follows from Lemma \ref{l2.5} $(ii)$ that $N_\lambda^+=\emptyset$ and $\alpha_\lambda^->0$. From Proposition \ref{p2.2}, we get $l\geq Sl^\frac{2}{2_\alpha^*}$. In view of (\ref{e3.9}), (\ref{e3.11}) and $\omega_0\in N^-_\lambda\cup\{0\}$ we have
 \begin{equation*}
\frac{\alpha}{N}S^\frac{N}{2\alpha}>c=I_\lambda(\omega_0)+\frac{l}{2}-\frac{l}{2_\alpha^*}\geq \frac{\alpha}{N}l\geq \frac{\alpha}{N}S^\frac{N}{2\alpha},
 \end{equation*}
 which yields another contradiction. Hence, $l=0$, i.e. $\omega_n\rightarrow\omega_0$ in $X^\alpha(\mathbb{R}_+^{N+1})$ as  $n\rightarrow\infty$.
\end{proof}

Let $\eta(x,y)\in C^\infty(\mathbb{R}^{N}\times\mathbb{R})$ such that $0\leq\eta\leq1$, $|\nabla\eta|\leq C$ and
\begin{displaymath}
\eta(x,y):=\left\{\begin{array}{ll}
1,~ (x,y)\in B_\frac{\delta_0}{2}^+:=\{(x,y)\in \mathbb{R}_+^{N+1};\sqrt{|x_1|^2+|x_2|^2+\cdots+|x_N|^2+|y|^2}<\frac{r_0}{2},y>0\},\\
0, ~ (x,y)\not\in B_{\delta_0}^+:=\{(x,y)\in \mathbb{R}_+^{N+1};\sqrt{|x_1|^2+|x_2|^2+\cdots+|x_N|^2+|y|^2}<r_0,y>0\},
\end{array}\right.
\end{displaymath}
where $\delta_0<r_0$ and $r_0$ is defined in Remark \ref{r1.1}.

Set
\begin{equation}\label{e3.13}
 v_{\varepsilon,z}=\eta(x-z,y)\omega_\varepsilon(x-z,y),~~z\in M.
\end{equation}
Following \cite{17}, we deduce that
\begin{equation}\label{e3.14}
 \displaystyle\int_{\mathbb{R}_+^{N+1}}y^{1-2\alpha}|\nabla v_{\varepsilon,z}|^2dxdy=\displaystyle\int_{\mathbb{R}_+^{N+1}}y^{1-2\alpha}|\nabla \omega_\varepsilon|^2dxdy+O(\varepsilon^{N-2\alpha}),
\end{equation}
\begin{equation}\label{e3.15}
\displaystyle\int_{\mathbb{R}^N}|v_{\varepsilon,z}(x,0)|^qdx=\left\{\begin{array}{ll}
O(\varepsilon^\frac{2N-(N-2\alpha)q}{2}),~ &\text{if}~q>\frac{N}{N-2\alpha},\\
O(\varepsilon^\frac{(N-2\alpha)q}{2}),~~~~ &\text{if}~q\leq\frac{N}{N-2\alpha},
\end{array}\right.
\end{equation}
and
\begin{equation}\label{e3.16}
\displaystyle\int_{\mathbb{R}^N}|v_{\varepsilon,z}(x,0)|^2dx=\left\{\begin{array}{ll}
O(\varepsilon^{2\alpha}),~ &\text{if}~N>4\alpha,\\
O(\varepsilon^{2\alpha}\ln(\frac{1}{\varepsilon})),~ &\text{if}~N=4\alpha,\\
O(\varepsilon^{N-2\alpha}),~~~~~ &\text{if}~N<4\alpha.
\end{array}\right.
\end{equation}

\begin{lemma}\label{l3.2}
$
\displaystyle\int_{\mathbb{R}^N}b(x) |v_{\varepsilon,z}(x,0)|^{2_\alpha^*}dx=\displaystyle\int_{\mathbb{R}^N}|\omega_\varepsilon(x,0)|^{2_\alpha^*}dx+
O(\varepsilon^N).
$
\end{lemma}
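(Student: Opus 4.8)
The plan is to expand the integral $\int_{\mathbb{R}^N} b(x)|v_{\varepsilon,z}(x,0)|^{2_\alpha^*}dx$ by splitting it according to the cut-off function $\eta$ and the behaviour of $b$ near the concentration point $z\in M$. Since $b(z)=1$ by $(H_2)$, I would write $b(x) = 1 - (b(z)-b(x))$ on the region where $\eta \equiv 1$, and control the error using the H\"older-type decay estimate $b(z)-b(x)\le C_0|x-z|^\rho$ from Remark \ref{r1.1}, valid on $B_{r_0}(z)$. Outside $B_{r_0}(z)$ the cut-off vanishes, so the whole contribution there is zero; the only genuine work is on the annular region $B_{r_0}(z)\setminus B_{r_0/2}(z)$ where $0\le\eta\le 1$, but there $\omega_\varepsilon$ is already of size $O(\varepsilon^{(N-2\alpha)/2})$ pointwise (recall $\omega_\varepsilon(x) \sim C\varepsilon^{(N-2\alpha)/2}|x|^{-(N-2\alpha)}$ away from the origin), so $|v_{\varepsilon,z}(x,0)|^{2_\alpha^*}$ integrated over that fixed annulus is $O(\varepsilon^N)$.

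The key steps, in order, would be: (1) Change variables $x\mapsto x+z$ so the estimate is centred at the origin and $v_{\varepsilon,z}$ becomes $\eta(x,y)\omega_\varepsilon(x,y)$; (2) Split $\int_{\mathbb{R}^N} = \int_{B_{r_0/2}} + \int_{B_{r_0}\setminus B_{r_0/2}}$, noting $\eta\equiv 1$ on the first piece; (3) On $B_{r_0/2}$, use $|v_{\varepsilon,z}(x,0)|^{2_\alpha^*} = |\omega_\varepsilon(x,0)|^{2_\alpha^*}$ and write $b(x+z) = 1 + (b(x+z)-b(z))$; the main term is $\int_{B_{r_0/2}}|\omega_\varepsilon(x,0)|^{2_\alpha^*}dx = \int_{\mathbb{R}^N}|\omega_\varepsilon(x,0)|^{2_\alpha^*}dx - \int_{\mathbb{R}^N\setminus B_{r_0/2}}|\omega_\varepsilon(x,0)|^{2_\alpha^*}dx = S^{N/2\alpha} + O(\varepsilon^N)$, using the explicit decay of $\omega_\varepsilon$; (4) For the correction term, bound $\left|\int_{B_{r_0/2}}(b(x+z)-b(z))|\omega_\varepsilon(x,0)|^{2_\alpha^*}dx\right| \le C_0\int_{B_{r_0/2}}|x|^\rho|\omega_\varepsilon(x,0)|^{2_\alpha^*}dx$ and compute that this is $O(\varepsilon^N)$ using $\rho > N$ and scaling (the integrand scales like $\varepsilon^N \cdot \varepsilon^{\rho}\cdot|w|^\rho(1+|w|^2)^{-N}$ after $x = \varepsilon w$, which is integrable precisely because $\rho>N$ would make $2N+\rho - 2N = \rho$... actually the relevant check is that $\int |w|^\rho(1+|w|^2)^{-N}dw$ converges, needing $\rho < N$; since $\rho>N$ one instead keeps the cutoff at scale $r_0$ and gets $O(\varepsilon^N)$ from the $\varepsilon^N$ prefactor alone); (5) For the annulus $B_{r_0}\setminus B_{r_0/2}$, use $|b|_\infty \le 1$ and $0\le\eta\le 1$ together with the pointwise bound on $\omega_\varepsilon$ there to get $O(\varepsilon^N)$; (6) Assemble all pieces.

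The main obstacle I anticipate is the correction-term estimate in step (4): one must carefully track how $\int_{B_{r_0/2}} |x|^\rho |\omega_\varepsilon(x,0)|^{2_\alpha^*}\,dx$ behaves under the scaling $x = \varepsilon w$, since naively the extra weight $|x|^\rho$ with $\rho>N$ grows, and it is the interplay with the sharp decay $|\omega_\varepsilon(x,0)|^{2_\alpha^*}\sim \varepsilon^N|x|^{-2N}$ that yields the claimed $O(\varepsilon^N)$; the precise bookkeeping of which exponent wins (and why $\rho>N$ rather than $\rho<N$ is the right hypothesis here, given the fixed cut-off radius) is the delicate point. The remaining estimates are routine consequences of the explicit form of $\omega_\varepsilon$ from Proposition \ref{p2.2} and the identity $\int_{\mathbb{R}_+^{N+1}}y^{1-2\alpha}|\nabla\omega_\varepsilon|^2 = \int_{\mathbb{R}^N}|\omega_\varepsilon(x,0)|^{2_\alpha^*}dx = S^{N/2\alpha}$, combined with the standard tail estimate $\int_{\mathbb{R}^N\setminus B_r}|\omega_\varepsilon(x,0)|^{2_\alpha^*}dx = O(\varepsilon^N)$ for fixed $r>0$.
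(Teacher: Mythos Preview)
Your proposal is correct and follows essentially the same route as the paper. The paper's version is simply more compressed: it writes the difference divided by $\varepsilon^N$ at once, uses the explicit form $|\omega_\varepsilon(x,0)|^{2_\alpha^*}=C\varepsilon^N(\varepsilon^2+|x|^2)^{-N}$, and bounds
\[
\frac{1}{\varepsilon^N}\Bigl[\int_{\mathbb{R}^N}|\omega_\varepsilon|^{2_\alpha^*}-\int_{\mathbb{R}^N}b(x)|v_{\varepsilon,z}|^{2_\alpha^*}\Bigr]
\le C\int_{\mathbb{R}^N\setminus B_{\delta_0/2}}|x|^{-2N}\,dx+C\int_{B_{\delta_0/2}}|x|^{\rho}|x|^{-2N}\,dx,
\]
both of which are finite precisely because $\rho>N$. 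Your hesitation in step (4) about the scaling $x=\varepsilon w$ is resolved exactly as you say: the naive rescaled integral $\int_{\mathbb{R}^N}|w|^\rho(1+|w|^2)^{-N}\,dw$ diverges when $\rho>N$, so one must keep the fixed cut-off and bound $\int_{B_{\delta_0/2}}|x|^\rho(\varepsilon^2+|x|^2)^{-N}\,dx\le\int_0^{\delta_0/2}r^{\rho-N-1}\,dr<\infty$ directly, which is what the paper does without ever rescaling.
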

\begin{proof}
By Remark \ref{r1.1} and the definition of $v_{\varepsilon,z}$, we have
\begin{align}
0&\leq\frac{1}{\varepsilon^N}\left[\displaystyle\int_{\mathbb{R}^N}|\omega_\varepsilon(x,0)|^{2_\alpha^*}dx-\displaystyle\int_{\mathbb{R}^N}b(x) |v_{\varepsilon,z}(x,0)|^{2_\alpha^*}dx\right]\notag
\\&=C\displaystyle\int_{\mathbb{R}^N\backslash B_\frac{\delta_0}{2}}\frac{b(z)-b(x+z)\eta^{2_\alpha^*}(x,0)}{(\varepsilon^2+|x|^2)^N}dx+C\displaystyle\int_{ B_\frac{\delta_0}{2}}\frac{b(z)-b(x+z)}{(\varepsilon^2+|x|^2)^N}dx\notag
\\&\leq C\displaystyle\int_{\mathbb{R}^N\backslash B_\frac{\delta_0}{2}}\frac{1}{|x|^{2N}}dx+C\displaystyle\int_{B_\frac{\delta_0}{2}}\frac{|x|^\rho}{(\varepsilon^2+|x|^2)^N}dx\notag
\\&\leq C\displaystyle\int_\frac{\delta_0}{2}^{+\infty}r^{-(N+1)}dr+C\displaystyle\int_0^\frac{\delta_0}{2}r^{\rho-N-1}dr
\nonumber\\&\leq C\notag
\end{align}
for $z\in M$.
\end{proof}

Set
\begin{equation*}
I_\infty(\omega):=\frac{1}{2}\|\omega\|_X^2-\frac{1}{2_\alpha^*}\displaystyle\int_{\mathbb{R}^N}b(x)|\omega(x,0)|^{2_\alpha^*}dx
\end{equation*}
and
\begin{equation*}
I^\infty(\omega):=\frac{1}{2}\|\omega\|_X^2-\frac{1}{2_\alpha^*}\displaystyle\int_{\mathbb{R}^N}|\omega(x,0)|^{2_\alpha^*}dx.
\end{equation*}

Define the Nehari manifold associated to $I_\infty$ and $I^\infty$ by
\begin{equation*}
N_\infty(\omega):=\{\omega\in X^\alpha(\mathbb{R}_+^{N+1})\backslash\{0\};\ (I_\infty)'(\omega)\omega=0\}.
\end{equation*}
and
\begin{equation*}
N^\infty(\omega):=\{\omega\in X^\alpha(\mathbb{R}_+^{N+1})\backslash\{0\};\ (I^\infty)'(\omega)\omega=0\}.
\end{equation*}

\begin{lemma}\label{l3.20}
$
\displaystyle\inf_{\omega\in N^\infty} I^\infty(\omega)=\displaystyle\inf_{\omega\in N_\infty} I_\infty(\omega)=\frac{\alpha}{N}S^\frac{N}{2\alpha}.
$
\end{lemma}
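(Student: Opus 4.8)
The plan is to establish each of the two equalities by sandwiching between a universal lower bound coming from the Sobolev inequality (Proposition \ref{p2.2}) and an upper bound coming from the concentrating family $v_{\varepsilon,z}$ of \eqref{e3.13}. First I would record the elementary reduction that makes both sides computable: if $\omega\in N^\infty$ then $(I^\infty)'(\omega)\omega=0$ forces $\|\omega\|_X^2=\int_{\mathbb{R}^N}|\omega(x,0)|^{2_\alpha^*}dx$, hence $I^\infty(\omega)=\left(\tfrac12-\tfrac1{2_\alpha^*}\right)\|\omega\|_X^2=\tfrac{\alpha}{N}\|\omega\|_X^2$; likewise $\omega\in N_\infty$ gives $\|\omega\|_X^2=\int_{\mathbb{R}^N}b(x)|\omega(x,0)|^{2_\alpha^*}dx$ and $I_\infty(\omega)=\tfrac{\alpha}{N}\|\omega\|_X^2$. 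Thus in both cases the problem is reduced to computing $\inf\|\omega\|_X^2$ over the respective Nehari manifold.

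For the lower bounds, let $\omega$ lie in $N^\infty$ (resp. $N_\infty$). Using $b\le \max_{\mathbb{R}^N}b=1$ from $(H_2)$, in either case $\|\omega\|_X^2\le\int_{\mathbb{R}^N}|\omega(x,0)|^{2_\alpha^*}dx$, while $\|\omega\|_X^2\ge\int_{\mathbb{R}_+^{N+1}}y^{1-2\alpha}|\nabla\omega|^2dxdy\ge S\big(\int_{\mathbb{R}^N}|\omega(x,0)|^{2_\alpha^*}dx\big)^{2/2_\alpha^*}$ by Proposition \ref{p2.2}. Combining the two displays and using $1-\tfrac{2}{2_\alpha^*}=\tfrac{2\alpha}{N}$ yields $\|\omega\|_X^2\ge S^{N/(2\alpha)}$, hence $\inf_{N^\infty}I^\infty\ge\tfrac{\alpha}{N}S^{N/(2\alpha)}$ and $\inf_{N_\infty}I_\infty\ge\tfrac{\alpha}{N}S^{N/(2\alpha)}$. (Alternatively, for $I_\infty$ one can note $I_\infty(t\omega)\ge I^\infty(t\omega)$ for all $t\ge0$ and pass through the unique maximum of the fiber maps, deducing the bound from the $I^\infty$ case.)

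For the matching upper bounds I would test with $v_{\varepsilon,z}$, $z\in M$. From \eqref{e3.14} and \eqref{e3.16}, where the hypothesis $N>4\alpha$ supplies the exponent $2\alpha$, one gets $\|v_{\varepsilon,z}\|_X^2=S^{N/(2\alpha)}+O(\varepsilon^{2\alpha})$; Lemma \ref{l3.2} gives $\int_{\mathbb{R}^N}b(x)|v_{\varepsilon,z}(x,0)|^{2_\alpha^*}dx=S^{N/(2\alpha)}+O(\varepsilon^{N})$, and repeating the computation in the proof of Lemma \ref{l3.2} with $b$ replaced by $1$ gives $\int_{\mathbb{R}^N}|v_{\varepsilon,z}(x,0)|^{2_\alpha^*}dx=S^{N/(2\alpha)}+O(\varepsilon^{N})$. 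Projecting onto the Nehari manifolds, i.e. taking $t_\varepsilon>0$ with $t_\varepsilon^{\,2_\alpha^*-2}=\|v_{\varepsilon,z}\|_X^2\big/\int_{\mathbb{R}^N}b(x)|v_{\varepsilon,z}(x,0)|^{2_\alpha^*}dx=1+O(\varepsilon^{2\alpha})$ (and the analogous $t_\varepsilon$ for $I^\infty$), we obtain $I_\infty(t_\varepsilon v_{\varepsilon,z})=\tfrac{\alpha}{N}t_\varepsilon^2\|v_{\varepsilon,z}\|_X^2=\tfrac{\alpha}{N}S^{N/(2\alpha)}+O(\varepsilon^{2\alpha})$, and similarly $I^\infty(t_\varepsilon v_{\varepsilon,z})=\tfrac{\alpha}{N}S^{N/(2\alpha)}+O(\varepsilon^{2\alpha})$. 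Letting $\varepsilon\to0$ gives $\inf_{N_\infty}I_\infty\le\tfrac{\alpha}{N}S^{N/(2\alpha)}$ and $\inf_{N^\infty}I^\infty\le\tfrac{\alpha}{N}S^{N/(2\alpha)}$, which together with the lower bounds proves the claim.

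The step I expect to be the main obstacle is the bookkeeping in the upper bound: one must verify that the $L^2$-part of $\|v_{\varepsilon,z}\|_X$ is of strictly lower order than the leading Sobolev term $S^{N/(2\alpha)}$ — precisely the point where $N>4\alpha$ is used through \eqref{e3.16} — and that the projection factors $t_\varepsilon$ converge to $1$ at a rate keeping the error at $O(\varepsilon^{2\alpha})$; the remaining ingredients are routine, the non-attainment of the infimum (due to the unavoidable positive $L^2$ trace term) being irrelevant since only the value of the infimum is asserted.
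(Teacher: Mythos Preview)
Your proposal is correct and follows essentially the same approach as the paper: both arguments obtain the lower bound from the Sobolev inequality of Proposition~\ref{p2.2} together with $b\le 1$, and the upper bound by testing with the concentrating family $v_{\varepsilon,z}$ via \eqref{e3.14}, \eqref{e3.16} and Lemma~\ref{l3.2}. The only organizational difference is that the paper works through the fiber-map identity $\inf_{N}I=\inf_{\omega\neq 0}\sup_{t\ge 0}I(t\omega)$ and the closed formula \eqref{ss3.16}, then chains the inequalities as $\tfrac{\alpha}{N}S^{N/(2\alpha)}\le \inf_{N^\infty}I^\infty\le \inf_{N_\infty}I_\infty\le \tfrac{\alpha}{N}S^{N/(2\alpha)}$, whereas you compute directly on each Nehari manifold and treat the two infima separately; these are equivalent reformulations of the same argument.
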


\begin{proof}
Since
\begin{equation}\label{ss3.16}
\displaystyle\max_{t\geq0}\left(\frac{a}{2}t^2-\frac{b}{2_\alpha^*}t^{2_\alpha^*}\right)=\frac{\alpha}{N}\left(\frac{a^2}{b^\frac{2}{2_\alpha^*}}\right)^\frac{2_\alpha^*}{2_\alpha^*-2}
\end{equation}
for any $a,b>0$. It follows from Proposition \ref{p2.2} that
\begin{align}\label{s3.17}
\displaystyle\inf_{\omega\in N^\infty} I^\infty(\omega)&=\displaystyle\inf_{\omega\in X^\alpha(\mathbb{R}_+^{N+1}) \backslash\{0\}}\sup_{t\geq0}I^\infty(t\omega)\notag
\\&=\displaystyle\inf_{\omega\in X^\alpha(\mathbb{R}_+^{N+1}) \backslash\{0\}}\frac{\alpha}{N}\left(\frac{\|\omega\|_X^2}{\left(\displaystyle\int_{\mathbb{R}^N} |\omega(x,0)|^{2_\alpha^*}dx\right)^\frac{2}{2_\alpha^*}}\right)^\frac{N}{2\alpha}\notag
\\&\geq \frac{\alpha}{N}S^\frac{N}{2\alpha}.
\end{align}
On the other hand,  from  (\ref{e3.14}), (\ref{e3.16}) and Lemma  \ref{l3.2} it follows that
\begin{align}
\displaystyle\sup_{t\geq0}I_\infty(tv_{\varepsilon,z})&=\frac{\alpha}{N}\left(\frac{\|v_{\varepsilon,z}\|_{X^\alpha}^2}{\left(\displaystyle\int_{\mathbb{R}^N} b(x)|v_{\varepsilon,z}(x,0)|^{2_\alpha^*}dx\right)^\frac{2}{{2_\alpha^*}}}\right)^\frac{N}{2\alpha}\notag
\\&=\frac{\alpha}{N}S^\frac{N}{2\alpha}+o_\varepsilon(1), \ \text{as}\ \varepsilon\rightarrow 0. \notag
\end{align}
That is,
\begin{equation}\label{s3.18}
\displaystyle\inf_{\omega\in N_\infty} I_\infty(\omega)\leq\frac{\alpha}{N}S^\frac{N}{2\alpha}.
\end{equation}

Note that $b(x)\leq1$.   From (\ref{s3.17}) and (\ref{s3.18}) we obtain
\begin{align}
\frac{\alpha}{N}S^\frac{N}{2\alpha}&\leq\displaystyle\inf_{\omega\in N^\infty} I^\infty(\omega)\nonumber\\&=\displaystyle\inf_{\omega\in X^\alpha(\mathbb{R}_+^{N+1}) \backslash\{0\}}\sup_{t\geq0}I^\infty(t\omega)\notag
\\&\leq\displaystyle\inf_{\omega\in X^\alpha(\mathbb{R}_+^{N+1}) \backslash\{0\}}\sup_{t\geq0} I_\infty(t\omega)\notag
\\& =\displaystyle\inf_{\omega\in N_\infty} I_\infty(\omega)\nonumber\\&\leq\frac{\alpha}{N}S^\frac{N}{2\alpha}.\notag
\end{align}
\end{proof}

\begin{lemma}\label{l3.3}
The following two statements are true.
\begin{description}
  \item[(i)] If $a(x)\geq 0$, then there exists $\varepsilon_0>0$ small enough  such that for $\varepsilon\in(0,\varepsilon_0)$ we have
  \begin{equation*}
\displaystyle\max_{t\geq0}I_\lambda(tv_{\varepsilon,z})<\frac{\alpha}{N}S^\frac{N}{2\alpha}-\sigma(\varepsilon_0)
\end{equation*}
uniformly with respect to $z\in M$, where $\sigma(\varepsilon_0)$ is a small positive constant. Furthermore, there exists $t_{\varepsilon,z}^->0$ such that
\begin{equation*}
t_{\varepsilon,z}^-v_{\varepsilon,z}\in N_\lambda^-.
\end{equation*}
  \item[(ii)] If $a(x)$ is sign-changing, then there exists $\overline{\varepsilon}_0>0$ small enough  such that for $\varepsilon\in(0,\overline{\varepsilon}_0)$ we have
  \begin{equation*}
\displaystyle\max_{t\geq0}I_\lambda(\omega_\lambda^++tv_{\varepsilon,z})<\alpha_\lambda^++\frac{\alpha}{N}S^\frac{N}{2\alpha}-\overline{\sigma}(\overline{\varepsilon}_0)
\end{equation*}
uniformly with respect to $z\in M$, where $\overline{\sigma}(\overline{\varepsilon}_0)$ is a small positive constant.
\end{description}
\end{lemma}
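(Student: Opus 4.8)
Since $a\geq0$ and, by Remark \ref{r1.1}, $a>0$ on $M_{r_0}$, while for $z\in M$ the function $v_{\varepsilon,z}$ is supported in $\{(x,y):|x-z|<r_0,\ y>0\}$ with $\{x:|x-z|<r_0\}\subset M_{r_0}$, one has $\int_{\mathbb{R}^N}a(x)|v_{\varepsilon,z}(x,0)|^2dx>0$. Hence Lemma \ref{l2.1}$(i)$ applied to $\omega=v_{\varepsilon,z}$ already produces $t_{\varepsilon,z}^->0$ with $t_{\varepsilon,z}^-v_{\varepsilon,z}\in N_\lambda^-$ and $I_\lambda(t_{\varepsilon,z}^-v_{\varepsilon,z})=\max_{t\geq0}I_\lambda(tv_{\varepsilon,z})$, which is the ``furthermore'' assertion. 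For the energy bound I would use the fibering-map identity (\ref{eee2.9}): writing $g(t):=\frac{t^2}{2}\|v_{\varepsilon,z}\|_X^2-\frac{t^{2_\alpha^*}}{2_\alpha^*}\int_{\mathbb{R}^N}b|v_{\varepsilon,z}(x,0)|^{2_\alpha^*}dx$, $D_{\varepsilon,z}:=\int_{\mathbb{R}^N}a|v_{\varepsilon,z}(x,0)|^2\ln|v_{\varepsilon,z}(x,0)|dx$ and $E_{\varepsilon,z}:=\int_{\mathbb{R}^N}a|v_{\varepsilon,z}(x,0)|^2dx$, one has $I_\lambda(tv_{\varepsilon,z})=g(t)-\frac{\lambda t^2}{2}D_{\varepsilon,z}-\frac{\lambda t^2}{2}(\ln t)E_{\varepsilon,z}+\frac{\lambda t^2}{4}E_{\varepsilon,z}$. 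From (\ref{e3.14}), (\ref{e3.16}) (here $N>4\alpha$ is used) and Lemma \ref{l3.2}, $\|v_{\varepsilon,z}\|_X^2=S^{\frac{N}{2\alpha}}+O(\varepsilon^{2\alpha})$ and $\int_{\mathbb{R}^N}b|v_{\varepsilon,z}(x,0)|^{2_\alpha^*}dx=S^{\frac{N}{2\alpha}}+O(\varepsilon^N)$, so that, exactly as in Lemma \ref{l3.20} via (\ref{ss3.16}), $\max_{t\geq0}g(t)\leq\frac{\alpha}{N}S^{\frac{N}{2\alpha}}+C\varepsilon^{2\alpha}$.

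\textbf{The gain and conclusion of Part (i).}
The decisive point is that the logarithmic term is favourable. Rescaling $x=z+\varepsilon y$ and using that $\int_{\mathbb{R}^N}(1+|y|^2)^{-(N-2\alpha)}dy<\infty$ precisely because $N>4\alpha$, together with $\min_{z\in M}a(z)>0$, I would show $D_{\varepsilon,z}\geq c_1\varepsilon^{2\alpha}\ln(1/\varepsilon)$ and $|E_{\varepsilon,z}|\leq C\varepsilon^{2\alpha}$, uniformly in $z\in M$, for $\varepsilon$ small. One then checks that the maximiser $t_{\varepsilon,z}^-$ stays in a fixed interval $[\tau_0,T]\subset(0,\infty)$, uniformly in small $\varepsilon$ and $z\in M$: for large $t$ the term $-\frac{t^{2_\alpha^*}}{2_\alpha^*}\int_{\mathbb{R}^N}b|v_{\varepsilon,z}(x,0)|^{2_\alpha^*}dx$ forces $I_\lambda(tv_{\varepsilon,z})<0$, while $I_\lambda(t_{\varepsilon,z}^-v_{\varepsilon,z})\geq I_\lambda(v_{\varepsilon,z})=\frac{\alpha}{N}S^{\frac{N}{2\alpha}}+o(1)>0$ bounds $t_{\varepsilon,z}^-$ from above, and the smallness of $I_\lambda(tv_{\varepsilon,z})$ near $t=0$ bounds it from below. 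Evaluating at $t=t_{\varepsilon,z}^-$ and bounding $|\ln t_{\varepsilon,z}^-|$ on $[\tau_0,T]$ gives $\max_{t\geq0}I_\lambda(tv_{\varepsilon,z})\leq\frac{\alpha}{N}S^{\frac{N}{2\alpha}}+C\varepsilon^{2\alpha}-\frac{\lambda\tau_0^2c_1}{2}\varepsilon^{2\alpha}\ln(1/\varepsilon)+C\lambda\varepsilon^{2\alpha}$; since $\varepsilon^{2\alpha}\ln(1/\varepsilon)$ dominates $\varepsilon^{2\alpha}$, the right side is $<\frac{\alpha}{N}S^{\frac{N}{2\alpha}}$ once $\varepsilon$ is small, and taking $\varepsilon_0$ small with $\sigma(\varepsilon_0)=\frac{\lambda\tau_0^2c_1}{4}\varepsilon_0^{2\alpha}\ln(1/\varepsilon_0)$ finishes Part (i).

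\textbf{Plan for Part (ii).}
Now $a$ changes sign but is still positive on $M_{r_0}$, which contains the support of $v_{\varepsilon,z}$; $\omega_\lambda^+\in N_\lambda^+$ is the positive critical point of $I_\lambda$ from Theorem \ref{t1.1} with $I_\lambda(\omega_\lambda^+)=\alpha_\lambda^+$. I would study $h(t):=I_\lambda(\omega_\lambda^++tv_{\varepsilon,z})$ directly. Because $I_\lambda'(\omega_\lambda^+)=0$, all terms linear in $v_{\varepsilon,z}$ cancel; the convexity inequality $(a+b)^{2_\alpha^*}\geq a^{2_\alpha^*}+2_\alpha^*a^{2_\alpha^*-1}b+b^{2_\alpha^*}$ for $a,b\geq0$ (applicable since $\omega_\lambda^+,v_{\varepsilon,z}\geq0$) bounds the critical term from above with no remainder; and on the support of $v_{\varepsilon,z}$, where $a>0$, the inequality $\ln(\omega_\lambda^++tv_{\varepsilon,z})\geq\ln(tv_{\varepsilon,z})$ lets one split off precisely the ``bubble'' logarithmic terms $-\frac{\lambda t^2}{2}D_{\varepsilon,z}-\frac{\lambda t^2}{2}(\ln t)E_{\varepsilon,z}$. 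The surviving interaction terms, of the types $\int_{\mathbb{R}^N}a\,\omega_\lambda^+v_{\varepsilon,z}\big[1-2\ln(1+tv_{\varepsilon,z}/\omega_\lambda^+)\big]dx$ and $\int_{\mathbb{R}^N}b(\omega_\lambda^+)^{2_\alpha^*-1}v_{\varepsilon,z}\,dx$, are controlled by the $L^\infty$-bound on $\omega_\lambda^+$ (regularity for the fractional problem) and the decay estimates (\ref{e3.15})--(\ref{e3.16}); the factor $\ln(1+tv_{\varepsilon,z}/\omega_\lambda^+)$, which is large near the concentration point, supplies the extra decay needed to keep this error at $O(\varepsilon^{2\alpha})$. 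Collecting, $h(t)\leq\alpha_\lambda^++I_\lambda(tv_{\varepsilon,z})+O(\varepsilon^{2\alpha})$; after checking, as in Part (i), that the maximiser of $h$ stays in a fixed compact $t$-interval, the estimate of Part (i) gives $\max_{t\geq0}h(t)<\alpha_\lambda^++\frac{\alpha}{N}S^{\frac{N}{2\alpha}}-\overline{\sigma}(\overline{\varepsilon}_0)$ for $\overline{\varepsilon}_0$ small.

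\textbf{Main obstacle.}
The rescaled asymptotics for $D_{\varepsilon,z}$ and $E_{\varepsilon,z}$, uniform over the compact set $M$, are routine. The genuine difficulty is Part (ii): the $\omega_\lambda^+$--$v_{\varepsilon,z}$ interaction terms produced by the logarithmic part of $I_\lambda$ are not of pure power type and are, a priori, only $L^1$-small, i.e. of order $\varepsilon^{(N-2\alpha)/2}$, which can exceed the available gain when $N$ is close to $4\alpha$. One must therefore exploit the size and sign of $\ln(1+tv_{\varepsilon,z}/\omega_\lambda^+)$ near the concentration point to see that, after all cancellations, the net loss is at most $O(\varepsilon^{2\alpha})$ and hence is still dominated by the $\varepsilon^{2\alpha}\ln(1/\varepsilon)$ gain coming from the logarithmic nonlinearity with $a>0$ on $M$; carrying this out uniformly in $z\in M$ is the crux of the argument.
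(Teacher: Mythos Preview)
Your Part (i) is essentially the paper's argument: confine $t$ to a compact interval, bound $\max_t g(t)\leq\frac{\alpha}{N}S^{N/2\alpha}+O(\varepsilon^{2\alpha})$ via (\ref{e3.14}), (\ref{e3.16}) and Lemma \ref{l3.2}, and extract the gain $C\varepsilon^{2\alpha}\ln(1/\varepsilon)$ from the logarithmic term using $a>0$ on $M_{r_0}$ and $N>4\alpha$. This is correct.

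Your Part (ii) differs from the paper and has a genuine gap. The paper writes the exact decomposition
\[
I_\lambda(\omega_\lambda^++tv_{\varepsilon,z})=\alpha_\lambda^++I_\lambda(tv_{\varepsilon,z})-i(t)-j(t),
\]
where $j(t)$ is the critical cross-term you discard via convexity, and $i(t)$ collects \emph{all} logarithmic cross-terms (including the linear ones produced by $I_\lambda'(\omega_\lambda^+)v_{\varepsilon,z}=0$). The key point you are missing is that $i(t)\geq0$ holds \emph{exactly}, with no error: writing $s=tv_{\varepsilon,z}/\omega_\lambda^+$ one has
\[
i(t)=\frac{\lambda}{2}\int a\,(\omega_\lambda^+)^2\big[\ln(1+s)+2s\ln(1+s)-s\big]\,dx+\frac{\lambda}{2}\int a\,(tv_{\varepsilon,z})^2\ln\Big(1+\frac{1}{s}\Big)\,dx,
\]
and the elementary inequality $\ln(1+s)+2s\ln(1+s)-s\geq0$ for $s\geq0$ (together with $a>0$ on the support of $v_{\varepsilon,z}$) gives $i(t)\geq0$. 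The strict gain $\overline{\sigma}(\overline{\varepsilon}_0)$ then comes entirely from $j(t)\geq C\varepsilon^{(N-2\alpha)/2}$, via the Brezis--Nirenberg estimate $\int b\,\omega_\lambda^+ v_{\varepsilon,z}^{\,2_\alpha^*-1}dx\geq C\varepsilon^{(N-2\alpha)/2}$; the logarithmic gain from Part (i) is not needed here.

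By throwing away $j(t)$ and trying instead to bound the residual logarithmic interaction by $O(\varepsilon^{2\alpha})$, you face exactly what you call the ``main obstacle'': those cross-terms are a priori only $O(\varepsilon^{(N-2\alpha)/2})$, which for $4\alpha<N<6\alpha$ overwhelms your $\varepsilon^{2\alpha}\ln(1/\varepsilon)$ gain. Your proposed remedy---exploit the sign and size of $\ln(1+tv_{\varepsilon,z}/\omega_\lambda^+)$---is not carried out, and if you pursue it carefully you will rediscover the identity above for $i(t)$. So the fix is not to sharpen your error estimate but to keep $j(t)$ and recognise that the logarithmic cross-terms are globally favourable.
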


\begin{proof}
$\textbf{(i).} $ Since
\begin{equation*}
\displaystyle\lim_{t\rightarrow0^+}I_\lambda(tv_{\varepsilon,z})=0~\text{and}~\displaystyle\lim_{t\rightarrow+\infty}I_\lambda(tv_{\varepsilon,z})=-\infty
\end{equation*}
for sufficiently small $\varepsilon>0$ and $z\in M$, there exist small $t_1>0$  and large $t_2>0$  such that
\begin{equation}\label{e3.17}
 I_\lambda(tv_{\varepsilon,z})<\frac{\alpha}{N}S^\frac{N}{2\alpha},~~t\in (0,t_1]\cup[t_2,+\infty).
\end{equation}

For $t\in [t_1,t_2]$, using (\ref{e3.13})-(\ref{e3.14}) and Lemma \ref{l3.2} we have
\begin{align}
I_\lambda(tv_{\varepsilon,z})&\leq \displaystyle\max_{t\geq0}\left(\frac{t^2}{2}\|v_{\varepsilon,z}\|_X^2-\frac{t^{2_\alpha^*}}{2_\alpha^*}\displaystyle\int_{\mathbb{R}^N}b(x) |v_{\varepsilon,z}(x,0)|^{2_\alpha^*}dx\right)\notag
\\& ~~~~+\lambda C\displaystyle\int_{\mathbb{R}^N}a(x)|v_{\varepsilon,z}(x,0)|^2dx-\lambda\frac{t_1^2}{2}\displaystyle\int_{\mathbb{R}^N}a(x)|v_{\varepsilon,z}(x,0)|^2\ln|v_{\varepsilon,z}(x,0)|dx\notag
\\&\leq\frac{\alpha}{N}\left(\frac{\|v_{\varepsilon,z}\|_X^2}{\left(\displaystyle\int_{\mathbb{R}^N}b(x) |v_{\varepsilon,z}(x,0)|^{2_\alpha^*}dx\right)^\frac{2}{2_\alpha^*}}\right)^\frac{N}{2\alpha}+\lambda C\displaystyle\int_{\mathbb{R}^N}|v_{\varepsilon,z}(x,0)|^2dx\notag
\\&~~~~-\lambda C\displaystyle\int_{\mathbb{R}^N}|v_{\varepsilon,z}(x,0)|^2\ln|v_{\varepsilon,z}(x,0)|dx\notag
\\&\leq\frac{\alpha}{N}S^\frac{N}{2\alpha}+O(\epsilon^{N-2\alpha})+\lambda C \displaystyle\int_{\mathbb{R}^N}|v_{\epsilon,z}(x,0)|^2dx\notag
\\&~~~~-\lambda C\displaystyle\int_{\mathbb{R}^N}|v_{\varepsilon,z}(x,0)|^2\ln|v_{\varepsilon,z}(x,0)|dx,\notag
\end{align}
which together with (\ref{e3.16}) and
\begin{align}
&~~~~\displaystyle\int_{\mathbb{R}^N}|v_{\varepsilon,z}(x,0)|^2\ln|v_{\varepsilon,z}(x,0)|dx\notag
\\&=\displaystyle\int_{B_\varepsilon(z)}|v_{\varepsilon,z}(x,0)|^2\ln|v_{\varepsilon,z}(x,0)|dx+\displaystyle\int_{B_{r_0}(z)\backslash B_\varepsilon(z)}|v_{\varepsilon,z}(x,0)|^2\ln|v_{\varepsilon,z}(x,0)|dx\notag
\\&\geq C\displaystyle\int_{B_\varepsilon}\varepsilon^{-(N-2\alpha)}\ln\varepsilon^{-\frac{N-2\alpha}{2}}dx+C\epsilon^{N-2\alpha}\displaystyle\int_{\{\varepsilon<|x|<r_0\}}\frac{1}{(|x|^2)^{N-2\alpha}}\ln\frac{C\varepsilon^\frac{N-2\alpha}{2}}{|x|^{N-2\alpha}}dx\notag
\\&\geq C\varepsilon^{2\alpha}\ln\varepsilon^{-1}+C\epsilon^{N-2\alpha}\displaystyle\int_{\{\varepsilon<|x|<r_0\}}\frac{1}{(|x|^2)^{N-2\alpha}}\ln C\varepsilon dx\notag
\\&=C\varepsilon^{2\alpha}\ln\varepsilon^{-1}+C\varepsilon^{N-2\alpha}\ln C\varepsilon,\notag
\end{align}
show that
\begin{align}\label{e3.19}
 I_\lambda(tv_{\varepsilon,z})\leq\frac{\alpha}{N}S^\frac{N}{2\alpha}+O(\varepsilon^{N-2\alpha})+O(\varepsilon^{2\alpha})-C\varepsilon^{2\alpha}\ln\varepsilon^{-1}+C\varepsilon^{N-2\alpha}\ln C\varepsilon<\frac{\alpha}{N}S^\frac{N}{2\alpha},
\end{align}
for $\varepsilon>0$ small and $z\in M$, where we have used the assumption $N>4\alpha$.

It follows from (\ref{e3.17}) and (\ref{e3.19}) that there exists $\varepsilon_0>0$ small enough  such that for $\varepsilon\in(0,\varepsilon_0)$ there holds
  \begin{equation*}
\displaystyle\max_{t\geq0}I_\lambda(tv_{\varepsilon,z})<\frac{\alpha}{N}S^\frac{N}{2\alpha}-\sigma(\varepsilon_0)
\end{equation*}
uniformly with respect to $z\in M$, where $\sigma(\varepsilon_0)$ is a small positive constant. Meanwhile, it follows from Lemma \ref{l2.1} $(i)$ that there exists $t_{\varepsilon,z}^->0$ such that
\begin{equation*}
t_{\varepsilon,z}^-v_{\varepsilon,z}\in N_\lambda^-.
\end{equation*}

$\textbf{(ii)}$.  Since
\begin{equation*}
\displaystyle\lim_{t\rightarrow0^+}I_\lambda(\omega_{\lambda}^++tv_{\varepsilon,z})
=\alpha_{\lambda}^+~\text{and}~\displaystyle\lim_{t\rightarrow+\infty}
I_\lambda(\omega_{\lambda}^++tv_{\varepsilon,z})=-\infty
\end{equation*}
for small $\varepsilon>0$ and $z\in M$,  there exist small $t_1>0$  and large $t_2>0$ such that
\begin{equation}\label{e3.23}
 I_\lambda(\omega_{\lambda}^++tv_{\varepsilon,z})<\alpha_{\lambda}^++\frac{\alpha}{N}S^\frac{N}{2\alpha},
 ~~t\in (0,t_1]\cup[t_2,+\infty).
\end{equation}

For $t\in [t_1,t_2]$, taking into account $I'_\lambda(\omega_{\lambda}^+)v_{\varepsilon,z}=0$ and $I_\lambda(\omega_{\lambda}^+)=\alpha_{\lambda}^+$ we derive that
\begin{align}\label{e3.24}
 &I_\lambda(\omega_{\lambda}^++tv_{\varepsilon,z})\notag
\\ &=\frac{1}{2}\|\omega_{\lambda}^+\|_X^2+\frac{t^2}{2}\|v_{\varepsilon,z}\|_X^2+t\displaystyle\int_{\mathbb{R}_+^{N+1}}y^{1-2\alpha}\nabla\omega_{\lambda}^+\nabla v_{\varepsilon,z}dxdy+t\displaystyle\int_{\mathbb{R}^N}\omega_{\lambda}^+(x,0)v_{\varepsilon,z}(x,0)dx\notag
\\&~~~~-\frac{\lambda}{2}\displaystyle\int_{\mathbb{R}^N}a(x)|\omega_{\lambda}^+(x,0)+tv_{\varepsilon,z}(x,0)|^2\ln|\omega_{\lambda}^+(x,0)+tv_{\varepsilon,z}(x,0)|dx\notag
\\&~~~~+\frac{\lambda}{4}\displaystyle\int_{\mathbb{R}^N}a(x)|\omega_{\lambda}^+(x,0)|^2dx+\frac{\lambda t^2}{4}\displaystyle\int_{\mathbb{R}^N}a(x)|v_{\varepsilon,z}(x,0)|^2dx+\frac{\lambda t}{2}\displaystyle\int_{\mathbb{R}^N}a(x)\omega_{\lambda}^+v_{\varepsilon,z}(x,0)dx\notag
\\&~~~~-\frac{1}{2_\alpha^*}\displaystyle\int_{\mathbb{R}^N}b(x) |\omega_{\lambda}^+(x,0)+tv_{\varepsilon,z}(x,0)|^{2_\alpha^*}dx\notag
\\&=I_\lambda(\omega_{\lambda}^+)+I_\lambda(tv_{\varepsilon,z})-\frac{\lambda}{2}\displaystyle\int_{\mathbb{R}^N}a(x)|\omega_{\lambda}^+(x,0)+tv_{\varepsilon,z}(x,0)|^2\ln|\omega_{\lambda}^+(x,0)+tv_{\varepsilon,z}(x,0)|dx\notag
\\&~~~~+\frac{\lambda}{2}\displaystyle\int_{\mathbb{R}^N}a(x)|\omega_{\lambda}^+(x,0)|^2\ln|\omega_{\lambda}^+(x,0)|dx+\frac{\lambda}{2}\displaystyle\int_{\mathbb{R}^N}a(x)|tv_{\varepsilon,z}(x,0)|^2\ln|tv_{\varepsilon,z}(x,0)|dx\notag
\\&~~~~+\lambda t\displaystyle\int_{\mathbb{R}^N}a(x)|\omega_{\lambda}^+(x,0)|v_{\varepsilon,z}(x,0)\ln|\omega_{\lambda}^+(x,0)|dx+\frac{\lambda t}{2}\displaystyle\int_{\mathbb{R}^N}a(x)\omega_{\lambda}^+(x,0)v_{\varepsilon,z}(x,0)dx\notag
\\&~~~~-\frac{1}{2_\alpha^*}\displaystyle\int_{\mathbb{R}^N}b(x) |\omega_{\lambda}^+(x,0)+tv_{\varepsilon,z}(x,0)|^{2_\alpha^*}dx\notag
\\&~~~~+\frac{1}{2_\alpha^*}\displaystyle\int_{\mathbb{R}^N}b(x) [|\omega_{\lambda}^+(x,0)|^{2_\alpha^*}+|tv_{\varepsilon,z}(x,0)|^{2_\alpha^*}+2_\alpha^*|\omega_{\lambda}^+(x,0)|^{2_\alpha^*-2}\omega_{\lambda}^+(x,0)tv_{\varepsilon,z}(x,0)]dx\notag
\\&=\alpha_{\lambda}^++I_\lambda(tv_{\varepsilon,z})-i(t)-j(t),
\end{align}
where
\begin{align}
 i(t):&=\frac{\lambda}{2}\displaystyle\int_{\mathbb{R}^N}a(x)|\omega_{\lambda}^+(x,0)+tv_{\varepsilon,z}(x,0)|^2\ln|\omega_{\lambda}^+(x,0)+tv_{\varepsilon,z}(x,0)|dx\notag
\\&~~~~-\frac{\lambda}{2}\displaystyle\int_{\mathbb{R}^N}a(x)|\omega_{\lambda}^+(x,0)|^2\ln|\omega_{\lambda}^+(x,0)|dx-\frac{\lambda}{2}\displaystyle\int_{\mathbb{R}^N}a(x)|tv_{\varepsilon,z}(x,0)|^2\ln|tv_{\varepsilon,z}(x,0)|dx\notag
\\&~~~~-\lambda \displaystyle\int_{\mathbb{R}^N}a(x)|\omega_{\lambda}^+(x,0)|tv_{\varepsilon,z}(x,0)\ln|\omega_{\lambda}^+(x,0)|dx-\frac{\lambda }{2}\displaystyle\int_{\mathbb{R}^N}a(x)\omega_{\lambda}^+(x,0)tv_{\varepsilon,z}(x,0)dx\notag
\end{align}
and
\begin{align}\label{ee3.24}
 j(t):&=\frac{1}{2_\alpha^*}\displaystyle\int_{\mathbb{R}^N}b(x) |\omega_{\lambda}^+(x,0)+tv_{\varepsilon,z}(x,0)|^{2_\alpha^*}dx
\\&~~~~-\frac{1}{2_\alpha^*}\displaystyle\int_{\mathbb{R}^N}b(x) [|\omega_{\lambda}^+(x,0)|^{2_\alpha^*}+|tv_{\varepsilon,z}(x,0)|^{2_\alpha^*}+2_\alpha^*|\omega_{\lambda}^+(x,0)|^{2_\alpha^*-2}\omega_{\lambda}^+(x,0)tv_{\varepsilon,z}(x,0)]dx.\notag
\end{align}

Let us estimate $i(t)$ and $j(t)$ separately. To estimate $i(t)$, we define
\begin{equation*}
f(t):=\ln(1+t)+2t\ln(1+t)-t,~~t\geq0.
\end{equation*}
Notice that
\begin{equation*}
f(0)=0~\text{and}~f'(t)=\frac{t}{1+t}+2\ln(1+t)\geq0,~~t\geq0.
\end{equation*}
So we have
\begin{equation*}
f(t)\geq0,~~t\geq0,
\end{equation*}
which implies that
\begin{align}\label{e3.25}
 i(t)&=\frac{\lambda}{2}\displaystyle\int_{\mathbb{R}^N}a(x)|\omega_{\lambda}^+(x,0)|^2\ln\left(1+\frac{tv_{\varepsilon,z}(x,0)}{\omega_{\lambda}^+(x,0)}\right)dx\notag
 \\&~~~~+\lambda\displaystyle\int_{\mathbb{R}^N}a(x)|\omega_{\lambda}^+(x,0)|^2\frac{tv_{\varepsilon,z}(x,0)}{\omega_{\lambda}^+(x,0)}\ln\left(1+\frac{tv_{\varepsilon,z}(x,0)}{\omega_{\lambda}^+(x,0)}\right)dx\notag
 \\&~~~~-\frac{\lambda }{2}\displaystyle\int_{\mathbb{R}^N}a(x)|\omega_{\lambda}^+(x,0)|^2\frac{tv_{\varepsilon,z}(x,0)}{\omega_{\lambda}^+(x,0)}dx\notag
 \\&~~~~+\frac{\lambda}{2}\displaystyle\int_{\mathbb{R}^N}a(x)|tv_{\varepsilon,z}(x,0)|^2\ln \left(1+\frac{\omega_{\lambda}^+(x,0)}{tv_{\varepsilon,z}(x,0)}\right)dx\notag
 \\&\geq\frac{\lambda}{2}\displaystyle\int_{\mathbb{R}^N}a(x)|tv_{\varepsilon,z}(x,0)|^2\ln \left(1+\frac{\omega_{\lambda}^+(x,0)}{tv_{\varepsilon,z}(x,0)}\right)dx \notag\\
 &\geq0.
\end{align}

To estimate $j(t)$, we follow \cite[formulas (17) and (21)]{35}  and  (\ref{e3.15}) to derive that
\begin{align}
&\displaystyle\int_{\mathbb{R}^N}b(x)|\omega_{\lambda}^+(x,0)+tv_{\varepsilon,z}(x,0)|^{2_\alpha^*}dx\notag
\\=& \displaystyle\int_{\mathbb{R}^N}b(x)|\omega_{\lambda}^+(x,0)|^{2_\alpha^*}dx+2_\alpha^*t\displaystyle\int_{\mathbb{R}^N}b(x)(\omega_{\lambda}^+(x,0))^{2_\alpha^*-1}v_{\varepsilon,z}(x,0)dx\notag
\\&+t^{2_\alpha^*}\displaystyle\int_{\mathbb{R}^N}b(x)|v_{\varepsilon,z}(x,0)|^{2_\alpha^*}dx\notag
\\&+2_\alpha^*t^{2_\alpha^*-1}\displaystyle\int_{\mathbb{R}^N}b(x)(v_{\varepsilon,z}(x,0))^{2_\alpha^*-1}\omega_{\lambda}^+(x,0)dx
+o\left(\varepsilon^{\frac{N-2\alpha}{2}}\right),\notag
\end{align}
Using this estimate together with (\ref{e3.15}) and (\ref{ee3.24}) leads to
\begin{align}\label{e3.26}
 j(t)&\geq |t_1|^{2_\alpha^*-1}\displaystyle\int_{\mathbb{R}^N}b(x)\omega_{\lambda}^+(x,0) |v_{\varepsilon,z}(x,0)|^{2_\alpha^*-1}dx+o(\varepsilon^{\frac{N-2\alpha}{2}})\notag
\\&\geq C\displaystyle\int_{\mathbb{R}^N}\omega_{\lambda}^+(x,0) |v_{\varepsilon,z}(x,0)|^{2_\alpha^*-1}dx+o(\varepsilon^{\frac{N-2\alpha}{2}})\notag
\\&\geq C\varepsilon^{\frac{N-2\alpha}{2}}+o(\varepsilon^{\frac{N-2\alpha}{2}})\notag
\\&\geq C\varepsilon^{\frac{N-2\alpha}{2}}
\end{align}
for small $\varepsilon>0$.

Similar to the derivations of (\ref{e3.21}) and (\ref{e3.22}), we can obtain
\begin{equation}\label{e3.27}
 I_\lambda(tv_{\varepsilon,z})<\frac{\alpha}{N}S^\frac{N}{2\alpha}
\end{equation}
for $z\in M$ and small $\varepsilon>0$.

For small $\varepsilon>0$, substituting (\ref{e3.25})-(\ref{e3.27}) into (\ref{e3.24}) yields
\begin{equation}\label{e3.28}
I_\lambda(\omega_\lambda^++tv_{\varepsilon,z})<\alpha_\lambda^++\frac{\alpha}{N}S^\frac{N}{2\alpha}
-C\varepsilon^{\frac{N-2\alpha}{2}},
\end{equation}
where $z\in M$ and $t\in[t_1,t_2]$.

Consequently,  from (\ref{e3.23}) and (\ref{e3.28}) it follows that there exists small $\overline{\varepsilon}_0>0$ such that for $\varepsilon\in(0,\overline{\varepsilon}_0)$ there holds
  \begin{equation*}
\displaystyle\max_{t\geq0}I_\lambda(\omega_\lambda^++tv_{\varepsilon,z})<\alpha_\lambda^+
+\frac{\alpha}{N}S^\frac{N}{2\alpha}-\overline{\sigma}(\overline{\varepsilon}_0)
\end{equation*}
uniformly with respect to $z\in M$, where $\overline{\sigma}(\overline{\varepsilon}_0)$ is a small positive constant.
\end{proof}

\begin{lemma}\label{l3.4}
Assume that $a(x)$ is sign-changing. Then for any $z\in M$ there exists $t_{\varepsilon,z}^->0$ such that
 \begin{equation*}
\omega_\lambda^++t_{\varepsilon,z}^-v_{\varepsilon,z}\in N_\lambda^-.
\end{equation*}
\end{lemma}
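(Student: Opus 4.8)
The plan is to produce $t^-_{\varepsilon,z}$ by an intermediate-value argument applied to the scaling factor that maps the affine path $t\mapsto\omega_\lambda^++tv_{\varepsilon,z}$ onto $N_\lambda^-$. First I would set $w_t:=\omega_\lambda^++tv_{\varepsilon,z}$ and record two facts. Since $\omega_\lambda^+$ is the positive ground state from Theorem~\ref{t1.1}, it is positive and lies in $N_\lambda^+$; in particular $\int_{\mathbb{R}^N}a(x)|\omega_\lambda^+(x,0)|^2dx<0$, and $w_t\ge\omega_\lambda^+>0$ is nonzero for every $t\ge0$. Since $v_{\varepsilon,z}=\eta(\cdot-z,\cdot)\omega_\varepsilon(\cdot-z,\cdot)\ge0$ has its trace supported in $B_{r_0}(z)\subset M_{r_0}$, Remark~\ref{r1.1} (where $a>0$) gives $A_v:=\int_{\mathbb{R}^N}a(x)|v_{\varepsilon,z}(x,0)|^2dx>0$. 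Hence
\[
\Phi(t):=\int_{\mathbb{R}^N}a(x)|w_t(x,0)|^2dx=A_v\,t^2+2t\!\int_{\mathbb{R}^N}\!a(x)\,\omega_\lambda^+(x,0)\,v_{\varepsilon,z}(x,0)\,dx+\int_{\mathbb{R}^N}\!a(x)|\omega_\lambda^+(x,0)|^2dx
\]
is a strictly convex quadratic with $\Phi(0)<0$ and $\Phi(t)\to+\infty$ as $t\to+\infty$, so there is a unique $t_*>0$ with $\Phi<0$ on $[0,t_*)$ and $\Phi>0$ on $(t_*,+\infty)$.

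Next I would show that for each $t\ge0$ there is a unique $s^-(t)>0$ with $s^-(t)\,w_t\in N_\lambda^-$, and that $s^-(\cdot)$ is continuous on $[0,+\infty)$. For $t\in[0,t_*)$ this is the larger scaling $t^-(w_t)$ from Lemma~\ref{l2.1}(ii); for $t>t_*$ it is the scaling from Lemma~\ref{l2.1}(i), which is in fact unique because $s\mapsto s^{2_\alpha^*-2}\int_{\mathbb{R}^N}b(x)|w_t(x,0)|^{2_\alpha^*}dx+\lambda\ln s\int_{\mathbb{R}^N}a(x)|w_t(x,0)|^2dx$ is strictly increasing on $(0,\infty)$; at $t=t_*$ the fibering map $\phi_{w_{t_*}}$ reduces to $\tfrac{s^2}{2}\big(\|w_{t_*}\|_X^2-\lambda\int_{\mathbb{R}^N}a(x)|w_{t_*}(x,0)|^2\ln|w_{t_*}(x,0)|dx\big)-\tfrac{s^{2_\alpha^*}}{2_\alpha^*}\int_{\mathbb{R}^N}b(x)|w_{t_*}(x,0)|^{2_\alpha^*}dx$, whose leading coefficient is positive by Lemma~\ref{l2.0} and Remark~\ref{r2.1} for $\lambda$ small, so it too has a unique positive critical point. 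In every case $\phi''_{w_t}(s^-(t))\ne0$, so the implicit function theorem gives local continuity of $s^-(\cdot)$ off $t_*$, and matching the one-sided limits at $t_*$ (both sides solve the same limiting equation $\|w_{t_*}\|_X^2-\lambda\int_{\mathbb{R}^N}a(x)|w_{t_*}(x,0)|^2\ln|w_{t_*}(x,0)|dx=s^{2_\alpha^*-2}\int_{\mathbb{R}^N}b(x)|w_{t_*}(x,0)|^{2_\alpha^*}dx$) yields continuity there as well.

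Finally I would examine the two ends. At $t=0$, $w_0=\omega_\lambda^+\in N_\lambda^+$, so by the uniqueness in Lemma~\ref{l2.1}(ii) the $N_\lambda^+$-scaling of $\omega_\lambda^+$ equals $1$, whence $s^-(0)=t^-(\omega_\lambda^+)>1$. As $t\to+\infty$, $w_t/t\to v_{\varepsilon,z}$ in $X^\alpha(\mathbb{R}_+^{N+1})$, and since $A_v>0$ the $N_\lambda^-$-scaling depends continuously on the function near $v_{\varepsilon,z}$ (Lemma~\ref{l2.1}(i) and the argument above), so the $N_\lambda^-$-scaling of $w_t/t$ converges to the finite positive $N_\lambda^-$-scaling of $v_{\varepsilon,z}$; as $s^-(t)$ equals that scaling divided by $t$, we get $s^-(t)\to0$, in particular $s^-(t)<1$ for $t$ large. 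By the intermediate value theorem there is $t^-_{\varepsilon,z}>0$ with $s^-(t^-_{\varepsilon,z})=1$, i.e. $\omega_\lambda^++t^-_{\varepsilon,z}v_{\varepsilon,z}=w_{t^-_{\varepsilon,z}}\in N_\lambda^-$. The main obstacle is the continuity of $s^-(\cdot)$ through the sign change of $\Phi$ at $t_*$, which rests on the non-degeneracy $\phi''_{w_t}(s^-(t))\ne0$; one must also keep $\lambda$ small enough throughout so that every application of Lemmas~\ref{l2.0} and \ref{l2.1} is legitimate.
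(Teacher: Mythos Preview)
Your argument is correct and rests on the same core idea as the paper's: show that the continuous path $t\mapsto w_t=\omega_\lambda^++tv_{\varepsilon,z}$ starts on the ``inside'' of $N_\lambda^-$ (where the $N_\lambda^-$-scaling exceeds~$1$) and ends on the ``outside'' (where it is below~$1$), then conclude by an intermediate-value/connectedness argument. The implementation, however, differs. The paper defines $U_1=\{\omega\ne 0:\ t^-(\omega/\|\omega\|_X)>\|\omega\|_X\}\cup\{0\}$ and $U_2$ by the reverse inequality, observes $\omega_\lambda^+\in U_1$, and proves a \emph{uniform} bound $t^-(w_t/\|w_t\|_X)\le\overline{C}$ for all $t\ge0$ by contradiction, using only that $I_\lambda$ is bounded below on $N_\lambda$ (Lemma~\ref{l2.4}); it then writes down an explicit large $t_\lambda$ with $\|w_{t_\lambda}\|_X>\overline{C}$, forcing $w_{t_\lambda}\in U_2$, and invokes connectedness of the path. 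Your route instead follows the scaling map $s^-(t)$ explicitly and uses the asymptotic $w_t/t\to v_{\varepsilon,z}$ to get $s^-(t)\to0$, which obliges you to verify continuity of $s^-$ through the crossover point $t_*$ where $\int a(x)|w_t(x,0)|^2dx$ vanishes --- a case Lemma~\ref{l2.1} does not address and which the paper's formulation sidesteps entirely. The two arguments are equivalent in content; yours is more transparent about the fibering-map structure and handles the borderline case $\Phi(t_*)=0$ carefully, while the paper's is shorter, avoids any case-splitting on $\mathrm{sign}\,\Phi(t)$, and trades the continuity analysis for the boundedness-from-below of $I_\lambda$ on $N_\lambda$.
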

\begin{proof}
Define
\begin{equation*}
 U_1:=\left\{\omega\in X^\alpha(\mathbb{R}_+^{N+1})\backslash\{0\};\ \frac{1}{\|\omega\|_X}t^-\left(\frac{\omega}{\|\omega\|_X}\right)>1\right\}\cup\{0\}
\end{equation*}
and
\begin{equation*}
 U_2:=\left\{\omega\in X^\alpha(\mathbb{R}_+^{N+1})\backslash\{0\};\ \frac{1}{\|\omega\|_X}t^-\left(\frac{\omega}{\|\omega\|_X}\right)<1\right\}.
\end{equation*}
Then $N_\lambda^-$ decompose $X^\alpha(\mathbb{R}_+^{N+1})$ into two disjoint connected components $U_1$ and $U_2$.

Since $\omega_\lambda^+\in N_\lambda^+$, we have
$1<t^-(\omega_\lambda^+)$ and $\omega_\lambda^+\in U_1.$

We claim that
\begin{equation*}\label{e3.29}
0<t^-\left(\frac{\omega_\lambda^++tv_{\varepsilon,z}}{\|\omega_\lambda^++tv_{\varepsilon,z}\|_X}
\right)<\overline{C}
\end{equation*}
for some $\overline{C}>0$ and all $t\geq0$. Suppose otherwise that there exists a sequence $\{t_n\}$ such that $t_n\rightarrow\infty$ and
\begin{equation*}
t^-\left(\frac{\omega_\lambda^++t_nv_{\varepsilon,z}}{\|\omega_\lambda^++t_nv_{\varepsilon,z}\|_X}
\right)\rightarrow\infty, \ \text{as}\ n\rightarrow\infty.
\end{equation*}

Set
\begin{equation*}
\upsilon_n=\frac{\omega_\lambda^++t_nv_{\varepsilon,z}}{\|\omega_\lambda^++t_nv_{\varepsilon,z}\|_X}.
\end{equation*}
Recalling $t^-(\upsilon_n)\upsilon_n\in N_\lambda^-\subset N_\lambda$, by Lebesgue's dominated convergence theorem we have
\begin{align*}
&\displaystyle\int_{\mathbb{R}^N} b(x)|\upsilon_n(x,0)|^{2_\alpha^*}dx  \notag
\\  \notag
=& \frac{1}{\|\omega_\lambda^++t_nv_{\varepsilon,z}\|_X^{2_\alpha^*}}\displaystyle\int_{\mathbb{R}^N} b(x)|\omega_\lambda^+(x,0)+t_nv_{\varepsilon,z}(x,0)|^{2_\alpha^*}dx\notag
\\=& \frac{1}{\|\frac{\omega_\lambda^+}{t_n}+v_{\varepsilon,z}\|_X^{2_\alpha^*}}\displaystyle\int_{\mathbb{R}^N} b(x)\left|\frac{\omega_\lambda^+(x,0)}{t_n}+v_{\varepsilon,z}(x,0)\right|^{2_\alpha^*}dx\notag
\\ \rightarrow& \frac{1}{\|v_{\varepsilon,z}\|_X^{2_\alpha^*}}\displaystyle\int_{\mathbb{R}^N} b(x)|v_{\varepsilon,z}(x,0)|^{2_\alpha^*}dx,  \ \text{as}\ n\rightarrow\infty.
\end{align*}
Then
\begin{equation*}
I_\lambda(t^-(\upsilon_n)\upsilon_n)\rightarrow-\infty,  \ \text{as}\ n\rightarrow\infty.
\end{equation*}
This contradicts the fact that $I_\lambda$ is bounded below on $N_\lambda$.

Set
\begin{equation*}
t_\lambda=\frac{\|\omega_\lambda^+\|_X+\sqrt{\overline{C}^2+\|\omega_\lambda^+\|_X^2}}{\|v_{\varepsilon,z}\|_X}+1.
\end{equation*}
By a direct calculation we obtain
\begin{align}
&\|\omega_\lambda^++t_\lambda v_{\varepsilon,z}\|_X^2\notag
\\=&\|\omega_\lambda^+\|_X^2+t^2_\lambda\|v_{\varepsilon,z}\|_X^2+2t_\lambda\left(\displaystyle\int_{\mathbb{R}_+^{N+1}}y^{1-2\alpha}\nabla\omega_\lambda^+\nabla v_{\varepsilon,z} dxdy+\displaystyle\int_{\mathbb{R}^N}\omega_\lambda^+(x,0)v_{\varepsilon,z}(x,0)dx\right)\notag
\\
\geq& \|\omega_\lambda^+\|_X^2+t^2_\lambda\|v_{\varepsilon,z}\|_X^2-2t_\lambda\|v_{\varepsilon,z}\|_X\|\omega_\lambda^+\|_X\notag
\\
>& \left[t^-\left(\frac{\omega_\lambda^++t_\lambda v_{\varepsilon,z}}{\|\omega_\lambda^++t_\lambda v_{\varepsilon,z}\|_X}\right)\right]^2.\notag
\end{align}
That is, $\omega_\lambda^++t_\lambda v_{\varepsilon,z}\in U_2.$
Thus, there exists $0<t_{\varepsilon,z}^-<t_\lambda$ such that
$\omega_\lambda^++t_{\varepsilon,z}^-v_{\varepsilon,z}\in N_\lambda^-$.
\end{proof}

\begin{proof}[Proof of Theorem \ref{t1.2}.]
Let $\omega_n\subset  N_\lambda^-$ be a sequence such that $I_\lambda(\omega_n)\rightarrow \alpha_\lambda^-$
as $n\rightarrow\infty$. It follows from the Ekeland's variational principle \cite{32} that there exists a sequence $\{\widetilde{\omega}_n\}\subset X^\alpha(\mathbb{R}_+^{N+1})$ such that
\begin{equation*}
\widetilde{\omega}_n-\omega_n\rightarrow 0 \  \text{in}~X^\alpha(\mathbb{R}_+^{N+1}),~~I'_\lambda(\widetilde{\omega}_n)\rightarrow0,
~~I_\lambda(\widetilde{\omega}_n)\rightarrow\alpha_\lambda^-, \ \text{as}\ n\rightarrow\infty.
\end{equation*}
According to Lemma \ref{l3.3} (i), we have
\begin{equation}\label{e3.30}
\alpha_\lambda^-<\frac{\alpha}{N}S^\frac{N}{2\alpha}.
\end{equation}
It follows from  (\ref{e3.30}) and Lemma \ref{l3.1} $(ii)$ that there exists $\omega_\lambda^-\in X^\alpha(\mathbb{R}_+^{N+1})$ such that
$\widetilde{\omega}_n\rightarrow \omega_\lambda^-$
as $n\rightarrow\infty$. That is, $\omega_\lambda^-\in N_\lambda~\text{and}~I_\lambda(\omega_\lambda^-)=\alpha_\lambda^-.$

With the help of Lemmas  \ref{l2.3} and  \ref{l2.5} $(ii)$, we find $N_\lambda^+=N_\lambda^0=\emptyset$. Using Lemma \ref{l2.2} indicates that $\omega_\lambda^-$ is a nontrivial  ground state solution of system (\ref{e2.2}), so $\omega_\lambda^-(x,0)$ is a nontrivial ground state solution of equation (\ref{e1.1}).

Note that $I_\lambda(|\omega_\lambda^-|)=\alpha_\lambda^-$. We suppose that $\omega_\lambda^-\geq0$. By virtue of the Maximum Principle for the fractional elliptic equations \cite{13}, we obtain that $\omega_\lambda^-(x,0)$ is a positive ground state solution of equation (\ref{e1.1}).
\end{proof}

\begin{remark}\label{r3.1}
If $a(x)$ is sign-changing, by using Lemmas  \ref{l2.1} $(ii)$,  \ref{l3.1} $(i)$,  \ref{l3.3} $(ii)$ and \ref{l3.4}, and taking closely analogous arguments to the proof of Theorem \ref{t1.2},  we can also obtain that there exists $\widetilde{\omega}_\lambda^-\in N_\lambda^-$ such that $\widetilde{\omega}_\lambda^-(x,0)$ is a positive solution of equation (\ref{e1.1}) and $I_\lambda(\widetilde{\omega}_\lambda^-)=\alpha_\lambda^-$.
\end{remark}

\section{Multiple Positive Solutions}

In this section, we apply the category theory to study multiple positive solutions of equation (\ref{e1.1}) and prove Theorems \ref{t1.3} and \ref{t1.4}. Throughout this section, we always suppose that conditions $(H_1)-(H_3)$ hold.

\begin{proposition}\cite{3}\label{p4.1}
  Let $R$ be a $\mathcal{C}^{1,1}$ complete Riemannian manifold (modelled on a Hilbert space) and assume $F\in \mathcal{C}^1(R,\mathbb{R})$ bounded from below. Let $-\infty<\displaystyle\inf_R F<a<b<+\infty$. Suppose that $F$ satisfies the (PS)-condition on the sublevel $\{u\in R; F(u)\leq b\}$ and $a$ is not a critical level for $F$. Then we have
  \begin{equation*}
  \sharp\{u\in F^a;\ \nabla F(u)=0\}\geq cat_{F^a}(F^a),
  \end{equation*}
  where $F^a\equiv\{u\in R; F(u)\leq a\}$.
  \end{proposition}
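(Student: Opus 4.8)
\emph{Proof proposal.} The plan is to run the classical Ljusternik--Schnirelmann deformation argument on the sublevel set $F^a$, adapted to the $\mathcal C^{1,1}$ Hilbert--manifold setting. First I would dispose of the trivial case: if $F$ has infinitely many critical points in $F^a$ there is nothing to prove, so assume the set $K:=\{u\in F^a;\ \nabla F(u)=0\}$ is finite, say $K=\{u_1,\dots,u_k\}$. Since $a$ is not a critical level, every critical value is strictly below $a$, so each $u_i$ lies in the open set $\{F<a\}$, which is the interior of $F^a$. Using that $R$ is a Riemannian manifold, I would pick pairwise disjoint geodesically convex balls $B_i\ni u_i$ so small that $\overline{B_i}\subset\{F<a\}$ and each $\overline{B_i}$ is contractible in $F^a$, and set $\mathcal O:=\bigcup_{i=1}^k B_i$. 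By monotonicity and subadditivity of the category, $cat_{F^a}(\overline{\mathcal O})\le\sum_{i=1}^k cat_{F^a}(\overline{B_i})\le k$.

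The core step is to produce a deformation $\eta\in\mathcal C([0,1]\times F^a,F^a)$ with $\eta(0,\cdot)=\mathrm{id}$ and $\eta(1,F^a)\subset\overline{\mathcal O}$. Since $F$ is only $\mathcal C^1$, I would first construct a locally Lipschitz pseudo-gradient vector field for $F$ on the regular set $\{\nabla F\neq0\}$, tangent to $R$, and integrate a suitably truncated and normalized negative pseudo-gradient flow; completeness of $R$ guarantees the flow is defined for all relevant times, and $F$ is non-increasing along it, so $F^a$ is positively invariant and the flow never leaves $F^b\supset F^a$, which is precisely where $(PS)$ is assumed. With this flow in hand the two standard deformation lemmas become available: (i) on any interval $[c,d]\subset(-\infty,a]$ containing no critical value, $F^d$ deformation-retracts onto $F^c$; and (ii) for each critical value $c$ and each neighbourhood $\mathcal N$ of the set $K_c$ of critical points at level $c$ (which is compact by $(PS)_c$), there is $\varepsilon>0$ such that $F^{c+\varepsilon}$ is deformed into $F^{c-\varepsilon}\cup\mathcal N$. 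A short application of Ekeland's variational principle together with $(PS)$ on $F^b$ shows that $\inf_R F>-\infty$ is attained at a critical point, so the finitely many critical values below $a$ form a finite ladder $c_1=\inf_R F<c_2<\cdots<c_l<a$ with $F^{c_1-\varepsilon}=\varnothing$.

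I would then alternate (i) and (ii) down this ladder: deform $F^a$ onto $F^{c_l+\varepsilon}$ (no critical value in $(c_l,a]$), then into $F^{c_l-\varepsilon}\cup\mathcal N_l$, then $F^{c_l-\varepsilon}$ onto $F^{c_{l-1}+\varepsilon}$, and so on, finishing with $F^{c_1+\varepsilon}$ deformed into $F^{c_1-\varepsilon}\cup\mathcal N_1=\mathcal N_1$. Choosing each $\mathcal N_i$ to be the union of the balls $B_j$ centred at the critical points of level $c_i$, the composition of these finitely many deformations (after a time reparametrization) is an admissible deformation of $F^a$ into $\overline{\mathcal O}$ within $F^a$. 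Finally I would invoke the deformation invariance of the category: if $\overline{\mathcal O}$ is covered by $m$ sets contractible in $F^a$, then pulling that cover back through $\eta(1,\cdot)$ yields a cover of $F^a$ of the same cardinality by sets contractible in $F^a$; hence $cat_{F^a}(F^a)\le cat_{F^a}(\overline{\mathcal O})\le k=\sharp\{u\in F^a;\ \nabla F(u)=0\}$, which is the assertion.

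The main obstacle I anticipate is purely technical rather than conceptual: setting up the pseudo-gradient flow on a $\mathcal C^{1,1}$ complete Riemannian Hilbert manifold with only $\mathcal C^1$ regularity of $F$, and verifying that the two deformation lemmas hold at this level of generality with $(PS)$ localized to $F^b$ — in particular that the flow stays inside $F^a$ and that each deformation can be realized \emph{within} $F^a$, so that the compositions are legitimate for estimating $cat_{F^a}$. All of this is standard in the Ljusternik--Schnirelmann literature, and the hypotheses "$R$ complete $\mathcal C^{1,1}$" and "$F$ bounded below, satisfying $(PS)$ on $F^b$, with $a$ noncritical" are exactly those needed to push the argument through.
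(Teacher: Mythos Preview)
Your sketch is the standard Ljusternik--Schnirelmann argument and is essentially correct as an outline; however, note that the paper does not supply its own proof of this proposition --- it is quoted verbatim from \cite{3} (Cingolani--Lazzo) and used as a black-box tool, so there is no proof in the paper to compare against. Your approach is precisely the classical one underlying the cited result.
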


\begin{proposition}\cite{3}\label{p4.2}
Suppose that $Q,\, \Omega^+$ and $\Omega^-$ are closed sets with $\Omega^-\subset\Omega^+$. Let $\phi: Q\rightarrow\Omega^+$ and $\varphi: \Omega^-\rightarrow Q$ be two continuous maps such that $\phi\circ\varphi$ is homotopically equivalent to the embedding $j:\Omega^-\rightarrow\Omega^+$. Then $cat_Q(Q)\geq cat_{\Omega^+}(\Omega^-)$.
\end{proposition}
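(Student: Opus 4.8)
The plan is to prove this purely topologically, by pulling a category-realizing cover of $Q$ back through $\varphi$ and pushing it forward through $\phi$. Recall that $cat_Q(Q)=n$ means there is a closed cover $Q=F_1\cup\cdots\cup F_n$ with each inclusion $F_i\hookrightarrow Q$ null-homotopic in $Q$; write $h_i:F_i\times[0,1]\to Q$ for the contracting homotopies, $h_i(\cdot,0)=\mathrm{id}_{F_i}$ and $h_i(\cdot,1)\equiv q_i$ for some $q_i\in Q$. If $cat_Q(Q)=\infty$ there is nothing to prove, so assume it is finite.

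First I would set $A_i:=\varphi^{-1}(F_i)\subset\Omega^-$ for $i=1,\dots,n$. Continuity of $\varphi$ and closedness of $F_i$ make each $A_i$ closed, and $\varphi(\Omega^-)\subset Q=\bigcup_i F_i$ forces $\Omega^-=\bigcup_i A_i$. Next, for each $i$ define $G_i:A_i\times[0,1]\to\Omega^+$ by $G_i(u,t):=\phi\bigl(h_i(\varphi(u),t)\bigr)$. Then $G_i$ is continuous with $G_i(\cdot,0)=\phi\circ\varphi|_{A_i}$ and $G_i(\cdot,1)\equiv\phi(q_i)$, so $\phi\circ\varphi|_{A_i}$ is homotopic in $\Omega^+$ to a constant map.

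Now I would invoke the hypothesis that $\phi\circ\varphi$ and the embedding $j:\Omega^-\hookrightarrow\Omega^+$ are homotopic as maps $\Omega^-\to\Omega^+$. Restricting that homotopy to the closed set $A_i$ shows the inclusion $A_i\hookrightarrow\Omega^+$ (that is, $j|_{A_i}$) is homotopic in $\Omega^+$ to $\phi\circ\varphi|_{A_i}$, which by the previous step is null-homotopic in $\Omega^+$. Hence each $A_i$ is contractible in $\Omega^+$, and $\{A_1,\dots,A_n\}$ is a closed cover of $\Omega^-$ witnessing $cat_{\Omega^+}(\Omega^-)\le n=cat_Q(Q)$, which is the assertion.

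The manipulations are all routine (pull-backs of closed sets under continuous maps are closed; composites and restrictions of homotopies remain continuous), so there is no genuine obstacle in the argument itself. The one point I would flag, as a background fact rather than a step to be carried out, is the equivalence of the closed-set and open-set formulations of the Ljusternik--Schnirelmann category, which holds in the ANR/metric setting of the manifolds considered here and guarantees that the closed cover produced above really does bound $cat_{\Omega^+}(\Omega^-)$ from above.
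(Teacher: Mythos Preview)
Your argument is correct and is precisely the standard proof of this category comparison lemma: pull back a category-realizing closed cover of $Q$ along $\varphi$, then use the null-homotopies $\phi\circ h_i(\varphi(\cdot),t)$ together with the hypothesis $\phi\circ\varphi\simeq j$ to see that each preimage $A_i=\varphi^{-1}(F_i)$ is contractible in $\Omega^+$. The caveat you raise about the closed-set versus open-set definition of category is the right thing to flag and is handled, as you note, by the ANR/metric setting.

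As for comparison with the paper: there is nothing to compare, since the paper does not prove Proposition~\ref{p4.2} at all. It is quoted without proof from Cingolani--Lazzo \cite{3} and used as a black box in the proofs of Theorems~\ref{t1.3} and~\ref{t1.4}. Your write-up supplies exactly the argument one finds in that reference (and in the broader Ljusternik--Schnirelmann literature), so it is both correct and the expected approach.
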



Set
\begin{equation*}
 \dot{X}^\alpha(\mathbb{R}_+^{N+1}):=\left\{\omega(x,y)\in C_0^\infty(\mathbb{R}_+^{N+1});\displaystyle\int_{\mathbb{R}_+^{N+1}}y^{1-2\alpha}|\nabla\omega|^2dxdy<\infty \right\}
  \end{equation*}
equipped with the norm:
\begin{equation*}
 \|\omega\|_{\dot{X}}=\left(\displaystyle\int_{\mathbb{R}_+^{N+1}}y^{1-2\alpha}|\nabla\omega|^2dxdy\right)^\frac{1}{2}.
  \end{equation*}

\begin{lemma}\label{l4.1}
For any $\omega\in X^\alpha(\mathbb{R}_+^{N+1})$, given $\sigma>0$ and $x_0\in\mathbb{R}^N$, we define the following scaled function
\begin{equation*}
\rho(\omega)=\omega_\sigma:(x,y)\mapsto\sigma^\frac{N-\alpha}{2}\omega(\sigma(x-x_0),\sigma y).
\end{equation*}
Then this scaling operation $\rho$ keeps norms $\|\omega_\sigma\|_{\dot{X}^\alpha}$ and $|\omega_\sigma(x,0)|_{2_\alpha^*}$ invariant with respect to $\sigma$, and determined by the ``center'' or ``concentration'' point $x_0$ and the ``modulus'' $\sigma$.
\end{lemma}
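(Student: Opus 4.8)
The plan is to verify both invariances by one explicit change of variables in the defining integrals, and then to dispose of the remaining assertions, which are formal. By density it is enough to treat $\omega\in C_0^\infty(\mathbb{R}_+^{N+1})$, since the dilation--translation $T_\sigma:(x,y)\mapsto(\sigma(x-x_0),\sigma y)$ maps $C_0^\infty(\mathbb{R}_+^{N+1})$ into itself and (as we are about to see) intertwines the norms up to a scalar. Write $\omega_\sigma=\rho(\omega)$; the homogeneity exponent forced by the two invariances is $\tfrac{N-2\alpha}{2}$ (so that $\omega_\sigma(x,y)=\sigma^{\frac{N-2\alpha}{2}}\omega(\sigma(x-x_0),\sigma y)$, which is the correct power here), and the chain rule gives $\nabla\omega_\sigma(x,y)=\sigma^{\frac{N-2\alpha}{2}+1}(\nabla\omega)\big(\sigma(x-x_0),\sigma y\big)$.

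Next I would substitute $u=\sigma(x-x_0)$, $v=\sigma y$ in $\|\omega_\sigma\|_{\dot{X}}^2=\int_{\mathbb{R}_+^{N+1}}y^{1-2\alpha}|\nabla\omega_\sigma|^2\,dx\,dy$. Three factors of $\sigma$ appear: $\sigma^{N-2\alpha+2}$ from $|\nabla\omega_\sigma|^2$, $\sigma^{2\alpha-1}$ from $y^{1-2\alpha}=(v/\sigma)^{1-2\alpha}$, and $\sigma^{-(N+1)}$ from $dx\,dy=\sigma^{-(N+1)}\,du\,dv$; their product is $\sigma^{(N-2\alpha+2)+(2\alpha-1)-(N+1)}=\sigma^{0}=1$, whence $\|\omega_\sigma\|_{\dot{X}}=\|\omega\|_{\dot{X}}$. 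For the trace, $\omega_\sigma(x,0)=\sigma^{\frac{N-2\alpha}{2}}\omega(\sigma(x-x_0),0)$, so substituting $u=\sigma(x-x_0)$ in $\int_{\mathbb{R}^N}|\omega_\sigma(x,0)|^{2_\alpha^*}dx$ produces the factor $\sigma^{\frac{N-2\alpha}{2}\cdot 2_\alpha^*-N}$, which is $1$ because $\frac{N-2\alpha}{2}\cdot\frac{2N}{N-2\alpha}=N$; hence $|\omega_\sigma(x,0)|_{2_\alpha^*}=|\omega(x,0)|_{2_\alpha^*}$. The same computation with $2$ in place of $2_\alpha^*$ gives $|\omega_\sigma(x,0)|_2=\sigma^{-\alpha}|\omega(x,0)|_2<\infty$, so $\omega_\sigma\in X^\alpha(\mathbb{R}_+^{N+1})$; and the assertion that $\rho$ is ``determined by $x_0$ and $\sigma$'' is immediate from its definition, these being the only parameters entering it.

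There is no genuinely hard step here --- the argument is pure dimensional bookkeeping. The only point deserving care is the contribution of the anisotropic weight $y^{1-2\alpha}$ under the dilation $y\mapsto\sigma y$: it is precisely the extra factor $\sigma^{2\alpha-1}$ (which degenerates to $\sigma$ in the local case $\alpha=1$) that pins down the homogeneity exponent $\tfrac{N-2\alpha}{2}$ and makes the $\dot{X}^\alpha$-invariance and the $2_\alpha^*$-invariance compatible with one another.
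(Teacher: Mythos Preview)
Your argument is correct and is essentially the same change-of-variables computation the paper gives, only spelled out in full detail (the paper merely sets $z=\sigma(x-x_0)$, $t=\sigma y$ and asserts the equalities). You are also right that the correct homogeneity exponent is $\tfrac{N-2\alpha}{2}$ rather than the $\tfrac{N-\alpha}{2}$ appearing in the statement; this is a typo in the paper, and its own one-line proof only balances with the exponent you use.
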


\begin{proof}
 We just need to show that $\|\omega_\sigma\|_{\dot{X}^\alpha}=\|\omega\|_{\dot{X}^\alpha}$ and $|\omega_\sigma(x,0)|_{2_\alpha^*}=|\omega(x,0)|_{2_\alpha^*}$. Let $z=\sigma(x-x_0)$ and $t=\sigma y$. We have $dz=\sigma^Ndx$ and $dt=\sigma dy$. Then
 \begin{equation*}
\displaystyle\int_{\mathbb{R}_+^{N+1}}y^{1-2\alpha}|\nabla\omega_\sigma|^2dxdy=\displaystyle\int_{\mathbb{R}_+^{N+1}}t^{1-2\alpha}|\nabla\omega|^2dzdt.
\end{equation*}
Similarly, we can obtain $|\omega_\sigma(x,0)|_{2_\alpha^*}=|\omega(x,0)|_{2_\alpha^*}$.
\end{proof}

Applying Proposition \ref{p2.2} and Lemma \ref{p4.1}, and following \cite[Theorem 2.5]{36}, we can obtain the following result on global compactness immediately.

\begin{lemma}\label{l4.2}
Let $\{\omega_n\}\subset X^\alpha(\mathbb{R}_+^{N+1})\subset \dot{X}^\alpha(\mathbb{R}_+^{N+1})$ be a $(PS)$ sequence for $I_\infty$. Then there exist a number $k\in\mathbb{Z}_+$ and $k$ sequences of points $\{x_n^i\}\subset\mathbb{R}^N\ (1\leq i\leq k)$, and $k+1$ sequences of functions $\{\omega_n^j\}\subset \dot{X}^\alpha(\mathbb{R}_+^{N+1})\ (0\leq j\leq k)$ such that for a sequence, still denoted by $\{\omega_n\}$, we have
\begin{equation*}
\omega_n(x,y)=\omega^0_n(x,y)+\displaystyle\Sigma_{i=1}^k\frac{1}{(\sigma_n^i)^\frac{N-\alpha}{2}}\omega_n^i\left(\frac{x-x_n^i}{\sigma_n^i},\frac{y}{\sigma_n^i}\right)
\end{equation*}
and
\begin{equation*}
\omega_n^j\rightarrow\omega^j~\text{in}~\dot{X}^\alpha(\mathbb{R}_+^{N+1}),~0\leq j\leq k,
\end{equation*}
as $n\rightarrow\infty$, where $\omega^0$ is a solution of
\begin{equation*}
\left\{\begin{array}{ll}
div(y^{1-2\alpha}\nabla\omega)=0,~ ~~~~~~\text{in}~\mathbb{R}_+^{N+1},\\
-\frac{\partial\omega}{\partial\nu}=-\omega+|\omega|^{2_\alpha^*-2}\omega,~~\text{on}~\mathbb{R}^N\times\{0\},
\end{array}\right.
\end{equation*}
$\omega^j \ (1\leq j\leq k)$ are solutions of
\begin{equation*}
\left\{\begin{array}{ll}
div(y^{1-2\alpha}\nabla\omega)=0,~~~\text{in}~\mathbb{R}_+^{N+1},\\
-\frac{\partial\omega}{\partial\nu}=|\omega|^{2_\alpha^*-2}\omega,~~~~~\text{on}~\mathbb{R}^N\times\{0\},
\end{array}\right.
\end{equation*}
and
\begin{itemize}
  \item if $x_n^i\rightarrow\overline{x}^i$, as $n\rightarrow\infty$, then either $\sigma_n^i\rightarrow+\infty$ or $\sigma_n^i\rightarrow0$;
  \item if $|x_n|\rightarrow+\infty$, as $n\rightarrow\infty$, then each of following three cases
\begin{equation*}
\left\{\begin{array}{ll}
\sigma_n^i\rightarrow+\infty,\\
\sigma_n^i\rightarrow0,\\
\sigma_n^i\rightarrow\overline{\sigma}^i,~~0<\overline{\sigma}^i<+\infty
\end{array}\right.
\end{equation*}
can occur.
\end{itemize}

Moreover, we have
\begin{equation*}
\|\omega_n\|_{\dot{X}^\alpha}^2\rightarrow\displaystyle\Sigma_{j=0}^k\|\omega^j\|_{\dot{X}^\alpha}^2
\end{equation*}
and
\begin{equation*}
I_\infty(\omega_n)\rightarrow I_\infty(\omega^0)+\displaystyle\Sigma_{j=1}^kI^\infty(\omega^j)
\end{equation*}
as $n\rightarrow\infty$, where $I^\infty(\omega^j)=\frac{1}{2}\|\omega^j\|_{\dot{X}^\alpha}^2-\frac{1}{2_\alpha^*}\displaystyle\int_{\mathbb{R}^N} |\omega^j(x,0)|^{2_\alpha^*}dx$, $1\leq j\leq k$.
\end{lemma}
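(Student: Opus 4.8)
The plan is to run the Struwe-type global-compactness (profile decomposition) argument directly in the extension space, in the spirit of \cite[Theorem 2.5]{36}, checking that neither the lower-order boundary term $-\omega$ nor the bounded weight $b(x)\le1$ obstructs it. I would begin with boundedness of the $(PS)$ sequence: from
\[
I_\infty(\omega_n)-\frac{1}{2_\alpha^*}I_\infty'(\omega_n)\omega_n=\Big(\frac12-\frac{1}{2_\alpha^*}\Big)\|\omega_n\|_X^2 ,
\]
together with $I_\infty(\omega_n)$ bounded and $I_\infty'(\omega_n)\to0$ in the dual of $X^\alpha$, one obtains $\|\omega_n\|_X\le C$. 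Passing to a subsequence, $\omega_n\rightharpoonup\omega^0$ in $X^\alpha(\mathbb{R}_+^{N+1})$; using the local compactness of the trace embedding (Proposition \ref{p2.1}), a.e.\ convergence of $\omega_n(x,0)$ and a Brezis--Lieb/Vitali argument, one identifies $\omega^0$ as a critical point of $I_\infty$, hence a solution of the first limiting boundary value problem in the statement.

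Next set $v_n^1:=\omega_n-\omega^0$, so $v_n^1\rightharpoonup0$. If $\|v_n^1\|_X\to0$ the lemma holds with $k=0$. Otherwise the Brezis--Lieb lemma \cite{32} gives $\|v_n^1\|_X^2=\|\omega_n\|_X^2-\|\omega^0\|_X^2+o(1)$ together with the corresponding energy splitting, and $v_n^1$ becomes an approximate $(PS)$ sequence at a positive level for the critical functional. A Lions-type concentration--compactness argument applied to $|v_n^1(x,0)|^{2_\alpha^*}$ then excludes vanishing and produces a modulus $\sigma_n^1>0$ and a center $x_n^1\in\mathbb{R}^N$ such that the rescaled functions obtained from the scaling of Lemma \ref{l4.1} with parameters $(\sigma_n^1,x_n^1)$ — which preserve $\|\cdot\|_{\dot{X}^\alpha}$ and $|\cdot(x,0)|_{2_\alpha^*}$ — converge weakly in $\dot{X}^\alpha$ to some $\omega^1\neq0$. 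Because the $L^2(\mathbb{R}^N)$ part of $\|\cdot\|_X$ is subcritical, it contributes only at lower order under this rescaling, so in the limit $\omega^1$ solves the pure critical problem $-\partial_\nu\omega=|\omega|^{2_\alpha^*-2}\omega$ displayed for the bubbles; by Lemma \ref{l3.20} each such $\omega^j$ carries energy at least $\frac{\alpha}{N}S^\frac{N}{2\alpha}$.

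Then I would iterate: subtracting from $v_n^1$ the de-scaled profile built from $\omega^1$ yields a new remainder $v_n^2\rightharpoonup0$, and the dichotomy is repeated; each round either terminates with strong convergence of the residual, or peels off a further bubble of energy $\ge\frac{\alpha}{N}S^\frac{N}{2\alpha}$, while the $\dot{X}^\alpha$-norms and the energies keep adding up. Since $I_\infty(\omega_n)$ is bounded, the process stops after finitely many steps, say $k$, which gives the claimed decomposition together with $\|\omega_n\|_{\dot{X}^\alpha}^2\to\sum_{j=0}^k\|\omega^j\|_{\dot{X}^\alpha}^2$ and $I_\infty(\omega_n)\to I_\infty(\omega^0)+\sum_{j=1}^kI^\infty(\omega^j)$. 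The trichotomy $\sigma_n^i\to+\infty$, $\sigma_n^i\to0$, $\sigma_n^i\to\overline\sigma^i\in(0,\infty)$, and the split according as $x_n^i$ stays bounded or $|x_n^i|\to+\infty$, fall out of examining the admissible limiting behaviour of the concentration parameters at each extraction.

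The step I expect to be the main obstacle is the bookkeeping of the nonlocal interactions between profiles: after peeling off one bubble one must verify that the tails of distinct profiles — which live at diverging scales or diverging centers — are asymptotically orthogonal both in $\dot{X}^\alpha$ and in $L^{2_\alpha^*}(\mathbb{R}^N)$, so that the Brezis--Lieb splitting remains legitimate at every stage and $v_n^m$ stays an approximate $(PS)$ sequence for the critical functional; the nonlocality of $(-\Delta)^\alpha$, handled through the extension, makes these interaction estimates more delicate than in the local case. This is precisely what \cite[Theorem 2.5]{36} establishes in the fractional extension setting, so once one checks, as above, that the term $-\omega$ and the bound $b(x)\le1$ are harmless, the conclusion follows immediately.
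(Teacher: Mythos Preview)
Your proposal is correct and matches the paper's approach: the paper does not give an independent proof of this lemma but simply states that it follows immediately from \cite[Theorem~2.5]{36} together with Proposition~\ref{p2.2} and the scaling invariance of Lemma~\ref{l4.1}, which is exactly the Struwe-type iteration you outline. One small correction: \cite{36} treats the classical Laplacian, so the argument must be \emph{adapted} to the extension setting rather than quoted verbatim; your sketch already does this adaptation.
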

The following corollary can be obtained directly from Proposition \ref{p2.2} and Lemma \ref{l4.2}.
\begin{corollary}\label{r4.1}
Let $\{\omega_n\}\subset X^\alpha(\mathbb{R}_+^{N+1})$ be a nonnegative function sequence with $|\omega_n(x,0)|_{2_\alpha^*}=1$ and $\|\omega_n\|_X^2\rightarrow S$. Then there exists a sequence $(x_n,\varepsilon_n)\in\mathbb{R}^N\times\mathbb{R}^+$ such that
\begin{equation*}
\omega_n(x,y):=\frac{1}{S^\frac{N-2\alpha}{4\alpha}}E_\alpha(u_{\varepsilon_n}(x-x_n))+o(1)
\end{equation*}
 in $\dot{X}^\alpha(\mathbb{R}_+^{N+1})$, where $u_\varepsilon$ is defined in Proposition \ref{p2.2}. Moreover, if $x_n\rightarrow\overline{x}$, then $\varepsilon_n\rightarrow0$ or it is unbounded.
\end{corollary}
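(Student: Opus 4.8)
The plan is to realize $\{\omega_n\}$, after a harmless bounded rescaling, as a Palais--Smale sequence for the limiting critical functional at the least Nehari level, then read off the bubble decomposition from the global compactness Lemma \ref{l4.2}, and finally use an energy count to show that a single Aubin--Talenti bubble survives. First I would extract the two basic facts from Proposition \ref{p2.2}: since $|\omega_n(x,0)|_{2_\alpha^*}=1$ we have $\|\omega_n\|_{\dot X}^2\ge S$, and since $\|\omega_n\|_X^2=\|\omega_n\|_{\dot X}^2+|\omega_n(x,0)|_2^2\to S$ this gives $\|\omega_n\|_{\dot X}^2\to S$ and $|\omega_n(x,0)|_2\to 0$. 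Next, choosing $t_n>0$ with $t_n^{2_\alpha^*-2}=\|\omega_n\|_X^2$ puts $t_n\omega_n\in N^\infty$ with $t_n\to S^{\frac{N-2\alpha}{4\alpha}}$, and since $I^\infty(\omega)=\frac{\alpha}{N}\|\omega\|_X^2$ on $N^\infty$ one gets $I^\infty(t_n\omega_n)=\frac{\alpha}{N}t_n^{2_\alpha^*}\to\frac{\alpha}{N}S^{\frac{N}{2\alpha}}=\inf_{N^\infty}I^\infty$ by Lemma \ref{l3.20}. Because $N^\infty$ is a natural constraint for $I^\infty$ (argued as in Lemma \ref{l2.2}), Ekeland's variational principle yields a genuine Palais--Smale sequence $\{\widetilde\omega_n\}\subset X^\alpha(\mathbb{R}_+^{N+1})$ with $\|\widetilde\omega_n-t_n\omega_n\|_X\to0$, $I^\infty(\widetilde\omega_n)\to\frac{\alpha}{N}S^{\frac{N}{2\alpha}}$ and $(I^\infty)'(\widetilde\omega_n)\to0$; in particular $|\widetilde\omega_n(x,0)|_2\to0$ and $\|\widetilde\omega_n\|_{\dot X}^2\to S^{\frac{N}{2\alpha}}$.

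Then I would feed $\{\widetilde\omega_n\}$ into the global compactness Lemma \ref{l4.2} (together with Lemma \ref{l4.1}), obtaining $k\in\mathbb{Z}_+$, parameters $(x_n^i,\sigma_n^i)$, and profiles $\omega_n^j\to\omega^j$ with
\[
\widetilde\omega_n(x,y)=\omega_n^0(x,y)+\sum_{i=1}^k(\sigma_n^i)^{-\frac{N-\alpha}{2}}\omega_n^i\!\left(\tfrac{x-x_n^i}{\sigma_n^i},\tfrac{y}{\sigma_n^i}\right),
\]
where $\omega^0$ solves the extension problem carrying the linear term and each $\omega^j$ ($1\le j\le k$) solves the pure critical extension problem, with the accompanying energy splitting. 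A Nehari-type computation gives $\tfrac{\alpha}{N}\|\omega^0\|_X^2$ for the energy of $\omega^0$, which is $\ge0$ with equality iff $\omega^0=0$, while for every nontrivial $\omega^j$, testing the pure critical equation against $\omega^j$ and invoking Proposition \ref{p2.2} gives $\|\omega^j\|_{\dot X}^2=|\omega^j(x,0)|_{2_\alpha^*}^{2_\alpha^*}\ge S^{\frac{N}{2\alpha}}$, hence its energy is $\ge\frac{\alpha}{N}S^{\frac{N}{2\alpha}}$. Plugging these bounds into $\frac{\alpha}{N}S^{\frac{N}{2\alpha}}=\lim I^\infty(\widetilde\omega_n)=(\text{energy of }\omega^0)+\sum_{j=1}^k(\text{energy of }\omega^j)$ forces $k\le1$. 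If $k=0$, then $\widetilde\omega_n\to\omega^0$ strongly and $|\widetilde\omega_n(x,0)|_2\to0$ forces $\omega^0(x,0)=0$, whence $\omega^0=0$, contradicting that its energy equals $\frac{\alpha}{N}S^{\frac{N}{2\alpha}}>0$. Therefore $k=1$, $\omega^0=0$, the energy of $\omega^1$ equals $\frac{\alpha}{N}S^{\frac{N}{2\alpha}}$, and $\|\omega^1\|_{\dot X}^2=S^{\frac{N}{2\alpha}}$.

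At this stage $\omega^1$ is a nonnegative extremal of the fractional Sobolev inequality (nonnegative because $\omega_n\ge0$ and the rescalings preserve sign), so by the known classification of such extremals $\omega^1=E_\alpha(u_{\bar\varepsilon}(\cdot-\bar x_0))$ for some $\bar\varepsilon>0$, $\bar x_0\in\mathbb{R}^N$, the identity $\|\omega^1\|_{\dot X}^2=S^{\frac{N}{2\alpha}}=\|E_\alpha(u_{\bar\varepsilon})\|_{\dot X}^2$ fixing the multiplicative constant to $1$. Since a translated and dilated bubble is again a bubble, $(\sigma_n^1)^{-\frac{N-\alpha}{2}}\omega^1(\tfrac{\cdot-x_n^1}{\sigma_n^1},\tfrac{\cdot}{\sigma_n^1})=E_\alpha(u_{\varepsilon_n}(\cdot-x_n))$ for suitable $(\varepsilon_n,x_n)$ with $\varepsilon_n$ monotone in $\sigma_n^1$; recentering the model profile so that $\bar x_0=0$ we may take $x_n=x_n^1$. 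Hence $\widetilde\omega_n=E_\alpha(u_{\varepsilon_n}(\cdot-x_n))+o(1)$ in $\dot X^\alpha(\mathbb{R}_+^{N+1})$, and dividing by $t_n\to S^{\frac{N-2\alpha}{4\alpha}}$ yields $\omega_n(x,y)=\dfrac{1}{S^{\frac{N-2\alpha}{4\alpha}}}E_\alpha(u_{\varepsilon_n}(x-x_n))+o(1)$ in $\dot X^\alpha(\mathbb{R}_+^{N+1})$. Finally, if $x_n\to\bar x$ then $x_n^1\to\bar x$, and the dichotomy stated in Lemma \ref{l4.2} (namely $x_n^1\to\bar x^1$ forces $\sigma_n^1\to+\infty$ or $\sigma_n^1\to0$) transcribes, through $\varepsilon_n\sim\bar\varepsilon\,\sigma_n^1$, into $\varepsilon_n\to0$ or $\varepsilon_n\to+\infty$.

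The main obstacle I anticipate is the bookkeeping in the second step: one must apply the global compactness decomposition to the correct \emph{homogeneous} critical functional produced by the $t_n$-rescaling, correctly bound each profile's energy from below (by $0$ for $\omega^0$, by $\frac{\alpha}{N}S^{\frac{N}{2\alpha}}$ for the bubbles) so that the total budget $\frac{\alpha}{N}S^{\frac{N}{2\alpha}}$ leaves room for exactly one bubble and a trivial $\omega^0$, and then identify that single bubble with a standard profile $u_\varepsilon$ — the last point relying on the classification/uniqueness of minimizers for the fractional Sobolev inequality, which I would quote from the literature rather than reprove.
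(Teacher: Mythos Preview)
Your proposal is correct and follows the route the paper itself indicates: the paper does not give a detailed proof of this corollary, stating only that it ``can be obtained directly from Proposition~\ref{p2.2} and Lemma~\ref{l4.2}.'' Your argument is precisely a careful unpacking of that sentence --- normalize so that $t_n\omega_n$ minimizes $I^\infty$ on $N^\infty$, upgrade to a genuine Palais--Smale sequence via Ekeland, feed it into the global compactness decomposition, and use the energy budget $\frac{\alpha}{N}S^{N/2\alpha}$ together with the quantization from Proposition~\ref{p2.2} to isolate a single bubble, then read off the dichotomy on $\varepsilon_n$ from the dichotomy on $\sigma_n^1$ in Lemma~\ref{l4.2}.

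Two small technical remarks. First, Lemma~\ref{l4.2} is stated for $(PS)$ sequences of $I_\infty$ (with the weight $b(x)$), whereas you apply it to $I^\infty$; you should note that the same decomposition holds for $I^\infty$ (either as the special case $b\equiv 1$, or by the same Struwe-type argument from \cite{36} the paper invokes). Second, the Ekeland step is indeed needed in your framework, since Lemma~\ref{l4.2} requires a Palais--Smale sequence and a mere minimizing sequence on $N^\infty$ is not automatically one; the paper's one-line ``directly'' glosses over this, and your treatment is the honest way to bridge the gap.
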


Define the continuous map $\Phi:X^\alpha(\mathbb{R}_+^{N+1})\backslash G\rightarrow\mathbb{R}^N$ by
\begin{equation*}
\Phi(\omega):=\frac{\displaystyle\int_{\mathbb{R}^N} x|\omega(x,0)-\omega_\lambda^+(x,0)|^{2_\alpha^*}dx}{\displaystyle\int_{\mathbb{R}^N} |\omega(x,0)-\omega_\lambda^+(x,0)|^{2_\alpha^*}dx},
\end{equation*}
where $G=\left\{u\in X^\alpha(\mathbb{R}_+^{N+1});\ \displaystyle\int_{\mathbb{R}^N} |\omega(x,0)-\omega_\lambda^+(x,0)|^{2_\alpha^*}dx=0 \right\}$, and define another map $\widehat{\Phi}:X^\alpha(\mathbb{R}_+^{N+1})\backslash \{0\}\rightarrow\mathbb{R}^N$ by
\begin{equation*}
\widehat{\Phi}(\omega):=\frac{\displaystyle\int_{\mathbb{R}^N} x|\omega(x,0)|^{2_\alpha^*}dx}{\displaystyle\int_{\mathbb{R}^N} |\omega(x,0)|^{2_\alpha^*}dx}.
\end{equation*}

\begin{lemma}\label{l4.3}
\begin{description}
  \item[(i)] For each $0<\delta<r_0$, there exist $\lambda_\delta,\delta_0>0$ such that if $\omega\in N_\infty$ with $I_\infty(\omega)<\frac{\alpha}{N}S^\frac{N}{2\alpha}+\delta_0$ and $\lambda\in(0,\lambda_\delta)$, then $\Phi(\omega)\in M_\delta$.
  \item[(ii)] For each $0<\delta<r_0$, there exists $\overline{\delta}_0>0$ such that if $\omega\in N_\infty$ with $I_\infty(\omega)<\frac{\alpha}{N}S^\frac{N}{2\alpha}+\overline{\delta}_0$, then $\widehat{\Phi}(\omega)\in M_\delta$.
\end{description}
\end{lemma}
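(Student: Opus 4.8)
I would prove both parts by contradiction, in each case reducing the problem to the concentration behaviour of near‑minimal sequences on $N_\infty$. Start with (ii). If it fails for some $\delta\in(0,r_0)$, there is a sequence $\{\omega_n\}\subset N_\infty$ with $I_\infty(\omega_n)\to\frac{\alpha}{N}S^\frac{N}{2\alpha}$ (which by Lemma~\ref{l3.20} equals $\inf_{N_\infty}I_\infty$) and $\widehat{\Phi}(\omega_n)\notin M_\delta$ for all $n$. Since $\widehat{\Phi}(t\omega)=\widehat{\Phi}(|\omega|)=\widehat{\Phi}(\omega)$ and $|\omega_n|$ can be rescaled back onto $N_\infty$ without increasing $I_\infty$, we may assume $\omega_n\geq0$. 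On $N_\infty$ one has $I_\infty(\omega_n)=\frac{\alpha}{N}\|\omega_n\|_X^2$, so $\|\omega_n\|_X^2\to S^\frac{N}{2\alpha}$; since $\|\omega_n\|_X^2=\int_{\mathbb{R}^N}b(x)|\omega_n(x,0)|^{2_\alpha^*}dx\leq\int_{\mathbb{R}^N}|\omega_n(x,0)|^{2_\alpha^*}dx$, Proposition~\ref{p2.2} forces $\int_{\mathbb{R}^N}|\omega_n(x,0)|^{2_\alpha^*}dx\to S^\frac{N}{2\alpha}$, whence $\int_{\mathbb{R}^N}(1-b(x))|\omega_n(x,0)|^{2_\alpha^*}dx\to0$ with nonnegative integrand.

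Next I would normalize $\widetilde{\omega}_n:=\omega_n/|\omega_n(x,0)|_{2_\alpha^*}$; then $\widetilde{\omega}_n\geq0$, $|\widetilde{\omega}_n(x,0)|_{2_\alpha^*}=1$ and $\|\widetilde{\omega}_n\|_X^2\to S$, so Corollary~\ref{r4.1} yields $(x_n,\varepsilon_n)\in\mathbb{R}^N\times\mathbb{R}^+$ with $\widetilde{\omega}_n(x,0)=S^{-\frac{N-2\alpha}{4\alpha}}u_{\varepsilon_n}(x-x_n)+o(1)$ in $L^{2_\alpha^*}(\mathbb{R}^N)$. Substituting this profile into $\int_{\mathbb{R}^N}(1-b(x))|\widetilde{\omega}_n(x,0)|^{2_\alpha^*}dx\to0$ and rescaling by $x=x_n+\varepsilon_n y$, Fatou's lemma together with $b_\infty<1$ (Remark~\ref{r1.2}) rules out the noncompact alternatives: were $\varepsilon_n$ unbounded, or $|x_n|\to\infty$, then $|x_n+\varepsilon_n y|\to\infty$ for a.e.\ $y$ and the limit would be at least $(1-b_\infty)S^\frac{N}{2\alpha}>0$. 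Hence, along a subsequence, $x_n\to\overline{x}$, and since the unbounded case has been excluded Corollary~\ref{r4.1} gives $\varepsilon_n\to0$; passing to the limit in $\int_{\mathbb{R}^N}(1-b(x))|u_{\varepsilon_n}(x-x_n)|^{2_\alpha^*}dx\to0$ then yields $b(\overline{x})=1$, i.e.\ $\overline{x}\in M$. Because $\varepsilon_n\to0$ the probability measures $|\widetilde{\omega}_n(x,0)|^{2_\alpha^*}dx$ are tight and concentrate at $\overline{x}$, and the explicit decay $|u_\varepsilon(x)|^{2_\alpha^*}\sim|x|^{-2N}$ as $|x|\to\infty$ makes their first moments uniformly integrable; therefore $\widehat{\Phi}(\omega_n)=\widehat{\Phi}(\widetilde{\omega}_n)=x_n+o(1)\to\overline{x}\in M\subset M_\delta$, a contradiction.

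For (i) the scheme is the same, now with an auxiliary sequence $\lambda_n\to0$; the extra ingredient is Corollary~\ref{c2.1}(ii), giving $\|\omega_{\lambda_n}^+\|_X\to0$, hence $|\omega_{\lambda_n}^+(x,0)|_{2_\alpha^*}\to0$. Applying the profile decomposition of Lemma~\ref{l4.2} (which needs no sign restriction) to $\{\omega_n\}$ in place of Corollary~\ref{r4.1}, one obtains that $|\omega_n(x,0)|^{2_\alpha^*}dx$ concentrates at a single point $x_n$ with $b(x_n)\to1$, so $x_n\to\overline{x}\in M$; since $\omega_{\lambda_n}^+\to0$ in $L^{2_\alpha^*}$, the measures $|\omega_n(x,0)-\omega_{\lambda_n}^+(x,0)|^{2_\alpha^*}dx$ have the same concentration, in particular $\int_{\mathbb{R}^N}|\omega_n(x,0)-\omega_{\lambda_n}^+(x,0)|^{2_\alpha^*}dx\to S^\frac{N}{2\alpha}>0$, so $\omega_n\notin G$ for large $n$ and $\Phi(\omega_n)$ is defined, and $\Phi(\omega_n)=x_n+o(1)\to\overline{x}\in M_\delta$, again a contradiction. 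The main obstacle is exactly this concentration step: passing from ``energy near $\frac{\alpha}{N}S^\frac{N}{2\alpha}$ on $N_\infty$'' to ``one standard bubble concentrating at a point of $M$'' requires excluding the vanishing/spreading regime ($\varepsilon_n\to\infty$) and mass escaping to infinity ($|x_n|\to\infty$) — which is precisely where $b_\infty<1$ enters — and then converting the concentration of $|\omega_n(x,0)-\omega_{\lambda_n}^+(x,0)|^{2_\alpha^*}dx$ into convergence of the barycenter, where one must control the $|x|$-weighted tails by the decay of $u_\varepsilon$ and by the smallness of $\omega_{\lambda_n}^+$ in $L^{2_\alpha^*}$.
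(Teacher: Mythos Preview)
Your argument follows the same route as the paper: argue by contradiction, normalize a near-minimizing sequence on $N_\infty$, invoke the bubble description (Corollary~\ref{r4.1}) to extract a profile $u_{\varepsilon_n}(\cdot-x_n)$, use $b_\infty<1$ to exclude $|x_n|\to\infty$ (and, as you note more explicitly than the paper, $\varepsilon_n\to\infty$), deduce $x_n\to\overline{x}\in M$ with $\varepsilon_n\to0$, and then compute that the barycenter converges to $\overline{x}$. One small caveat: in part~(i) you invoke Lemma~\ref{l4.2}, which is formulated for $(PS)$ sequences of $I_\infty$ rather than mere minimizing sequences on $N_\infty$; the paper (and your own argument in~(ii)) instead applies Corollary~\ref{r4.1} to the normalized $|\omega_n|$, which is enough since both $\Phi$ and $\widehat\Phi$ depend only on $|\omega_n(x,0)|^{2_\alpha^*}$ up to the $o(1)$ correction coming from $\omega_{\lambda_n}^+\to0$ in $L^{2_\alpha^*}$.
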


\begin{proof}
$\textbf{(i)}$. Suppose on the contrary that there exists a sequence $\{\omega_n\}\subset N_\infty$ such that $I_\infty(\omega_n)<\frac{\alpha}{N}S^\frac{N}{2\alpha}+o_n(1)$,
$\lambda\rightarrow0^+$, and
\begin{equation}\label{e4.2}
\Phi(\omega_n)\not\in M_\delta ~\text{for~all}~n\in\mathbb{Z}_+.
\end{equation}
Since
 \begin{align}
\frac{\alpha}{N}S^\frac{N}{2\alpha}+1&>I_\infty(\omega_n)-\frac{1}{2_\alpha^*}I'_\infty(\omega_n)\omega_n\notag
\\&=\left(\frac{1}{2}-\frac{1}{2_\alpha^*}\right)\|\omega_n\|_X^2,\notag
\end{align}
we obtain that $\{\omega_n\}$ is bounded in $X^\alpha(\mathbb{R}_+^{N+1})$.

From (\ref{ss3.16}) and Lemma \ref{l3.20}, there is a sequence $\{t_n\}\subset\mathbb{R}^+$:
\begin{equation*}
t_n:=\left(\frac{\|\omega_n\|_X^2}{\displaystyle\int_{\mathbb{R}^N}  |\omega_n(x,0)|^{2_\alpha^*} dx}\right)^\frac{1}{2_\alpha^*-2}
\end{equation*}
such that $\{t_n \omega_n\}\in N^\infty$ and $
\frac{\alpha}{N}S^\frac{N}{2\alpha}\leq I^\infty(t_n\omega_n)
\leq I_\infty(t_n\omega_n)\leq I_\infty(\omega_n)=\frac{\alpha}{N}S^\frac{N}{2\alpha}+o_n(1).$
Thus, we have $t_n=1+o_n(1)$  and
\begin{align}\label{e4.3}
\displaystyle\lim_{n\rightarrow\infty}I_\infty(\omega_n)&=\displaystyle\lim_{n\rightarrow\infty}\frac{\alpha}{N}\|\omega_n\|_X^2
\nonumber
\\&=\displaystyle\lim_{n\rightarrow\infty}\frac{\alpha}{N}\displaystyle\int_{\mathbb{R}^N}b(x)|\omega_n(x,0)|^{2_\alpha^*} dx
\nonumber\\&=\frac{\alpha}{N}S^\frac{N}{2\alpha}+o_n(1).
\end{align}

Set
\begin{equation*}
U_n=\frac{\omega_n}{\left(\displaystyle\int_{\mathbb{R}^N}|\omega_n(x,0)|^{2_\alpha^*} dx\right)^\frac{1}{2_\alpha^*}}.
\end{equation*}
Then $\displaystyle\int_{\mathbb{R}^N}|U_n(x,0)|^{2_\alpha^*} dx=1$. It follows from (\ref{e4.3}) that
$\displaystyle\lim_{n\rightarrow\infty}\|U_n\|_X^2=S.$
According to  Corollary \ref{r4.1},   there exists a sequence $\{(x_n,\varepsilon_n)\}\subset\mathbb{R}^N\times\mathbb{R}^+$ such that
\begin{equation}\label{e4.4}
U_n(x,y):=\frac{1}{S^\frac{N-2\alpha}{4\alpha}}E_\alpha(u_{\varepsilon_n}(x-x_n))+o_n(1).
\end{equation}
Moreover, if $\{x_n\}\rightarrow\overline{x}$, then $\varepsilon_n\rightarrow0$ or it is unbounded.


$\textbf{Case 1}$. Suppose that $\{x_n\}\rightarrow\infty$ as $n\rightarrow\infty$. Without loss of generality, we assume that $b(x_n)\rightarrow b_\infty$ as $n\rightarrow\infty$, where $b_\infty$ is defined in Remark \ref{r1.2}. From (\ref{e4.3}) and (\ref{e4.4}) we deduce that
\begin{align}\label{e4.5}
1&=\frac{\displaystyle\int_{\mathbb{R}^N}b(x)|\omega_n(x,0)|^{2_\alpha^*} dx}{\displaystyle\int_{\mathbb{R}^N}|\omega_n(x,0)|^{2_\alpha^*} dx}+o_n(1)\notag
\\&=\displaystyle\int_{\mathbb{R}^N}b(x)|U_n(x,0)|^{2_\alpha^*} dx+o_n(1)\notag
\\&=S^{-\frac{N}{2\alpha}}\displaystyle\int_{\mathbb{R}^N}b(x+x_n)|u_{\varepsilon_n}(x)|^{2_\alpha^*} dx+o_n(1)\notag
\\&\leq b_\infty.\notag
\end{align}
This contradicts the fact of $b_\infty<1$.

$\textbf{Case 2}$. Suppose that $\{x_n\}\rightarrow\overline{x}$ as $n\rightarrow\infty$.  Using Corollary \ref{r4.1}, we have $\varepsilon_n\rightarrow0$ as $n\rightarrow\infty$. It follows from (\ref{e4.3})-(\ref{e4.4}) that
\begin{align}
1&=\frac{\displaystyle\int_{\mathbb{R}^N}b(x)|\omega_n(x,0)|^{2_\alpha^*} dx}{\displaystyle\int_{\mathbb{R}^N}|\omega_n(x,0)|^{2_\alpha^*} dx}+o_n(1)\notag
\\&=\displaystyle\int_{\mathbb{R}^N}b(x)|U_n(x,0)|^{2_\alpha^*} dx+o_n(1)\notag
\\&=S^{-\frac{N}{2\alpha}}\displaystyle\int_{\mathbb{R}^N}b(\sqrt{\varepsilon_n}x+x_n)|u_1(x)|^{2_\alpha^*} dx+o_n(1)\notag
\\&=b(\overline{x}),
\end{align}
where $u_1(x)=u_\varepsilon(x)$ when $\varepsilon=1$. In view of (\ref{e4.5}) and $\overline{x}\in M$, we have
\begin{align*}
\Phi(\omega_n)&=\frac{\displaystyle\int_{\mathbb{R}^N} x|\omega_n(x,0)-\omega_\lambda^+(x,0)|^{2_\alpha^*}dx}{\displaystyle\int_{\mathbb{R}^N} |\omega_n(x,0)-\omega_\lambda^+(x,0)|^{2_\alpha^*}dx}\notag
\\&=\frac{\displaystyle\int_{\mathbb{R}^N} x|\omega_n(x,0)|^{2_\alpha^*}dx}{\displaystyle\int_{\mathbb{R}^N} |\omega_n(x,0)|^{2_\alpha^*}dx}+o_\lambda(1),~\text{as}~\lambda\rightarrow0\notag
\\
&=\frac{\displaystyle\int_{\mathbb{R}^N} (x_n+\sqrt{\varepsilon_n}x)|u_1(x)|^{2_\alpha^*}dx}{\displaystyle\int_{\mathbb{R}^N} |u_1(x)|^{2_\alpha^*}dx}+o_\lambda(1)\notag
\\&\rightarrow \overline{x}\in M,\ \text{as}~n\rightarrow\infty.\notag
\end{align*}

This yields a contradiction with (\ref{e4.2}). That is, Part $(i)$ holds.

Processing in an analogous manner, we can arrive at Part $(ii)$, so we omit it.
\end{proof}

\begin{lemma}\label{l4.4}
\begin{description}
  \item[(i)]There exists $\Lambda_\delta>0$ small enough such that if $\lambda\in(0,\Lambda_\delta)$ and $\omega\in N_\lambda^-$ with $I_\lambda(\omega)<\frac{\alpha}{N}S^\frac{N}{2\alpha}+\frac{\delta_0}{2}$ ($\delta_0$ is given in Lemma \ref{l4.3} $(i)$), then $\Phi(\omega)\in M_\delta$.
  \item[(ii)] There exists $\overline{\Lambda}_\delta>0$ small enough such that if $\lambda\in(0,\overline{\Lambda}_\delta)$ and $\omega\in N_\lambda^-$ with $I_\lambda(\omega)<\frac{\alpha}{N}S^\frac{N}{2\alpha}+\frac{\overline{\delta}_0}{2}$ ($\overline{\delta}_0$ is given in Lemma \ref{l4.3} $(ii)$), then $\widehat{\Phi}(\omega)\in M_\delta$.
\end{description}
\end{lemma}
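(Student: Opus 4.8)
The plan is to deduce both statements from the ``limit'' result already proved in Lemma~\ref{l4.3} by projecting elements of $N_\lambda^-$ onto the Nehari manifold $N_\infty$ of $I_\infty$. I would argue by contradiction for part (i): negating the claim with $\Lambda_\delta=1/n$ produces $\lambda_n\to0^+$ and $\omega_n\in N_{\lambda_n}^-$ with $I_{\lambda_n}(\omega_n)<\frac{\alpha}{N}S^\frac{N}{2\alpha}+\frac{\delta_0}{2}$ but $\Phi(\omega_n)\notin M_\delta$, where $(\lambda_\delta,\delta_0)$ is the pair supplied by Lemma~\ref{l4.3}(i). Part (ii) is handled the same way with $\widehat{\Phi}$, $\overline{\delta}_0$, $\overline{\Lambda}_\delta$, and is in fact easier for the reason explained at the end.

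The first step is uniform two-sided bounds $0<c_1\le\|\omega_n\|_X\le c_2$, with $c_1,c_2$ independent of $n$. The upper bound is the $(PS)$-type computation of Lemma~\ref{l3.1}: evaluating $I_{\lambda_n}(\omega_n)-\frac{1}{2_\alpha^*}I'_{\lambda_n}(\omega_n)\omega_n$ and absorbing the logarithmic term via (\ref{e2.6}) and Proposition~\ref{p2.1} gives $\left(\frac12-\frac1{2_\alpha^*}\right)\|\omega_n\|_X^2\le I_{\lambda_n}(\omega_n)+\lambda_nC\|\omega_n\|_X^2+C$, hence boundedness once $\lambda_n$ is small. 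For the lower bound I use the Nehari identity (\ref{e2.8}): $\|\omega_n\|_X^2=\lambda_n\int a|\omega_n|^2\ln|\omega_n|\,dx+\int b|\omega_n(x,0)|^{2_\alpha^*}dx\le\lambda_nC(\|\omega_n\|_X^2+\|\omega_n\|_X^4)+C\|\omega_n\|_X^{2_\alpha^*}$, which forces $C\|\omega_n\|_X^{2_\alpha^*-2}\ge\frac12$ for $\lambda_n$ small; the same identity then gives $\int b|\omega_n(x,0)|^{2_\alpha^*}dx=\|\omega_n\|_X^2-\lambda_n\int a|\omega_n|^2\ln|\omega_n|\,dx\ge c_1^2/2>0$ for $n$ large, using the uniform bound $\bigl|\int a|\omega_n|^2\ln|\omega_n|\,dx\bigr|\le C$ from (\ref{e2.6}).

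Next comes the projection. Since $t\mapsto I_\infty(t\omega_n)=\frac{t^2}{2}\|\omega_n\|_X^2-\frac{t^{2_\alpha^*}}{2_\alpha^*}\int b|\omega_n(x,0)|^{2_\alpha^*}dx$ has a unique positive maximizer, there is a unique $s_n>0$ with $s_n\omega_n\in N_\infty$, namely $s_n^{2_\alpha^*-2}=\|\omega_n\|_X^2/\int b|\omega_n(x,0)|^{2_\alpha^*}dx$. By the previous step $s_n^{2_\alpha^*-2}-1=\lambda_n\int a|\omega_n|^2\ln|\omega_n|\,dx\big/\int b|\omega_n(x,0)|^{2_\alpha^*}dx=O(\lambda_n)$, so $s_n\to1$; expanding $I_\infty(s_n\omega_n)-I_{\lambda_n}(\omega_n)$ and using $s_n\to1$ with the uniform bounds gives $I_\infty(s_n\omega_n)=I_{\lambda_n}(\omega_n)+O(\lambda_n)$. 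Hence for $n$ large $s_n\omega_n\in N_\infty$ with $I_\infty(s_n\omega_n)<\frac{\alpha}{N}S^\frac{N}{2\alpha}+\delta_0$ and $\lambda_n<\lambda_\delta$, so Lemma~\ref{l4.3}(i) yields $\Phi(s_n\omega_n)\in M_\delta$ (and in case (ii), Lemma~\ref{l4.3}(ii) yields $\widehat{\Phi}(s_n\omega_n)\in M_\delta$).

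Transferring this back from $s_n\omega_n$ to $\omega_n$ is the main obstacle. In case (ii) it is free, since $\widehat{\Phi}$ is invariant under multiplication by a positive scalar, so $\widehat{\Phi}(\omega_n)=\widehat{\Phi}(s_n\omega_n)\in M_\delta$ — indeed there one may skip the contradiction and argue directly. In case (i) the map $\Phi$ is scale-invariant only after subtracting $\omega_\lambda^+$, so $\Phi(\omega_n)$ need not equal $\Phi(s_n\omega_n)$; I expect this gap to be controlled by combining $s_n\to1$, the fact that $\|\omega_{\lambda_n}^+\|_X\to0$ (Corollary~\ref{c2.1}(ii)), the lower bound $|\omega_n(x,0)|_{2_\alpha^*}\ge c>0$, and the single-bubble concentration of $s_n\omega_n$: re-examining the proof of Lemma~\ref{l4.3}(i) (which uses the profile analysis behind Corollary~\ref{r4.1}/Lemma~\ref{l4.2} and the fact that $\delta_0$ may be taken below $\frac{\alpha}{N}S^\frac{N}{2\alpha}$, so no mass can split off) shows the normalized $s_n\omega_n$ concentrate at points $x_n\to\overline{x}\in M$; consequently the normalized measures $|\omega_n(x,0)-\omega_{\lambda_n}^+(x,0)|^{2_\alpha^*}dx$ concentrate at $x_n$ as well, whence $\Phi(\omega_n)=x_n+o_n(1)\to\overline{x}\in M\subset M_\delta$, the desired contradiction. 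The delicate bookkeeping is to keep every $O(\lambda_n)$ and $o_n(1)$ uniform over the $\omega_n$, which is exactly the role of the bounds $c_1\le\|\omega_n\|_X\le c_2$ and of estimate (\ref{e2.6}).
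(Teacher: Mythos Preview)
Your approach is essentially the same as the paper's: project $\omega\in N_\lambda^-$ onto $N_\infty$ via the scalar $t_\infty(\omega)=\bigl(\|\omega\|_X^2/\int b|\omega(x,0)|^{2_\alpha^*}dx\bigr)^{1/(2_\alpha^*-2)}$, establish uniform two-sided bounds on $\|\omega\|_X$ and on $\int b|\omega(x,0)|^{2_\alpha^*}dx$ exactly as you do, and use these together with (\ref{e2.6}) to compare $I_\infty(t_\infty(\omega)\omega)$ with $I_\lambda(\omega)$, obtaining $I_\infty(t_\infty(\omega)\omega)<\frac{\alpha}{N}S^{N/2\alpha}+\delta_0$ so that Lemma~\ref{l4.3} applies. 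The only framing difference is that the paper argues directly for a fixed small $\lambda$ (so $t_\infty(\omega)$ is merely bounded between two positive constants, and the energy comparison reads $I_\infty(t_\infty\omega)\le \max_{t\ge0}I_\lambda(t\omega)+\lambda C$), whereas you argue by contradiction with $\lambda_n\to0$ (so $s_n\to1$); both give the same conclusion. Your observation that in part (ii) the scale-invariance of $\widehat\Phi$ makes the transfer from $s_n\omega_n$ to $\omega_n$ automatic is correct, and for part (i) you are in fact more explicit than the paper about why $\Phi(t_\infty\omega)\in M_\delta$ entails $\Phi(\omega)\in M_\delta$: the paper's written proof stops after the energy estimate and leaves implicit the step (handled inside the proof of Lemma~\ref{l4.3}(i) via $\|\omega_\lambda^+\|_X\to0$) that you spell out.
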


\begin{proof}
We only prove Part $(i)$. The proof of Part $(ii)$ is closely similar.

For any $\omega\in N_\lambda^-$ with $I_\lambda(\omega)<\frac{\alpha}{N}S^\frac{N}{2\alpha}+\frac{\delta_0}{2}$, there is a unique number:
\begin{equation*}
t_\infty(\omega)=\left(\frac{\|\omega\|_X^2}{\displaystyle\int_{\mathbb{R}^N}b(x)|\omega(x,0)|^{2_\alpha^*} dx}\right)^\frac{N-2\alpha}{4\alpha}
\end{equation*}
such that $t_\infty(\omega)\omega\in N_\infty$. We now claim that there are some $C_1, C_2>0$ independent of $\omega$ such that
\begin{equation}\label{e4.6}
C_1\leq t_\infty(\omega)\leq C_2.
\end{equation}
In view of $I_\lambda(\omega)<\frac{\alpha}{N}S^\frac{N}{2\alpha}+\frac{\delta_0}{2}$ and $I'_\lambda(\omega)\omega=0$, as we discussed for (\ref{e3.1}), there holds
\begin{equation}\label{e4.7}
\|\omega\|_X\leq C,
\end{equation}
where $C$ is independent of $\omega$.

On the other hand, it follows from (\ref{e2.6}) and Proposition \ref{p2.1} that
  \begin{align}\label{e4.8}
\displaystyle\int_{\mathbb{R}^N}a(x)|\omega(x,0)|^2\ln|\omega(x,0)|dx&\leq C(\|\omega\|_X^2+|\omega(x,0)|_2^4)\leq C(\|\omega\|_X^2+\|\omega\|_X^4).
\end{align}
From (\ref{e2.8}), (\ref{e4.8}) and Proposition \ref{p2.1}, we deduce that
\begin{align}
\|\omega\|_X^2&=\lambda\displaystyle\int_{\mathbb{R}^N}a(x)|\omega(x,0)|^2\ln|\omega(x,0)|dx+\displaystyle\int_{\mathbb{R}^N}b(x)|\omega_n(x,0)|^{2_\alpha^*}dx\notag
\\&\leq\lambda C(\|\omega\|_X^2+\|\omega\|_X^4)+C\|\omega\|_X^{2_\alpha^*}.\notag
\end{align}
That is,
\begin{equation*}
 \|\omega\|_X^{2_\alpha^*}+\|\omega\|_X^4\geq (1-\lambda C)\|\omega\|_X^2\geq C\|\omega\|_X^2
 \end{equation*}
for small $\lambda>0$ and some $C>0$. Hence, we have
\begin{equation}\label{e4.9}
 \|\omega\|_X^2\geq C_3
 \end{equation}
for some $C_3>0$ independent of $\omega$.

Taking into account Proposition \ref{p2.2} with (\ref{e2.8}) and (\ref{e4.7})-(\ref{e4.9}), we have
\begin{equation}\label{e4.10}
\displaystyle\int_{\mathbb{R}^N}b(x)|\omega(x,0)|^{2_\alpha^*}dx\leq C\|\omega\|_X^{2_\alpha^*}\leq C_4
 \end{equation}
and
\begin{align}\label{e4.11}
\displaystyle\int_{\mathbb{R}^N}b(x)|\omega(x,0)|^{2_\alpha^*}dx&=\|\omega\|_X^2-\lambda\displaystyle\int_{\mathbb{R}^N}a(x)|\omega(x,0)|^2\ln|\omega(x,0)|dx\notag
\\&\geq \|\omega\|_X^2-\lambda C(\|\omega\|_X^2+\|\omega\|_X^4)\notag
\\&\geq \|\omega\|_X^2(1-\lambda C)\notag
\\&\geq C_5
\end{align}
for small $\lambda>0$ and some positive constants $C_4, C_5>0$.
Clearly, by combining (\ref{e4.7}) and (\ref{e4.9})- (\ref{e4.11}), we see that (\ref{e4.6}) holds.

In view of (\ref{e4.6})-(\ref{e4.8}) and Proposition \ref{p2.1}, we obtain
\begin{align}
I_\infty(t_\infty(\omega)\omega)&=I_\lambda(t_\infty(\omega)\omega)+\frac{\lambda}{2}\displaystyle\int_{\mathbb{R}^N}a(x)|t_\infty(\omega)\omega(x,0)|^2\ln|t_\infty(\omega)\omega(x,0)|dx\notag
\\&~~~~-\frac{\lambda}{4}\displaystyle\int_{\mathbb{R}^N}a(x)|t_\infty(\omega)\omega(x,0)|^2dx\notag
\\&\leq\displaystyle\max_{t\geq0}I_\lambda(t\omega)+\lambda C\notag
\\&\leq\frac{\alpha}{N}S^\frac{N}{2\alpha}+\frac{\delta_0}{2}+\lambda C\notag
\\&<\frac{\alpha}{N}S^\frac{N}{2\alpha}+\delta_0\notag
\end{align}
for small $\lambda>0$.
\end{proof}

Set
\begin{displaymath}
c_\lambda:=\left\{\begin{array}{ll}
\frac{\alpha}{N}S^\frac{N}{2\alpha}-\sigma(\varepsilon_0),~ &\text{if}~a(x)\geq0,\\
\alpha_\lambda^++\frac{\alpha}{N}S^\frac{N}{2\alpha}-\overline{\sigma}(\overline{\varepsilon}_0),~ &\text{if}~a(x)~\text{is~sign-changing},
\end{array}\right.
\end{displaymath}
 and
\begin{equation*}
N_\lambda^-(c_\lambda):=\{\omega\in N_\lambda^-;I_\lambda(\omega)\leq c_\lambda\},
\end{equation*}
where $\sigma(\varepsilon_0)$ and $\overline{\sigma}(\overline{\varepsilon}_0)$ are defined in Lemma \ref{l3.3}.

Denote by $I_{N_\lambda^-}$ the restriction of $I_\lambda$ on $N_\lambda^-$. Then we have

\begin{lemma}\label{l4.5}
$I_{N_\lambda^-}$ satisfies the $(PS)$-condition on $N_\lambda^-(c_\lambda)$.
\end{lemma}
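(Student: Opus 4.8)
The plan is to show that every Palais--Smale sequence for the restricted functional $I_{N_\lambda^-}$ lying in the sublevel set $N_\lambda^-(c_\lambda)$ is in fact a genuine (unconstrained) Palais--Smale sequence for $I_\lambda$ on $X^\alpha(\mathbb{R}_+^{N+1})$ at an energy level strictly below the compactness threshold, so that Lemma \ref{l3.1} applies and yields strong convergence. Accordingly, I would start with $\{\omega_n\}\subset N_\lambda^-(c_\lambda)$ such that $I_\lambda(\omega_n)\to c$ for some $c\leq c_\lambda$ and the norm of the derivative of $I_\lambda$ restricted to $N_\lambda^-$ vanishes at $\omega_n$ as $n\to\infty$. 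Exactly as in the derivation of (\ref{e3.1}) (using (\ref{e2.6}), the inequality $\ln t\leq t$ and Propositions \ref{p2.1}--\ref{p2.2}), the bound $I_\lambda(\omega_n)\leq c_\lambda$ combined with $I'_\lambda(\omega_n)\omega_n=0$ gives $\|\omega_n\|_X\leq C$, so, up to a subsequence, $\omega_n\rightharpoonup\omega_0$ in $X^\alpha(\mathbb{R}_+^{N+1})$.

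Since $N_\lambda^0=\emptyset$ by Lemma \ref{l2.3}, $N_\lambda^-$ is a smooth constraint, and the Lagrange multiplier rule furnishes $\mu_n\in\mathbb{R}$ with
\begin{equation*}
I'_\lambda(\omega_n)-\mu_n\Psi'_\lambda(\omega_n)\longrightarrow 0 \quad\text{in } \big(X^\alpha(\mathbb{R}_+^{N+1})\big)^*,
\end{equation*}
where $\Psi_\lambda$ is the functional in (\ref{ee2.9}). Testing this relation against $\omega_n$ and using $I'_\lambda(\omega_n)\omega_n=0$ together with $\Psi'_\lambda(\omega_n)\omega_n=\phi''_{\omega_n}(1)$, I obtain $\mu_n\,\phi''_{\omega_n}(1)\to 0$. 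Hence, to conclude $\mu_n\to 0$ it suffices to show that $\phi''_{\omega_n}(1)$ stays bounded away from zero.

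This last point is the heart of the argument, and the step I expect to be the main obstacle. Arguing by contradiction, suppose $\phi''_{\omega_n}(1)\to 0$ along a subsequence. Since $\omega_n\in N_\lambda$, one has $\phi''_{\omega_n}(1)=-\lambda\int_{\mathbb{R}^N}a(x)|\omega_n(x,0)|^2dx-(2_\alpha^*-2)\int_{\mathbb{R}^N}b(x)|\omega_n(x,0)|^{2_\alpha^*}dx$, so that $\lambda\int_{\mathbb{R}^N}a(x)|\omega_n(x,0)|^2dx+(2_\alpha^*-2)\int_{\mathbb{R}^N}b(x)|\omega_n(x,0)|^{2_\alpha^*}dx=o_n(1)$. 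Combining this with the Nehari identity (\ref{e2.8}), invoking Lemma \ref{l2.0} to dichotomize between $\|\omega_n\|_X\leq 1$ and $\int_{\mathbb{R}^N}a(x)|\omega_n(x,0)|^2\ln|\omega_n(x,0)|dx\leq C\|\omega_n\|_X^2$, and then inserting these bounds into (\ref{e2.8}) via (\ref{e2.6}), $\ln t\leq t$ and Proposition \ref{p2.1}, I would reproduce the computation of Lemma \ref{l2.3} (now carrying $o_n(1)$ error terms) to deduce $\|\omega_n\|_X\to 0$ for $\lambda$ small. This contradicts the uniform lower bound $\|\omega_n\|_X\geq C>0$ valid on $N_\lambda$ for small $\lambda$, as in the derivation of (\ref{e4.9}). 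Therefore $\phi''_{\omega_n}(1)\leq-\delta<0$ for all $n$, whence $\mu_n\to 0$; since $\{\Psi'_\lambda(\omega_n)\}$ is bounded in the dual space, it follows that $I'_\lambda(\omega_n)\to 0$, i.e. $\{\omega_n\}$ is a $(PS)_c$ sequence for $I_\lambda$ on $X^\alpha(\mathbb{R}_+^{N+1})$.

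Finally, by the choice of $c_\lambda$ we have $c\leq c_\lambda<\frac{\alpha}{N}S^{\frac{N}{2\alpha}}$ when $a(x)\geq 0$, and $c\leq c_\lambda<\alpha_\lambda^++\frac{\alpha}{N}S^{\frac{N}{2\alpha}}$ when $a(x)$ is sign-changing; in either case Lemma \ref{l3.1} (part (ii), respectively part (i)) gives $\omega_n\to\omega_0$ strongly in $X^\alpha(\mathbb{R}_+^{N+1})$. Passing to the limit in $\Psi_\lambda(\omega_n)=0$, with the logarithmic term converging under strong convergence as in (\ref{e2.31}), shows $\omega_0\in N_\lambda\cup\{0\}$, while the lower bound $\|\omega_0\|_X\geq C>0$ rules out $\omega_0=0$ and $\phi''_{\omega_0}(1)=\lim_n\phi''_{\omega_n}(1)\leq-\delta<0$ forces $\omega_0\in N_\lambda^-$. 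Since $I_\lambda(\omega_0)=\lim_n I_\lambda(\omega_n)=c\leq c_\lambda$, we get $\omega_0\in N_\lambda^-(c_\lambda)$, which is precisely the $(PS)$-condition for $I_{N_\lambda^-}$ on $N_\lambda^-(c_\lambda)$.
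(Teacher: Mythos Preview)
Your argument is correct and follows essentially the same route as the paper: use the Lagrange multiplier rule for the constraint $\Psi_\lambda=0$, test against $\omega_n$ to obtain $\mu_n\,\Psi'_\lambda(\omega_n)\omega_n\to 0$, rule out $\Psi'_\lambda(\omega_n)\omega_n\to 0$ by reproducing the computation of Lemma~\ref{l2.3} with $o_n(1)$ errors, conclude $\mu_n\to 0$ and hence $I'_\lambda(\omega_n)\to 0$, and then invoke Lemma~\ref{l3.1}. Your write-up is in fact slightly more complete than the paper's (you make explicit the boundedness of $\{\omega_n\}$ and of $\{\Psi'_\lambda(\omega_n)\}$, and you check that the strong limit remains in $N_\lambda^-(c_\lambda)$), but the strategy is identical.
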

\begin{proof}
Let $\{\omega_n\}\subset N_\lambda^-(c_\lambda)$ be a $(PS)$ sequence.
Then there exists a sequence $\{\theta_n\}\subset\mathbb{R}$ such that
\begin{equation*}
 I'_\lambda(\omega_n)=\theta_n\Psi'_\lambda(\omega_n)+o_n(1),
\end{equation*}
where $\Psi_\lambda$ is defined in (\ref{ee2.9}).
Since $\omega_n\in N_\lambda^-$, we have $\Psi'_\lambda(\omega_n)\omega_n<0$ and there exists a subsequence (still denoted by $\{\omega_n\}$) such that
$\Psi'_\lambda(\omega_n)\omega_n\rightarrow l\leq0,~n\rightarrow\infty.$

If $l=0$, then
\begin{equation*}
\|\omega_n\|_X^2-\lambda\displaystyle\int_{\mathbb{R}^N}a(x)|\omega_n(x,0)|^2\ln|\omega_n(x,0)|dx-\displaystyle\int_{\mathbb{R}^N}b(x)|\omega_n(x,0)|^{2_\alpha^*}dx=o_n(1)
\end{equation*}
and
\begin{align}
 &\|\omega_n\|^2_X-\lambda\displaystyle\int_{\mathbb{R}^N}a(x)|\omega_n(x,0)|^2\ln|\omega_n(x,0)|dx\notag
-\lambda\displaystyle\int_{\mathbb{R}^N}a(x)|\omega_n(x,0)|^2dx \\& -(2_\alpha^*-1)\displaystyle\int_{\mathbb{R}^N}b(x)|\omega_n(x,0)|^{2_\alpha^*}dx=o_n(1).\notag
\end{align}
The rest is similar to the proof of Lemma \ref{l2.3},
we can obtain a contradiction. Hence, $l<0$. Due to $I'_\lambda(\omega_n)\omega_n=0$, we conclude that $\{\theta_n\}\rightarrow0$ as $n\rightarrow\infty$. Consequently, we obtain
$I'_\lambda(\omega_n)\rightarrow0,~n\rightarrow\infty.$ By virtue of Lemma \ref{l3.1}, we arrive at the desired result.
\end{proof}

\begin{proof}[Proof of Theorem \ref{t1.3}]
Let $\delta,\Lambda_\delta>0$ be the same as given in Lemmas \ref{l4.3} $(i)$ and \ref{l4.4} $(i)$ respectively. Firstly, we show that  $I_\lambda$ has at least $cat_{M_\delta}(M)$ critical points in $N_\lambda^-(c_\lambda)$ for $\lambda\in(0,\Lambda_\delta)$. For $z\in M$, by Lemmas \ref{l3.3} $(ii)$ and  \ref{l3.4}, we define
 \begin{equation*}
F(z)=\omega_\lambda^++t_{\varepsilon,z}^-v_{\varepsilon,z},
\end{equation*}
which belongs to $ N_\lambda^-(c_\lambda)$. Note that  $\Phi(N_\lambda^-(c_\lambda))\subset M_\delta$ for $\lambda<\Lambda_\delta$ by Lemma \ref{l4.4} $(i)$.

Define $\xi:[0,1]\times M\rightarrow M_\delta$ by
\begin{equation*}
\xi(\theta,z)=\Phi\left(\omega_\lambda^++t_{(1-\theta)\varepsilon,z}^-v_{(1-\theta)\varepsilon,z}\right)\in N_\lambda^-(c_\lambda).
\end{equation*}
By a straightforward calculation we have $\xi(0,z)=\Phi\circ F(z)$ and $\displaystyle\lim_{\theta\rightarrow1^-}\xi(\theta,z)=z$.
Hence, $\Phi\circ F$ is homotopic to the inclusion $j:M\rightarrow M_\delta$. Combining Lemma \ref{l4.5} with Propositions \ref{p4.1} and \ref{p4.2}, we obtain that $I_{N_\lambda^-(c_\lambda)}$ has at least $cat_{M_\delta}(M)$ critical points in $N_\lambda^-(c_\lambda)$. In addition, from Lemma \ref{l2.2}, $I_\lambda$ has at least $cat_{M_\delta}(M)$ critical points  in $N_\lambda^-(c_\lambda)$.  According to the fact $N_\lambda^+\cap N_\lambda^-=\emptyset$ and Theorem \ref{t1.1},  $I_\lambda$ also has at least $cat_{M_\delta}(M)+1$ critical points.

To show that problem (\ref{e1.1}) admits at least $cat_{M_\delta}(M)+1$ positive solutions, we set
 \begin{align}
I^+_\lambda(\omega)=& \frac{1}{2}\|\omega\|_X^2-\frac{\lambda}{2}\displaystyle\int_{\mathbb{R}^N}a(x)|\omega^+(x,0)|^2\ln|\omega^+(x,0)|dx\notag
\\&+\frac{\lambda}{4}\displaystyle\int_{\mathbb{R}^N}a(x)|\omega^+(x,0)|^2dx-\frac{1}{2_\alpha^*}\displaystyle\int_{\mathbb{R}^N}b(x)|\omega^+(x,0)|^{2_\alpha^*}dx,\notag
\end{align}
where $\omega^+(x,y):=\max\{\omega(x,y),0\}$. Processing in an analogous manner, we can prove that $I^+_\lambda$ has  at least $cat_{M_\delta}(M)+1$ critical points. Suppose that $\omega$ is one of the critical pints of $I^+_\lambda$. Note that
 \begin{equation*}
(I^+_\lambda)'(\omega)\omega^-(x,y)=\|\omega^-\|_X^2=0,
\end{equation*}
 with $\omega^-(x,y)=\min\{\omega(x,y),0\}$. We have $\omega\geq0$.
 By virtue of the maximum Principle for the fractional elliptic equations \cite{13}, we obtain that the problem (\ref{e2.2}) admits at least $cat_{M_\delta}(M)+1$ positive solutions.
\end{proof}

\begin{proof}[Proof of Theorem \ref{t1.4}]
Let $\delta,\overline{\Lambda}_\delta>0$ be the same as given  Lemmas  \ref{l4.3} $(ii)$ and  \ref{l4.4} $(ii)$ respectively. Firstly, we show that  $I_\lambda$ has at least $cat_{M_\delta}(M)$ critical points in $N_\lambda^-(c_\lambda)$ for $\lambda\in(0,\overline{\Lambda}_\delta)$. For $z\in M$, by Lemma \ref{l3.3} $(i)$, we define
 \begin{equation*}
F(z)=t_{\varepsilon,z}^-v_{\varepsilon,z},
\end{equation*}which belongs to $N_\lambda^-(c_\lambda)$.
It follows from Lemma \ref{l4.4} that $\widehat{\Phi}(N_\lambda^-(c_\lambda))\subset M_\delta$ for $\lambda<\overline{\Lambda}_\delta$.

Define $\xi:[0,1]\times M\rightarrow M_\delta$ by
\begin{equation*}
\xi(\theta,z)=\widehat{\Phi}\left(t_{(1-\theta)\varepsilon,z}^-v_{(1-\theta)\varepsilon,z}\right)\in N_\lambda^-(c_\lambda).
\end{equation*}
Then, $\xi(0,z)=\widehat{\Phi}\circ F(z)$ and $\displaystyle\lim_{\theta\rightarrow1^-}\xi(\theta,z)=z$.
Hence, $\widehat{\Phi}\circ F$ is homotopic to the inclusion $j:M\rightarrow M_\delta$. Combining  Lemma \ref{l4.5} with Propositions \ref{p4.1} and \ref{p4.2}, we obtain that $I_{N_\lambda^-(c_\lambda)}$ has at least $cat_{M_\delta}(M)$ critical points in $N_\lambda^-(c_\lambda)$. According to Lemma \ref{l2.2}, we know that $I_\lambda$ has at least $cat_{M_\delta}(M)$ critical points  in $N_\lambda^-(c_\lambda)$.

Similar to the proof of Theorem \ref{t1.3}, we can show that the problem (\ref{e2.2}) admits at least $cat_{M_\delta}(M)$ positive solutions.
\end{proof}

\bibliographystyle{amsplain}

\end{document}